\numberwithin{equation}{section}
\newcounter{mtheorem}
\newtheorem{thm}{Theorem}[section]
\newtheorem{prop}[thm]{Proposition}
\newtheorem{rmk}[thm]{Remark}
\def\Reals{\mathbb{R}}
\def\R2d{{\Reals}^{2d}}
\def\calA{{\mathcal A}}
\def\calH{{\mathcal H}}
\def\calL{{\mathcal L}}
\def\calM{{\mathcal M}}
\def\calPac_1on{{\mathcal M}on}
\def\calN{{\mathcal N}}
\def\calP{{\mathcal P}}
\def\calU{{\mathcal U}}
\def\calZ{{\mathcal Z}}
\def\calPac{{\mathcal P}^{ac}}
\def\x{{\bf x}}
\def\y{{\bf y}}
\def\id{{\bf id}}
\def\div{{\rm div}}
\def\dom{{\rm dom}}
\def\endproof{\ \hfill $\square$ \bigskip} \def\proof{\noindent {\bf Proof:}}
\def\proof#1{\noindent {\bf Proof{#1}:}}
\def\vrho{\varrho}
\def\calPac{{\mathcal P}^{ac}}
\def\Arg{{\it Argmin} \Pi}
\def\x{{\bf x}}
\def\y{{\bf y}}
\def\id{{\bf id}}
\def\div{{\rm div}}
\def\dom{{\rm dom}} 
\def\spt{{\rm spt}}
\title{On a model of forced axisymmetric flows.}
\author{Mike Cullen\thanks{Met Office, Exeter, EX1 3PB, UK Exeter({\tt mike.cullen@metoffice.gov.uk}). MC's contribution is Crown Copyright.}
        \and Marc Sedjro\thanks{Rwth Aachen University, Aachen, Germany({\tt sedjro@instmath.rwth-aachen.de}).} The material in this paper was part of MS's dissertation.}
\begin{document}

\maketitle

\begin{abstract}
In this work, we consider a model of forced axisymmetric flows which is derived from the inviscid Boussinesq equations. What makes these equations unusual is the boundary conditions they are expected to satisfy and the fact that the boundary is part of the unknowns.  We show that these flows give rise to an unusual Monge-Ampere equations for which we prove the existence and the uniqueness of a variational solution. We take advantage of these Monge-Ampere equations and construct a solution to the model.
\end{abstract}

\begin{keywords} 
 Mass transport, Duality, Wasserstein metric, Boussinesq.

\end{keywords}

\begin{AMS}
35Q35, 35R35, 49K20.
\end{AMS}

\pagestyle{myheadings}
\thispagestyle{plain}
\markboth{MIKE CULLEN AND MARC SEDJRO}{ON A MODEL OF FORCED AXISYMMETRIC FLOWS.}


\section{Introduction}
\label{}
In this paper, we consider a model of forced axisymmetric flows in the absence of viscosity. This model was introduced by \cite{Eliassen} to study  the structure of tropical cyclones. The solution can be regarded as an axisymmetric vortex which is stable to axisymmetric perturbations and which evolves slowly in time under the action of forcing, \cite{EliassenK} \cite{Fjortoft}. Shutts {\em et al.} \cite{Shutts} developed a discrete procedure for solving this problem within a rigid axisymmetric boundary. We extend this procedure to the continuous case by using mass transportation methods, as reviewed by \cite{Cullen2006}. We also propose and solve a novel free boundary version which is more physically appropriate, as it allows the vortex to evolve within an ambient fluid at rest. Mass transportation methods have been applied successfully to a free boundary problem by \cite{CullenGangbo}, but our problem differs in important respects from theirs. The time dependent domain where the fluid evolves, in the cylindrical coordinates $(\lambda,r,z)$,  is of the form 
\begin{equation}\label{eq: boundary cond 0}
\Gamma_{\zeta_t}=\left\lbrace (\lambda, r,z) :  0\leq \lambda\leq 2\pi, \;\; 0\leq z\leq H,\;\;  r_0\leq r\leq \zeta(t,\lambda,z) \right\rbrace,
\end{equation}

\noindent where the boundary $r=\zeta_t$ is a material surface and $r_0, \; H$ are positive real numbers. We have used the notation $S_t= S(t,\cdot,\cdot)$. The temperature $\theta$ within the domain of the vortex (where the PDEs are considered)  is higher  than the temperature  in the ambient fluid  which is maintained at a constant temperature $\theta_0$  in a rotating framework where the Coriolis coefficient is  $\Omega >0$. We denote by ${\bf u}=(u,v,w)$ the velocity of the fluid in cylindrical coordinates. The material derivative associated to this velocity in cylindrical coordinates takes the form $\frac{D}{Dt}:=\frac{\partial}{\partial t}+\frac{u}{r}\frac{\partial}{\partial \lambda} +v\frac{\partial}{\partial t}+w\frac{\partial}{\partial z}$. The pressure inside the vortex is denoted by $\varphi.$ \\

We  follow procedures proposed by  Craig \cite{Craig} to solve the time evolution of the vortex under axisymmetric forcing.
The unknowns of the problem are ${\bf u}=(u,v,w), \theta, \varphi, \zeta$. We start by writing the equations for forced almost axisymmetric flows, as derived by Craig \cite{Craig}: 
\begin{equation}{\label{eq:craig}}
\begin{cases}
\frac{{\rm D} u}{{\rm D} t}+\frac{uv}{r}+2\Omega v +\frac{1}{r}\frac{\partial\varphi}{\partial \lambda}&=\frac{1}{r}F(t,\lambda,r,z),\\
\frac{{\rm D} \theta}{{\rm D} t}& =S(t,\lambda,r,z),\\
\frac{u^2}{r}+2\Omega u &=\frac{\partial\varphi}{\partial r}, \\
\frac{\partial\varphi}{\partial z}-g\frac{\theta}{\theta_0} &=0, \\
\frac{1}{r}\frac{\partial}{\partial \lambda}(u)+ \frac{1}{r}\frac{\partial}{\partial r}(rv)+\frac{\partial w}{\partial z} &=0.
\end{cases}
\end{equation}
We consider these equations with Neumann conditions imposed  on the rigid boundary  while a kinematic boundary condition is imposed on the free boundary and the pressure is required  at each time $t$ to vanish at $\{r=\zeta^t\}\cap \{z=H\}  $. In Craig \cite{Craig}, the free boundary condition was replaced by a decay condition as $r\rightarrow\infty$. $F(t,\lambda,r,z)$ and $S(t,\lambda,r,z)$ are prescribed forcing terms of the system.

Despite the fact that  the equations for almost axisymmetric flows (when $F=S=0$ in (\ref{eq:craig})) are derived as   approximations to the inviscid Boussinesq equations, surprisingly, they have retained  quite the same level of complexity and formally are known to have kept the infinite dimensional Hamiltonian structure already present in the inviscid  Boussinesq equations.  From a physical perspective, we  are interested in solutions that are stable in the sense that they correspond to a minimum energy state with respect to parcel displacements that preserve the angular momentum and the potential temperature (see \cite{EliassenK}). As suggested in \cite{Sedjro}, one of the main obstructions we run into while implementing  the solution procedure we propose for solving the almost axisymmetric flow equations comes from our inability to find adequate  regularity properties for the pressure  $\varphi$ with respect to $\lambda$ as the system evolves in time. In this paper, we set aside this difficulty by considering  the equations for forced axisymmetric flows. The forcing terms can be considered as representing the effects of the non-axisymmetric parts of the real flow on the axisymmetric part. The solution, as we will show, shares the same stability property as the full  system of equations and thus sheds some light on the structure of almost axisymmetric flows.

\subsection{The Axisymmetric Model}
We assume that the  quantities and operators involved in (\ref{eq:craig}) do not depend on $\lambda$   (in particular, here ${D  \over Dt}= {\partial \over \partial t}+  v{\partial \over \partial r}+  w{\partial \over \partial z}$ ) and thus obtain the $2$-dimensional system of equations:

\begin{subnumcases}{\label{eq:non physical}}
\frac{{\rm D}  u}{{\rm D} t}+\frac{ u v}{r}+2\Omega  v &$=\frac{1}{r} F(t,r,z),\label{first1}$\\
\frac{{\rm D} \theta}{{\rm D} t} &$= S(t,r,z),\label{second1}$\\
\frac{ u^2}{r}+2\Omega  u &$=\frac{\partial\varphi}{\partial r},\label{third1}$ \\
\frac{\partial \varphi}{\partial z}-g\frac{\theta}{\theta_0} &=$0, \label{fifth1}$\\
 \frac{1}{r}\frac{\partial}{\partial r}(r v)+\frac{\partial  w}{\partial z} &=$0$.\label{fourth1}  
\end{subnumcases}

 The  above equations are to be solved in the moving  domain 
\begin{equation}\label{eq: boundary cond 1}
\Gamma_{\zeta_t}=\left\lbrace ( r,z) : \;\; 0\leq z\leq H,\;\;  r_0\leq r\leq \zeta(t,z) \right\rbrace,
\end{equation}
 where $\zeta$ is a free boundary. The conditions  on the boundary are given by 
\begin{equation}\label{eq: non physical free boundary}
\begin{cases}
 \langle(  v_t, w_t) , {\bf n_t}\rangle=0 \qquad \qquad \text{ on } \{r=r_0\}\cup\{z=0\}\cup \{z=H\},\\
 \frac{\partial \zeta_t}{\partial t} + w\frac{\partial \zeta_t}{\partial z} \;\;\;=  v  \qquad \qquad \text{on}\;\; \{ r=\zeta(t,z)\},
\end{cases}
\end{equation} 
along with
\begin{equation}\label{eq: bdary cond on pressure}
 \varphi(t,\zeta(t, H),H)=0.
\end{equation}

Here, ${\bf n_t}$  is the unit outward normal vector field at time $t$ of the boundary  $\partial\Gamma_{\varsigma_t}$. $ F(t,r,z)$ and $ S(t,r,z)$ are prescribed functions. The Hamiltonian relevant to the system  (\ref{eq:non physical}) is given by
\begin{equation}\label{eq: Hamiltonian}
 \int_{\Gamma_{\zeta}}\left( \frac{u^2}{2}-\frac{g\theta }{\theta_0}z\right)rdrdz.
\end{equation}

 In order to obtain stable solutions as  discussed above, we require the pressure to satisfy the following stability condition:

\begin{equation}\label{eq: stability condition}
\nabla_{r,z}\left( \varphi + \Omega^2\frac{r^2}{2} \right)\quad \text{  is    \textit{invertible}.}
\end{equation}

These equations are supplemented by the initial conditions
$$( u, v, w)_{|t=0}=( u_0, v_0, w_0), \; \theta_{|t=0}= \theta_0,\; \varphi_{|t=0}=\varphi_0, \; \zeta_{|t=0}=\zeta_0$$
that are required to satisfy (\ref{third1})-(\ref{fourth1}), the first equation in (\ref{eq: non physical free boundary}), (\ref{eq: bdary cond on pressure}) and (\ref{eq: stability condition}). 

\subsection{Continuity equation corresponding to the 2D Axisymmetric Flows with Forcing Terms}

\label{sec: Change of variables into the Dual space  and Justification}
Interestingly, given enough regularity, the equations of stable forced axisymmetric flows can be reformulated as a continuity equation for some family of probability measures $\left\lbrace\sigma_t\right\rbrace_{t\in[0,T]}$ ($T>0$) and a some  velocity field $V$ in a set of transformed variables:
 \begin{equation}\label{eq00: Continuity equation unphysical problem}
 \begin{cases}
  \frac{\partial\sigma}{\partial t}+ \div (\sigma V_t)=0 \qquad (0,T)\times \mathbb{R}^2,\\
 \sigma|_{t=0}=\sigma_0
 \end{cases}
 \end{equation}
  in the sense of distributions. Indeed, let $\varphi :(0,\infty)\times (0,H)\times (r_0, \infty)\longrightarrow \mathbb{R}$  be smooth and use the change of variable $ 2s= r_0^{-2}-r^{-2}$ to define 
\begin{equation}\label{eq:P in term of the pressure}
 P_t(s,z)=\varphi_t(r,z)+\frac{\Omega^2r^2}{2}.
\end{equation}
 We denote by $\Psi_t:=\Psi_t(\Upsilon,Z)$ the Legendre transform of $P_t$ for each $t$ fixed.
Let $\zeta$ be smooth such that $r\chi_{\Gamma_{\zeta_t}(r,z)}$ is a probability density function for each $t$ fixed. The change of variable $2s= r_0^{-2}-r^{-2}  $ induces   $\rho_t:[0,H]\longmapsto[0;1/(2r_0^2)) $ such that
\begin{equation}\label{eq: expressions of measure in physical space}
r\chi_{\Gamma_{\zeta_t}}(r,z)drdz= e(s)\chi_{D_{\rho_t}}(s,z)dsdz,
\end{equation}
with
$$ D_{\rho_t}=\left\lbrace (s,z): 0\leq s\leq \rho_t(z), z\in [0,H] \right\rbrace \qquad \text{ and } \qquad e(s)=r_0^4/(1-2sr_0^2)^2 \quad \text{for}\quad 0\leq 2r_0^2s < 1.$$

 Assume that $\varphi_t$ is such that $P_t$ is convex and $\varphi$ satisfies (\ref{eq: stability condition}) and define a family of probability measures $\left\lbrace\sigma_t\right\rbrace_{t\in[0,T]}$, absolutely continuous with respect to Lebesgue, by
 
  \begin{equation}\label{eq intro: Monge Ampere with parameter} 
 e(\partial_{\Upsilon} \Psi)\det(\nabla_{\Upsilon,Z}^{2}\Psi)= \sigma, \qquad
 \nabla_{\Upsilon,Z}\Psi( spt(\sigma ))= D_{\rho}. 
  \end{equation} 
 
 If equations  (\ref{eq:non physical})-(\ref{eq: bdary cond on pressure}) admit a solution 
 $\zeta,\varphi$,  then $t\longmapsto\sigma_t$ is an absolutely continuous curve in the space of Borel probability measures equipped with the Wasserstein distance and satisfies (\ref{eq00: Continuity equation unphysical problem}) with
 \begin{equation}\label{eq1 unphysical1: velocity in dual space} 
  V_t= {\textstyle \left(\;2\sqrt{\Upsilon} F_t\left( \frac{r_0}{\sqrt{1-2r_0^2\frac{\partial \Psi}{\partial \Upsilon}}},\frac{\partial\Psi}{\partial Z} \right)  ,\; \frac{g}{\theta_0}  S_t\left( \frac{r_0}{\sqrt{1-2r_0^2\frac{\partial \Psi}{\partial \Upsilon}}},\frac{\partial\Psi}{\partial Z} \right)  \right). }
 \end{equation}
 Conversely,  assume that $(P,\Psi, \sigma)$ is a smooth solution of (\ref{eq00: Continuity equation unphysical problem}) and (\ref{eq1 unphysical1: velocity in dual space}) such that    $(P_t,\Psi_t)$ are Legendre transforms of each other  and $\nabla P_t$, $\nabla \Psi_t$ are inverse of each other for  each $t$ fixed. If, in addition, there exists $\rho$ smooth such that (\ref{eq intro: Monge Ampere with parameter}) holds  then we can construct a solution for the forced axisymmetric flows as shown in section \ref{sec: Derivation of the continuity equation}. We note that the equations (\ref{eq intro: Monge Ampere with parameter})  serve as a change of variable between (\ref{eq00: Continuity equation unphysical problem}), (\ref{eq1 unphysical1: velocity in dual space}) on the one hand, and (\ref{eq:non physical})-(\ref{eq: non physical free boundary})  on the other hand.\\
 
 It is important to emphasize that the solution we construct for (\ref{eq:Monge Ampere}) in this paper is not smooth enough  so as to make the transition from (\ref{eq00: Continuity equation unphysical problem}) and (\ref{eq1 unphysical1: velocity in dual space}) to (\ref{eq:non physical})-(\ref{eq: bdary cond on pressure}).

As shown in (\cite{Sedjro}), the Hamiltonian given  in (\ref{eq: Hamiltonian}) for  stable forced axisymmetric flows can be expressed, via  appropriate changes of variables, by  the following functional :
  
  \begin{equation}\label{eq intro:primal1} 
\mathcal{P}(\mathbb{R}^2) \ni \sigma\longmapsto H_{\ast }(\sigma):=
  \inf_{(\rho, \gamma)\in\mathfrak{L}_{\sigma}}\int_{D_\rho \times \Delta} \left( -\langle p, q \rangle +{\Upsilon \over 2 r_0^2}-\Omega \sqrt \Upsilon + {r_0^2 \Omega^2 \over 2(1-2 r_0^2 s)}\right)  \gamma(dp, dq)  
  \end{equation}
with $p=(s,z)$, $q=(\Upsilon,Z)$ and $$\mathfrak{L}_{\sigma}=\Bigl\{(\rho,\gamma): \rho\in\calH_0  , \int_{D_{\rho}}e(s)dsdz=1, \gamma \in \Gamma(e(s)\chi_{D_{\rho}}\calL^2, \sigma)\Bigr\}.$$

   Here, $\calH_0$  consists of all Borel functions  $\rho : [0,H]\longmapsto [0,1/(2r_0^2)$. The minimization problem in (\ref{eq intro:primal1}) has a dual formulation

  \begin{equation}\label{eq intro:dual1} 
    H_{\ast}(\sigma)=  \sup \left(  \int_{\mathbb{R}^2} \Bigl({\Upsilon \over 2r_0^2} -\Omega \sqrt \Upsilon- \Psi\Bigr) \sigma(dq)+\inf_{\rho \in \calH_0} \int_0^H \int_0^{\rho(z)} \Bigl( \dfrac{\Omega^2 r_{0}^{2}}{2(1-2s r_{0}^{2})}-P(s,z) \Bigr) e(s)ds\right). 
  \end{equation}

  The supremum in (\ref{eq intro:dual1}) is taken over the set 
  \begin{equation}\label{eq intro:for calU}
   \mathcal{U}:=\Bigl\{ (\Psi, P)\in C(\Reals^2_+)\times C(\bar {\mathcal{D}}): P(p)+\Psi(q) \geq  \langle p, q \rangle \text{ for all } (p,q)\in \mathcal{D}\times\mathbb{R}_+^2 \Bigr\},
  \end{equation}
  
where the set $\mathcal{D}$  will be given in (\ref{eq: define D}). It turns out that if $ \rho^{\sigma}$ is a minimizer in (\ref{eq intro:primal1}) and $ (P^{\sigma},\Psi^{\sigma})$, Legendre transforms of each other, is a maximizer of \eqref{eq intro:dual1}  then $( \rho^{\sigma},P^{\sigma}, \Psi^{\sigma})$ solves (\ref{eq:Monge Ampere}) and moreover, we have  $\nabla \Psi^{\sigma}\circ\nabla P^{\sigma}= \id \quad  e(s)\chi_{D_{\rho_0}}(s,z)\calL^2-\text{almost everywhere}$ and $\nabla P^{\sigma}\circ\nabla \Psi^{\sigma}= \id \quad \sigma-\text{almost everywhere} $.

\subsection{ Challenges and Plan of the paper}

We show that (\ref{eq:Monge Ampere}) admits a unique variational solution $(P^{\sigma}, \Psi^{\sigma},\rho^{\sigma})$ in the sense of (\ref{eq intro:primal1}) and  (\ref{eq intro: Monge Ampere with parameter})
  which we exploit to get that the operator $\sigma\mapsto V_t[\sigma]$ is continuous, leading to the existence of a global solution in (\ref{eq00: Continuity equation unphysical problem}). The difficulty in obtaining the existence of a minimizer in (\ref{eq intro:primal1} ) lies in the fact that the set of functions $\left\lbrace\chi_{D_{\rho}} (s,z)\right\rbrace_{\rho\in\calH_0} $ is not closed in the $L^{\infty}$ weak* topology.  This  is an obstacle we bypass  by observing that
$$\bar I[\sigma](\rho^{\#})\leq \bar I[\sigma](\rho)$$
 if $\rho^{\#}$ is a monotone rearrangement of $\rho$ (see \cite{Sedjro}). The existence follows easily from the fact that the monotone  functions are precompact with respect to pointwise topology. But the uniqueness proved extremely challenging in the sense that we do not know any strict convexity property for the functional with respect to any interpolation  we could think of. In section \ref{sec:Duality Methods and Monge-Ampere Problem},  we resort to a duality argument and discover a twist condition for a certain functional  which ensures uniqueness in (\ref{eq intro: Monge Ampere with parameter}) and furthermore, we show that the boundary of the domain  $D_{\rho^{\sigma}}$ is a finite union of graphs of Lipschitz functions. Before that, we fix the notations and give some definitions in section \ref{sec: notation}. In section \ref{sec: Derivation of the continuity equation}, we derive the continuity equation corresponding to the forced axisymmetric flows. We state some stability results in section \ref{sec:Some stability results} which are used to obtain the continuity of the operator $\sigma\mapsto V_t[\sigma]$ and thus the existence of  a global solution in time  to the continuity equation 
 (\ref{eq00: Continuity equation unphysical problem})-(\ref{eq1 unphysical1: velocity in dual space}) in section \ref{sec:Continuity equation for the Toy Model}. We note that the construction of a global solution follows a scheme pioneered by Ambrosio-Gangbo \cite{AmbrosioGangbo}.

\section{Notation and Definitions}
\label{sec: notation}
 In this section we introduce some notations and recall some standard definitions. Let $d\in\mathbb{N}$.
\begin{itemize}
\item  For any real number $x$, $[x]$ denotes the integer part of $x$.
\item For $A\subset \mathbb{R}^d$, $\bar A$ is the closure of $A$.
\item If $f: A\subset \mathbb{R}^d \longmapsto \mathbb{R}$ is a Lipschitz continuous then $\displaystyle{Lip(f)\equiv\sup_{\substack{\x,\y\in A\\ \x\neq\y}}\frac{|f(\x)-f(\y)|}{|\x-\y|}}$ denotes the lipschitz constant of $f$.
\item Let $\mathbb{X}\subset \mathbb{R}^d  $ be a convex set. If $P: \mathbb{X} \longmapsto \mathbb{R}$ is a convex function and $p\in \mathbb{X}$, the subdifferential of P at $p$, denoted by $\partial_{.} P(p)$,  is defined as $\partial_{.} P(p)=\left\lbrace s\in \mathbb{R}^d: P(q)\geq P(p) + \langle s,q-p\rangle \;\forall q\in\mathbb{X} \right\rbrace $.
  
\item  $\mathcal{P}(\mathbb{R}^d)$ is the set of all Borel probability measures on $\mathbb{R}^d$ and   $\mathcal{P}_p(\mathbb{R}^d)$ ($1\leq p<\infty$) is the set of all probability measures with $p-$ finite moments.  If $R>0$ then  $\mathcal{P}_{[R]}$ will denote the subset of $\mathcal{P}(\mathbb{R}^2)$ consisting of borel  probability measures supported in $[0,R]^2$. For $\sigma\in\mathcal{P}(\mathbb{R}^2)$, $\spt(\sigma)$ will denote the support of $\sigma$.

  \item Given $\mu_0$ and $\mu_1 \in \mathcal{P}(\mathbb{R}^d)$, we denote by $\Gamma(\mu_0,\mu_1)$ the set of all Borel measures on $\mathbb{R}^d\times\mathbb{R}^d$  whose first and second marginal are respectively $\mu_0$ and $\mu_1$. We say that a Borel map $T$ \textit{pushes} $\mu_0$ \textit{ forward} to  $\mu_1$   and write $T_{\#}\mu_0=\mu_1$ if $\mu_1(A)=\mu_0(T^{-1}(A))$ for any $A\subset\mathbb{R}^d$ borel. \\
  If, in addition, $\mu_0$ $\mu_1$ are of $p-$ finite moments then the ($p$-th) Wasserstein distance between the Borel  measures $\mu_0$ and $\mu_1$  is defined by
   \begin{equation}\label{def of Wasserstein distance}
    W^{p}_{p}(\mu_0,\mu_1)=     \min\left\lbrace \int_{\mathbb{R}^d\times\mathbb{R}^d}|x-y|^pd\gamma: \gamma\in \Gamma(\mu_0,\mu_1)\right\rbrace. 
   \end{equation}
  The set of minimizers in (\ref{def of Wasserstein distance}) is denoted $\Gamma_0(\mu_0,\mu_1)$.

 \item Let $1\leq m\leq \infty$ and $a,b$ reals numbers such that $a<b$. A curve $ \mu :(a,b)\longrightarrow \mathcal{P}_p(\mathbb{R}^d)$ is said to belong to  $AC_m\left( (a,b); \mathcal{P}_{p}(\mathbb{R}^d) \right)$  if there exists $ {\bf m}\in L^m(a,b)$ such that 
 $$W_p(\mu_t,\mu_s)\leq \int_t^s {\bf m}(r)dr \qquad  \text{for all } a<t\leq s<b.$$
 
 Curves in  $AC_m\left( (a,b); \mathcal{P}_p(\mathbb{R}^d) \right)$  are said to be $m-$absolutely continuous.

\item If $ X=(X_1,X_2)$ is a vector field then $\frac{D^{X}}{Dt}:= \frac{\partial}{\partial t} + X_1\frac{\partial}{\partial r}+X_2\frac{\partial}{\partial z}$. We simply write $\frac{D}{Dt}$ whenever the velocity field is understood from the context.

  \item Throughout this manuscript, $R_0, r_0, H$ are positive constants and we set
  
  \begin{equation}\label{eq: define D} 
   \mathcal{D}:=[0,1/2r_0^2)\times [0,H].
  \end{equation}
  \item  $\Delta$ is a subset of $[0,R_0]^2$. We assume that  $\Delta$ is closed throughout the paper.
\end{itemize} 



\section{Derivation of the continuity equation}
\label{sec: Derivation of the continuity equation}
In this section, we discuss the equivalence of  Forced axisymmetric flows with a certain class of continuity equations when enough regularity is assumed.

We consider the following equations where the unknowns are $\textbf{u}=(v,w)$ and $\zeta$ :

\begin{equation}\label{eq1: conservation of mass physical} 
 \begin{cases}
  \frac{1}{r}\partial_{r}(rv)+\partial_{z} w=0 \;\; on \;\; \Gamma_{\zeta_t},\\
\langle{{\bf u}}_t , {\bf n}_t \rangle=0 \;\;  \{z=0 \}\cup\{z=H \}\cup \{r=r_0\},\\
 \frac{\partial \zeta_t}{\partial t} + w\frac{\partial \zeta_t}{\partial z}=v   \qquad on\;\; \{ r= \zeta(t,z)\}.\\
 \end{cases}
\end{equation}
Here, ${\bf n}_t$ is outward unit normal vector of the boundary  $\partial\Gamma_{\varsigma_t}$ for each $t$ fixed. We point out that 
 the equations in (\ref{eq1: conservation of mass physical}) express the conservation of the mass in the physical space for the axisymmetric flows.\\

The complete  proof of the following Lemma in dimension 3 can be found in \cite{Sedjro} and can be easily adapted to a  2-dimensional case.\\

\begin{lemma}\label{le: conservation of total mass} 
Let $\zeta$ be smooth and $\textbf{u}=(v,w)$ a smooth velocity field. Let  $\frak F$ be a smooth  function on $(0, T)\times\mathbb{R}^2$  such that $\frak F_t$ invertible for each $t\in[0, T]$ and the inverse function is smooth.\\
 Let $  \sigma :(0,T)\longmapsto \mathcal{P}(\mathbb{R}^2)$ and $V$ be such that
\begin{equation}\label{eq1 : definition of sigma and condition on F} 
 \frak F_{t\#}(r\chi_{\Gamma_{\zeta_t}}(r,z)\mathcal{L}^2)=\sigma_t \quad \text{for each t fixed}\qquad\hbox{and}\qquad V\circ\frak F_t= \frac{D^{\mathbf{u}} \frak F_t}{D t}.
\end{equation}
 
  If $v,w,\zeta$  solve (\ref{eq1: conservation of mass physical}) then $\sigma$ is an absolutely continuous curve, $V$ is smooth and $\sigma, V$ solve 
 
 \begin{equation}\label{eq1: conservation of mass dual space} 
   \frac{\partial \sigma_t}{\partial t}+\div(V_t\sigma_t)=0 \qquad (0,T)\times \mathbb{R}^3.
\end{equation}
in the sense of distributions. Conversely, assume that $r_0<\zeta$ and (\ref{eq1 : definition of sigma and condition on F}) holds.  If $\sigma$ is an absolutely continuous curve, $V$ is smooth  such that $\sigma, V$ solve (\ref{eq1: conservation of mass dual space}) then $v,w,\zeta_t$  solve (\ref{eq1: conservation of mass physical}).
\end{lemma}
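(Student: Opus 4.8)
The plan is to verify both directions of the equivalence by a direct computation, using the change of variables $\frak F_t$ to transport the continuity equation between the physical domain $\Gamma_{\zeta_t}$ and the dual space, together with the characterization of absolutely continuous curves in the Wasserstein space in terms of the continuity equation (as in Ambrosio--Gangbo--Savaré). First I would fix a test function $\phi\in C_c^\infty((0,T)\times\mathbb{R}^2)$ and compute $\frac{d}{dt}\int_{\mathbb{R}^2}\phi(t,q)\,\sigma_t(dq)$; by the push-forward relation in (\ref{eq1 : definition of sigma and condition on F}) this equals $\frac{d}{dt}\int_{\Gamma_{\zeta_t}}\phi(t,\frak F_t(r,z))\,r\,dr\,dz$. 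The key step is then to differentiate under the integral sign, accounting for the time dependence of the domain $\Gamma_{\zeta_t}$ through the free-boundary evolution equation $\partial_t\zeta_t+w\partial_z\zeta_t=v$ (the third equation in (\ref{eq1: conservation of mass physical})) and for the time dependence of the integrand through $\frac{D^{\mathbf u}\frak F_t}{Dt}=V\circ\frak F_t$. Invoking the divergence-free condition $\frac1r\partial_r(rv)+\partial_z w=0$ on $\Gamma_{\zeta_t}$ and the no-flux boundary conditions $\langle\mathbf u_t,\mathbf n_t\rangle=0$ on the rigid parts $\{z=0\}\cup\{z=H\}\cup\{r=r_0\}$, the boundary terms from Reynolds transport collapse: the rigid-boundary contributions vanish by the Neumann condition, and the moving-boundary contribution is exactly cancelled by the kinematic condition, so that what remains is $\int_{\Gamma_{\zeta_t}}\langle\nabla_q\phi(t,\frak F_t),\frac{D^{\mathbf u}\frak F_t}{Dt}\rangle\,r\,dr\,dz+\int_{\Gamma_{\zeta_t}}\partial_t\phi(t,\frak F_t)\,r\,dr\,dz$. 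Pushing forward again via $\frak F_t$ and using $V\circ\frak F_t=\frac{D^{\mathbf u}\frak F_t}{Dt}$ yields $\int_{\mathbb{R}^2}(\partial_t\phi+\langle\nabla\phi,V_t\rangle)\,\sigma_t(dq)=0$, which is precisely (\ref{eq1: conservation of mass dual space}) in the sense of distributions. The absolute continuity of $t\mapsto\sigma_t$ and smoothness of $V$ follow from the smoothness hypotheses on $\zeta$, $\mathbf u$, $\frak F$ and $\frak F^{-1}$, since $V=\frac{D^{\mathbf u}\frak F}{Dt}\circ\frak F^{-1}$ is then a smooth vector field with locally bounded velocity, giving a Lipschitz (hence absolutely continuous) curve in $\mathcal P(\mathbb{R}^2)$.

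For the converse, I would run the same computation in reverse. Assuming $r_0<\zeta$ so that $\frak F_t$ is a genuine diffeomorphism of a neighborhood of $\overline{\Gamma_{\zeta_t}}$ onto a neighborhood of $\spt(\sigma_t)$ with no degeneration at the inner wall, and assuming (\ref{eq1 : definition of sigma and condition on F}), the identity $\int_{\mathbb{R}^2}(\partial_t\phi+\langle\nabla\phi,V_t\rangle)\,\sigma_t(dq)=0$ for all test functions $\phi$ pulls back, via $\frak F_t^{-1}$, to an identity on $\Gamma_{\zeta_t}$ for all test functions $\psi=\phi\circ\frak F_t$. Choosing $\psi$ supported away from the boundary first forces $\frac1r\partial_r(rv)+\partial_z w=0$ in the interior of $\Gamma_{\zeta_t}$ (this is the divergence-free condition), and then allowing $\psi$ to be nonzero on $\partial\Gamma_{\zeta_t}$ and integrating by parts isolates two independent boundary contributions: one on the rigid walls, which must vanish for all admissible $\psi$ and hence gives $\langle\mathbf u_t,\mathbf n_t\rangle=0$ there, and one on $\{r=\zeta(t,z)\}$, which gives the kinematic free-boundary condition $\partial_t\zeta_t+w\partial_z\zeta_t=v$. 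The separation of these boundary terms is legitimate because the rigid walls and the free boundary are disjoint (up to corners of measure zero) and one can localize $\psi$ near each piece.

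The main obstacle I anticipate is the careful bookkeeping in the Reynolds transport step — specifically, tracking the boundary integral on the moving surface $\{r=\zeta(t,z)\}$ and checking that the normal velocity of this surface matches $v$ via the kinematic condition, so that the flux terms cancel exactly rather than merely up to a sign or a Jacobian factor. One must correctly compute the outward normal $\mathbf n_t$ and the surface speed in the $(r,z)$ coordinates with the weight $r\,dr\,dz$, and verify that the weight does not introduce spurious terms (it does not, because $r$ is time-independent). A secondary technical point is ensuring that $\frak F_t$ and its inverse remain smooth up to the boundary and that the push-forward in (\ref{eq1 : definition of sigma and condition on F}) is compatible with the normalization $\int_{\Gamma_{\zeta_t}}r\,dr\,dz=1$ (this is where the hypothesis that $r\chi_{\Gamma_{\zeta_t}}$ is a probability density, implicit via $\sigma_t\in\mathcal P(\mathbb{R}^2)$, is used); both are guaranteed here by the standing smoothness assumptions, so the argument is essentially a bookkeeping exercise once the transport formula is set up correctly. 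Since the three-dimensional version of this lemma is proved in \cite{Sedjro}, I would also note explicitly that the only change in two dimensions is the replacement of the cylindrical volume element and the dimension count, with no new analytic input required.
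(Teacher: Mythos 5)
Your plan is correct, and it is exactly the standard Reynolds-transport/push-forward argument one expects here: the paper itself does not reprint a proof but simply defers to the three-dimensional version in \cite{Sedjro}, noting it ``can be easily adapted,'' which is precisely what you do. Your bookkeeping of the three boundary pieces (divergence-free condition killing the interior term, Neumann condition killing the rigid-wall flux, kinematic condition cancelling the moving-boundary flux) is the right decomposition, and the caveats you flag — that $r$ being time-independent prevents spurious Jacobian terms, and that $r_0<\zeta$ keeps $\frak F_t$ a genuine diffeomorphism — are the correct technical points to watch.
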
\\

In the sequel, we define  ${\bf s}:[0, \infty)\times [0, H]\longrightarrow \mathcal{D}$ by 
 $${\bf s}(r,z)=\left( \frac{1}{2}\big( r_0^{-2}- r^{-2} \big), z \right). $$
 Note that ${\bf s} $ is invertible with inverse 
$${\bf d}(s,z)=  ({\bf d}_1 (s,z),{\bf d}_2(s,z))= \left(\frac{r_0}{\sqrt{1-2r_0^2s}} , z\right). $$

\begin{proposition}
Let $T>0$. Let $\varphi$ be smooth and define $P$ as in (\ref{eq:P in term of the pressure}). Assume $\varphi$ is such that  P is convex.  Let $ \Psi$ be the Legendre transform of $P$,  and $u, \theta, \zeta$ be smooth functions such that $r\chi_{\Gamma_{\zeta_t}}(r,z)$ is a probability measure for each $t$ fixed. Let ${\bf u}=(v,w)$ be a smooth velocity field  and set
\begin{equation} \label{eq1: unphisical  define V solution} 
  V_t\circ  \nabla P_t \circ {\bf s}:=\frac{D^{{\bf u}}}{D t} [\nabla P_t \circ {\bf s}].
\end{equation}
 If $ u$, $(v,w)$, $\varphi$, $\theta $ and  $\zeta$ satisfy (\ref{first1})-(\ref{fifth1}) and (\ref{eq: stability condition})  then $V$ is given by (\ref{eq1 unphysical1: velocity in dual space}). Furthermore, assume  $ \left\lbrace \sigma_t \right\rbrace $ is a family of a probability measures, absolutely continuous with respect to the Lebesgue measure such that $\nabla P_t\circ {\bf s}$ pushes $r\chi_{\Gamma_{\zeta_t}}(r,z)$ forward  to $\sigma_t$. If $( v,w)$, $\zeta$ solve (\ref{fourth1})and (\ref{eq: non physical free boundary}),  then 
  \begin{equation}\label{eq : cont eq} 
  \frac{\partial \sigma_t}{\partial t}+\div(V\sigma_t)=0 \qquad (0,T)\times \mathbb{R}^2
 \end{equation}
in the sense of distributions. \\
Conversely, let $\left\lbrace\sigma_t\right\rbrace_{t\in[0,T]}$ be a family of $\mathcal{P}(\mathbb{R}^2)$,  the set of Borel probability measures,  with supports in a fixed ball of  $\mathbb{R}^2_+$ and absolutely continuous with respect to the Lebesgue measure. Assume, for  each t fixed, that $(P_t^{\sigma_t},\Psi_t^{\sigma_t}, \rho_t^{\sigma_t})$ are smooth such that $(P_t^{\sigma_t},\Psi_t^{\sigma_t})$ are convex and Legendre transforms of each other and $\rho^{\sigma}>0$. Additionally, assume that  $\nabla P_t^{\sigma_t}$ and $\nabla \Psi_t^{\sigma_t}$ are inverse of each other  (on the interior of their domains)  and $(P_t^{\sigma_t},\Psi_t^{\sigma_t},\rho_t^{\sigma_t}, \sigma_t)$ satisfy (\ref{eq intro: Monge Ampere with parameter}) with

  \begin{equation}\label{eq : condition on free boundary on dual space} 
    P_t^{\sigma_t}(\rho_t^{\sigma_t}(H),H)=\Omega^2 r_0^2/ [2(1 -2r_0^2 \rho_t^{\sigma_t}(H))].
  \end{equation} 
 
  Define $\varphi^{\sigma}$, $\zeta^{\sigma}$, $u^{\sigma}$ and $ \theta^{\sigma}$ respectively through (\ref{eq:P in term of the pressure}), (\ref{eq: expressions of measure in physical space}) and
  
  \begin{equation} \label{eq1: unphisical  define u theta varphi solution} 
 ( u^{\sigma} r+r^2\Omega)^2=\partial_{s}P^{\sigma}\circ {\bf s},\qquad\frac{g}{\theta_0}\theta^{\sigma}=\partial_{z}P^{\sigma}\circ {\bf s}.
\end{equation}

 Let $V^{\sigma}$ be the velocity field as in (\ref{eq1 unphysical1: velocity in dual space}), when $\Psi$ is replaced by $\Psi^{\sigma}$, and let ${\bf u}^{\sigma}=(v^{\sigma},w^{\sigma})$ be the  velocity field  such that 
\begin{equation} \label{eq1: define u ^sigma} 
{\bf u}^{\sigma}\circ\nabla \Psi_t^{\sigma} \circ {\bf d}=\frac{D^{V^{\sigma}}}{D t} [\nabla \Psi_t^{\sigma} \circ {\bf d}].
\end{equation}
  
   If $\left\lbrace\sigma_t\right\rbrace_{t\in[0,T]}$ and $V^{\sigma}$ solve (\ref{eq00: Continuity equation unphysical problem}) then $(u^{\sigma}, v^{\sigma},w^{\sigma})$ $\theta^{\sigma}$ $\varphi^{\sigma}$ and $\zeta^{\sigma}$ solve (\ref{eq:non physical})-(\ref{eq: bdary cond on pressure}) and (\ref{eq: stability condition}).

\end{proposition}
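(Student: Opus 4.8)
The plan is to prove the two implications separately, both organized around the change of variables $\frak F_t := \nabla P_t\circ{\bf s}$, whose inverse on the interiors of the relevant domains is ${\bf d}\circ\nabla\Psi_t$ (by the Legendre duality between $P_t$ and $\Psi_t$ and by ${\bf s}={\bf d}^{-1}$). With $\frak F_t$ fixed, the whole argument reduces to three ingredients: chain-rule identities expressing the physical unknowns through the two components of $\frak F_t$, the Monge--Amp\`ere relation (\ref{eq intro: Monge Ampere with parameter}) read as a pushforward statement, and Lemma \ref{le: conservation of total mass}.

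For the direct implication I would first record the pointwise identities that follow from (\ref{third1}) and (\ref{fifth1}): using $\partial_r s = r^{-3}$ one rewrites them as $\partial_s P_t\circ{\bf s}=(ur+\Omega r^2)^2$ and $\partial_z P_t\circ{\bf s}=\tfrac{g}{\theta_0}\theta$, so the components of $\frak F_t$ are $(ur+\Omega r^2)^2$ and $\tfrac{g}{\theta_0}\theta$. Differentiating these along the flow of ${\bf u}=(v,w)$, using $\tfrac{Dr}{Dt}=v$, using (\ref{first1}) in the form $r\tfrac{Du}{Dt}+(u+2\Omega r)\tfrac{Dr}{Dt}=F$, and using (\ref{second1}), gives $\tfrac{D^{{\bf u}}}{Dt}[\frak F_t]=\bigl(2\sqrt{\Upsilon}\,F_t,\ \tfrac{g}{\theta_0}S_t\bigr)$ with $F_t,S_t$ evaluated at $(r,z)=\frak F_t^{-1}(\Upsilon,Z)=\bigl(r_0/\sqrt{1-2r_0^2\,\partial_\Upsilon\Psi_t},\ \partial_Z\Psi_t\bigr)$; inserting this into the definition (\ref{eq1: unphisical  define V solution}) is exactly (\ref{eq1 unphysical1: velocity in dual space}). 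For the continuity equation I would then invoke Lemma \ref{le: conservation of total mass} with $\frak F=\frak F_t$: it is invertible with smooth inverse by the convexity of $P_t$ and (\ref{eq: stability condition}), $V$ is smooth by the formula just obtained, the pushforward hypothesis is assumed, and the pair (\ref{fourth1}) and (\ref{eq: non physical free boundary}) is precisely system (\ref{eq1: conservation of mass physical}); the lemma then delivers (\ref{eq : cont eq}).

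For the converse I would verify the physical system equation by equation. Equations (\ref{third1}) and (\ref{fifth1}) come from the same chain-rule identities read backwards out of (\ref{eq:P in term of the pressure}) and (\ref{eq1: unphisical  define u theta varphi solution}); (\ref{eq: stability condition}), in the form that $\nabla P^\sigma_t$ and hence the gradient map attached to $\varphi^\sigma+\Omega^2 r^2/2 = P^\sigma_t\circ{\bf s}$ is invertible, follows from the hypothesis that $\nabla P^\sigma_t$ and $\nabla\Psi^\sigma_t$ are mutually inverse together with the non-degeneracy forced by (\ref{eq intro: Monge Ampere with parameter}). Next, combining (\ref{eq intro: Monge Ampere with parameter}) with (\ref{eq: expressions of measure in physical space}) and the change of variables $2s=r_0^{-2}-r^{-2}$ shows that ${\bf d}\circ\nabla\Psi^\sigma_t$ pushes $\sigma_t$ forward to $r\chi_{\Gamma_{\zeta^\sigma_t}}\calL^2$, i.e.\ $\frak F_t$ satisfies the pushforward hypothesis of (\ref{eq1 : definition of sigma and condition on F}); since $\rho^\sigma>0$ forces $\zeta^\sigma>r_0$, and since $V^\sigma$ is smooth and $(\sigma,V^\sigma)$ solves (\ref{eq00: Continuity equation unphysical problem}) by assumption, the converse half of Lemma \ref{le: conservation of total mass} applies --- once we know that $V^\sigma\circ\frak F_t=\tfrac{D^{{\bf u}^\sigma}}{Dt}[\frak F_t]$ --- and yields (\ref{fourth1}) and (\ref{eq: non physical free boundary}). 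Finally, expanding $\tfrac{D^{{\bf u}^\sigma}}{Dt}$ of $u^\sigma=r^{-1}\sqrt{\Upsilon}-\Omega r$ and $\theta^\sigma=\tfrac{\theta_0}{g}Z$ (the components of $\frak F_t$), using $V^\sigma\circ\frak F_t=\tfrac{D^{{\bf u}^\sigma}}{Dt}[\frak F_t]$ and the explicit form of $V^\sigma$ to get $\tfrac{D\sqrt{\Upsilon}}{Dt}=F_t$ and $\tfrac{DZ}{Dt}=\tfrac{g}{\theta_0}S_t$ at the physical point, and using $\tfrac{Dr}{Dt}=v^\sigma$ and $\sqrt{\Upsilon}=u^\sigma r+\Omega r^2$, yields (\ref{first1}) and (\ref{second1}); and (\ref{eq: bdary cond on pressure}) is (\ref{eq : condition on free boundary on dual space}) rewritten through $r^2=r_0^2/(1-2r_0^2 s)$ at $s=\rho^\sigma_t(H)$, giving $\varphi^\sigma(\zeta^\sigma(t,H),H)=0$.

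I expect the main obstacle to be the identity $V^\sigma\circ\frak F_t=\tfrac{D^{{\bf u}^\sigma}}{Dt}[\frak F_t]$, on which both the application of Lemma \ref{le: conservation of total mass} and the verification of (\ref{first1}) and (\ref{second1}) rest, because ${\bf u}^\sigma$ is only defined through the inverse change of variables in (\ref{eq1: define u ^sigma}). I would obtain it by showing that the time-dependent diffeomorphisms $\frak F_t$ conjugate the Lagrangian flow of $V^\sigma$ to that of ${\bf u}^\sigma$: if $\Phi_t$ denotes the flow of $V^\sigma$, then $\frak F_t^{-1}\circ\Phi_t\circ\frak F_0$ is, by (\ref{eq1: define u ^sigma}), a flow of ${\bf u}^\sigma$, and differentiating the identity $\frak F_t\circ(\frak F_t^{-1}\circ\Phi_t\circ\frak F_0)=\Phi_t\circ\frak F_0$ in $t$ gives the claim. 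This must be carried out with care, since the maps in play are diffeomorphisms only on the interiors of $\spt\sigma_t$ and $\Gamma_{\zeta^\sigma_t}$, and since one must keep straight which of the variables $(r,z)$, $(s,z)$, $(\Upsilon,Z)$ each quantity lives on; the rest is bookkeeping with the chain rule and the change of variables.
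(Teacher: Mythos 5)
Your proposal is correct and follows essentially the same route as the paper: reduce to the change of variables $\frak F_t=\nabla P_t\circ{\bf s}$, identify its components with $(ur+\Omega r^2)^2$ and $\tfrac{g}{\theta_0}\theta$ via (\ref{third1})--(\ref{fifth1}), differentiate along the Lagrangian flow to read off $V$, and invoke Lemma \ref{le: conservation of total mass} for the mass conservation; the converse is the same calculation run backwards. The one place you go beyond the paper is in justifying that (\ref{eq1: define u ^sigma}) is equivalent to $V^\sigma\circ\frak F_t=\tfrac{D^{{\bf u}^\sigma}}{Dt}[\frak F_t]$ via conjugation of Lagrangian flows, whereas the paper simply asserts this ``can be rewritten as'' using the inverse relation between $\nabla P^\sigma_t$ and $\nabla\Psi^\sigma_t$; your flow argument (or the equivalent chain-rule manipulation obtained by differentiating $\frak F_t\circ\frak G_t=\id$ in time, with $\frak G_t={\bf d}\circ\nabla\Psi^\sigma_t$) is a reasonable and arguably needed elaboration of that step.
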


\proof{}
We first note that if 

\begin{equation} \label{eq1: axisymmetric state} 
\nabla P_t\circ {\bf s}=\left[ ( u r+\Omega r^2)^2,g\frac{\theta}{\theta_0} \right] 
\end{equation} 
then
\begin{equation} \label{eq1: unphisical Material Derivative of the gradient of P} 
 \begin{aligned}
\frac{D}{Dt}[\nabla P_t\circ {\bf s}] &= \left[  2( u r+\Omega r^2)\frac{D}{Dt}( u r+\Omega r^2),\frac{g}{\theta_0}\frac{D \theta}{Dt} \right]\\
                                            &=\left[ 2(\sqrt{\partial_{s} P}\circ {\bf s} )(r\frac{{\rm D}  u}{{\rm D} t}+ u v +2r\Omega  v ),\frac{g}{\theta_0}\frac{D\theta}{Dt} \right].\\                                         
\end{aligned}
\end{equation}

Assume $u$, $\varphi$, $\theta $ satisfy (\ref{third1}) and (\ref{fifth1}). Then, in view of  (\ref{eq:P in term of the pressure}), a straightforward computation shows these equations are equivalent to (\ref{eq1: axisymmetric state}). Assume, in addition, that $( u,v,w)$, $\varphi$ and $\theta $   satisfy (\ref{first1}) (\ref{second1}). Then,  using (\ref{eq1: unphisical Material Derivative of the gradient of P}), we have

\begin{equation} \label{eq2: unphisical Material Derivative of the gradient of P} 
 \begin{aligned}
\frac{D}{Dt}[\nabla P_t\circ {\bf s}] =\left[ 2(\sqrt{\partial_{s} P}\circ {\bf s} )F,\;\frac{g}{\theta_0} S \right].                                             
\end{aligned}
\end{equation}

Moreover, if (\ref{eq: stability condition}) holds, we use the fact that $\Psi_t$ is the Legendre transform of $P_t$  to obtain that $\nabla P_t$ is invertible with inverse $\nabla \Psi$ so that, by combining  (\ref{eq1: unphisical  define V solution}) and (\ref{eq2: unphisical Material Derivative of the gradient of P}), we get (\ref{eq1 unphysical1: velocity in dual space}). Note that  if $( v,w)$, $\zeta$ solve (\ref{fourth1}) and (\ref{eq: non physical free boundary})  then (\ref{eq : cont eq}) follows directly from lemma \ref{le: conservation of total mass}.\\
 
 Conversely, note again that, as $\varphi^{\sigma}$ is defined through (\ref{eq:P in term of the pressure}), the equations in  (\ref{eq1: unphisical  define u theta varphi solution}) imply that $\varphi^{\sigma}$, $u^{\sigma}$ and $\theta^{\sigma}$  solve  (\ref{third1}) and (\ref{fifth1}). Since $\nabla \Psi^{\sigma}$ is invertible with inverse $\nabla P^{\sigma}$, (\ref{eq1: define u ^sigma}) can be rewritten as
 
 \begin{equation} \label{eq1: unphisical  define V[sigma] solution} 
  V_t^{\sigma}\circ  \nabla P^{\sigma}_t \circ {\bf s}:=\frac{D^{{\bf u}^{\sigma}}}{D t} [\nabla P^{\sigma}_t \circ {\bf s}].
\end{equation}

This, in light of (\ref{eq1: unphisical Material Derivative of the gradient of P}) and (\ref{eq1 unphysical1: velocity in dual space}) yields that $u^{\sigma}$, $v^{\sigma}$, $w^{\sigma}$ and $\theta^{\sigma}$ solve (\ref{first1}) (\ref{second1}). Note that $\rho^{\sigma}>0$ is equivalent to $\zeta^{\sigma}>r_0$.
As $P^{\sigma}$ is smooth and $\nabla P^{\sigma}$, $\nabla \Psi^{\sigma}$ are inverse of each other,(\ref{eq intro: Monge Ampere with parameter}) means that $\nabla P^{\sigma}_t$ pushes $e(s)\chi_{D_{\rho_t}}$ forward to $\sigma_t$, that is,  $\nabla P^{\sigma}_t\circ {\bf s}$ pushes $r\chi_{\Gamma_{\zeta^{\sigma}_t}}$  forward to $\sigma_t$. This, combined with (\ref{eq1: unphisical  define V[sigma] solution}), yields that  the equation (\ref{eq : cont eq}) with $V_t:= V_t^{\sigma}$, implies (\ref{fourth1}) and (\ref{eq: non physical free boundary}) by applying  lemma \ref{le: conservation of total mass}.  In view of (\ref{eq:P in term of the pressure}), we use (\ref{eq : condition on free boundary on dual space}) to obtain (\ref{eq: bdary cond on pressure}) \endproof



\section{Duality Methods and Monge-Ampere Problem}
\label{sec:Duality Methods and Monge-Ampere Problem}

In this section, we show the existence and uniqueness for the minimization problem \eqref{eq intro:primal1} by coming up with a dual problem. This provides a unique solution to the Monge Ampere equation (\ref{eq intro: Monge Ampere with parameter}) in some sense. Furthermore, this dual formulation helps  establish some regularity result for the domain $D_{\rho}$ in (\ref{eq intro: Monge Ampere with parameter}). 

Let $\sigma\in \calP_{[R_0]}$; we consider a system of PDEs, where the unknowns are
\begin{equation}\label{eq: dom P, Psi, h}
\Psi:[0,\infty)\times[0, \infty)\longrightarrow\mathbb{R},\qquad P:[0,1/2r_0^2)\times [0,H]\longrightarrow\mathbb{R},\qquad \rho:[0,H]\longrightarrow [0,1/2r_0^2).
\end{equation}
We impose that $\Psi$ and $P$ be  Legendre transforms of each other and that these functions solve the system of equations
\begin{equation}\label{eq:Monge Ampere}
 \begin{cases}
e(\frac{\partial \Psi}{\partial \Upsilon})\det(\nabla_{\Upsilon,Z}^{2}\Psi)= \sigma, \\
\nabla\Psi( spt(\sigma ))= D_{\rho},\\
P(\rho (z),z)={\Omega^2 r_0^2\over 2(1 -2r_0^2 \rho( z)) } \quad \hbox{on} \quad \{\rho >0\}.
\end{cases}
\end{equation} 

\begin{definition}\label{def: solution}
 (i) Assume that $\sigma=\rho\calL^2$.  Let  $P,\;\Psi,\;\rho $ be as in (\ref{eq: dom P, Psi, h}) such that $P,\;\Psi$ are  Legendre transforms of each other. We say that  $P$, $\Psi$ and $\rho$ solve equation (\ref{eq:Monge Ampere}) in a weak sense if 
\textbf{\textbf{}}\begin{equation}\label{eq: definition Monge Ampere solution}
 \begin{cases}
 \nabla\Psi_{\#}\sigma=e(s)\chi_{D_{\rho}}(s,z)\calL^2, \\
P(\rho(z),z)={\Omega^2 r_0^2\over 2(1 -2r_0^2 \rho( z)) } \quad \hbox{on} \quad \{\rho>0\}.
\end{cases}
\end{equation} 
(ii) We say that  $P$, $\Psi$ and $\rho$ solve equation (\ref{eq:Monge Ampere}) in the dual weak sense if 
\begin{equation}\label{eq: definition Monge Ampere solution1}
 \begin{cases}
 \nabla P_{\#}\left( e(s)\chi_{D_{\rho}}(s,z)\calL^2\right) =\sigma, \\
P(\rho(z),z)={\Omega^2 r_0^2\over 2(1 -2r_0^2 \rho( z)) } \quad \hbox{on} \quad \{\rho>0\}.
\end{cases}
\end{equation}

\end{definition}

Our main result  in this section is the following :\\

\begin{theorem}\label{thm : existence and uniqueness MA} 
 Let $\sigma\in \calP_{[R_0]}$ such that $\spt(\sigma)=\Delta$. Then, (\ref{eq:Monge Ampere})  admits a unique variational solution  $(\bar\Psi, \bar P, \bar \rho)$ in the sense that $(\bar\Psi, \bar P)$ is obtained as the unique maximizer in (\ref{eq:statement of dual problem}) and $\bar \rho$ is monotone, bounded away from $\frac{1}{2r_0^2}$ and obtained as the unique minimiser in (\ref{eq:primal1}).
 Moreover, if the support of $\sigma$ is contained in $[\frac{1}{R_0},R_0]\times [0,R_0]$ then $\partial D_{\bar \rho}$  is a finite union of the graphs of Lipschitz continuous functions.

\end{theorem}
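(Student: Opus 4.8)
My plan is to treat the three assertions of the theorem separately, building a duality bridge between the primal problem \eqref{eq intro:primal1} and the dual problem \eqref{eq intro:dual1}, and then extracting regularity of $\partial D_{\bar\rho}$ from the optimality conditions. First I would establish that the infimum in \eqref{eq intro:primal1} is attained: the key observation (already flagged in the introduction, following \cite{Sedjro}) is that replacing $\rho$ by its monotone rearrangement $\rho^{\#}$ does not increase the value of the inner functional $\bar I[\sigma](\rho)$, so one may restrict the minimization over $\mathfrak{L}_\sigma$ to monotone $\rho$; since monotone functions bounded by $1/(2r_0^2)$ are precompact in the pointwise topology and the constraint $\int_{D_\rho} e\,ds\,dz = 1$ together with lower semicontinuity of the transport cost survive pointwise limits, a minimizing pair $(\bar\rho,\bar\gamma)$ exists. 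The bound away from $1/(2r_0^2)$ follows because $e(s)\to\infty$ as $2r_0^2 s\to 1$, so the normalization $\int_{D_\rho}e\,ds\,dz=1$ forces $\sup\bar\rho<1/(2r_0^2)$; I would make this quantitative.

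Next I would set up the duality. One writes the inner infimum over $\rho\in\calH_0$ in \eqref{eq intro:dual1} pointwise in $z$: for fixed $z$ the optimal $\rho(z)$ maximizes $s\mapsto \int_0^{\rho(z)}\big(P(s,z)-\tfrac{\Omega^2 r_0^2}{2(1-2sr_0^2)}\big)e(s)\,ds$, giving the marginal condition $P(\rho(z),z)=\tfrac{\Omega^2 r_0^2}{2(1-2r_0^2\rho(z))}$ on $\{\rho>0\}$, which is exactly the third equation of \eqref{eq:Monge Ampere}. The outer part is a standard Kantorovich duality between the linear transport cost with cost $-\langle p,q\rangle$ and the pair $(P,\Psi)$ constrained to $\mathcal U$; Legendre duality forces the maximizing pair to be Legendre transforms of each other, and the Kantorovich optimality (the support of $\bar\gamma$ lies in the subdifferential graph) yields $\nabla\bar\Psi\circ\nabla\bar P=\id$ on $e(s)\chi_{D_{\bar\rho}}\calL^2$-a.e. and $\nabla\bar P\circ\nabla\bar\Psi=\id$ $\sigma$-a.e., hence $(\bar P,\bar\Psi,\bar\rho)$ solves \eqref{eq:Monge Ampere} in both weak senses of Definition \ref{def: solution}. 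I would prove $\sup = \inf$ by the usual Fenchel–Rockafellar argument (convexity of the cost in $(P,\Psi)$ after the substitution, and an approximation to handle the non-compact $q$-variable, using that $\sigma$ is supported in $[0,R_0]^2$).

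Uniqueness is where I expect the main difficulty, and it is exactly the point the authors single out ("we do not know any strict convexity property"). The plan is to avoid convexity of the primal functional and instead argue on the dual side via a twist-type condition: suppose $(\bar P_1,\bar\Psi_1,\bar\rho_1)$ and $(\bar P_2,\bar\Psi_2,\bar\rho_2)$ are both optimal. The cost function $c(p,q)=-\langle p,q\rangle+\tfrac{\Upsilon}{2r_0^2}-\Omega\sqrt\Upsilon+\tfrac{r_0^2\Omega^2}{2(1-2r_0^2 s)}$ is smooth and its mixed derivative $\partial^2_{pq}c = -\id$ is nondegenerate (the twist condition), so the optimal plan between $e(s)\chi_{D_{\bar\rho}}\calL^2$ and $\sigma$ is unique and induced by the map $\nabla\bar P$ once $D_{\bar\rho}$ is fixed; thus it suffices to show $\bar\rho_1=\bar\rho_2$. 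For that I would use the characterization of $\bar\rho$ as the pointwise-in-$z$ maximizer above together with the observation that $\bar P$ restricted to $\spt\sigma$ is the common dual potential: differences of the potentials are controlled by the $c$-concavity structure, and the marginal equation $\bar P(\bar\rho(z),z)=\tfrac{\Omega^2 r_0^2}{2(1-2r_0^2\bar\rho(z))}$ combined with monotonicity of $\bar\rho$ and strict monotonicity of $s\mapsto\tfrac{\Omega^2 r_0^2}{2(1-2r_0^2 s)}$ pins down $\bar\rho$ uniquely from $\bar P$; finally a self-improvement argument shows $\bar P$ itself is unique on the relevant region. I would carry this out carefully since it is the crux.

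For the last assertion, assume $\spt(\sigma)\subset[\tfrac1{R_0},R_0]\times[0,R_0]$. Then $\nabla\bar\Psi$ maps $\spt(\sigma)$ onto $D_{\bar\rho}$, and $\bar\Psi$, being the Legendre transform of $\bar P$ defined on the bounded strip $\mathcal D$, is a finite convex function whose gradient is bounded; moreover on $\spt(\sigma)$ one has $q=(\Upsilon,Z)$ with $\Upsilon\in[\tfrac1{R_0},R_0]$, which via $\Upsilon=\partial_s\bar P\circ\mathbf s$-type relations keeps things away from the singular face $2r_0^2 s=1$ and bounds $\bar P$ in $C^{1,1}$ on compact subsets. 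The boundary $\partial D_{\bar\rho}$ decomposes into the fixed flat pieces $\{s=0\}$, $\{z=0\}$, $\{z=H\}$ and the graph $\{s=\bar\rho(z)\}$; since $\bar\rho$ is monotone and bounded away from $1/(2r_0^2)$, it is continuous except possibly at countably many jumps, and the marginal equation $\bar P(\bar\rho(z),z)=\tfrac{\Omega^2 r_0^2}{2(1-2r_0^2\bar\rho(z))}$ together with the Lipschitz bound on $\bar P$ and the strict monotonicity (with nonvanishing derivative on compacts) of the right-hand side lets me solve for $\bar\rho(z)$ as a Lipschitz function of $z$ on each interval where the relevant branch of the implicit function theorem applies; the monotone jumps of $\bar\rho$ can only occur where the graph is vertical, and there are finitely many such once one uses the uniform bounds, so $\partial D_{\bar\rho}$ is a finite union of Lipschitz graphs. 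I would phrase the implicit-function step with explicit constants coming from $R_0,r_0,\Omega,H$ so the finiteness of the decomposition is manifest.
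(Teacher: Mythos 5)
Your outline is right about the architecture---primal/dual formulation, Kantorovich-type duality, optimality conditions yielding \eqref{eq:Monge Ampere}, and then regularity from the dual potential---but the two genuinely hard steps are not carried by what you sketch, and in each case you are missing the specific mechanism the paper uses.

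\textbf{Uniqueness of $\bar\rho$.} The Kantorovich twist $\partial^2_{pq}c=-\mathrm{id}$ does, as you say, pin down the optimal plan once $D_{\bar\rho}$ is fixed. But the reduction to ``show $\bar\rho_1=\bar\rho_2$'' is then handled in your proposal by asserting that the marginal identity $\bar P(\bar\rho(z),z)=\tfrac{\Omega^2 r_0^2}{2(1-2r_0^2\bar\rho(z))}$ pins down $\bar\rho(z)$ from $\bar P$. This is false as stated: for a fixed $z$ the equation $\partial_s\Pi_{\bar P}(\rho,z)=0$ may have multiple roots, and the minimum of $\Pi_{\bar P}(\cdot,z)$ may be attained on a nontrivial set. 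What the paper actually uses is a second, different twist-type condition---on the \emph{inner} functional $\Pi_P$, namely $\partial^2\Pi_P/\partial z\,\partial s=-e(s)\,\partial_z P\le 0$ (Lemma~\ref{le:boundonheight1*1}(iii))---to obtain the ordering $\rho^+(z_1)\le\rho^-(z_2)$ for $z_1<z_2$ (Lemma~\ref{le:boundonheight3}), and from that Corollary~\ref{co:uniqueheight1}: the set $\Arg_{\bar P}(\cdot,z)$ is a singleton for all but countably many $z$. Uniqueness of $\bar\rho$ up to null sets then follows. Additionally, the paper's uniqueness for the dual potential is only up to the relevant supports ($P_1=P_0$ on $D_{\bar\rho_0}$, $\Psi_1=\Psi_0$ on $\Delta$), which your ``self-improvement'' step doesn't clearly capture. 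You should make the $\Pi_P$-twist and the countable-exceptional-set corollary explicit; the Kantorovich twist alone does not suffice.

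\textbf{Lipschitz boundary.} Your plan is to show $\bar\rho$ has only finitely many jumps and is Lipschitz off of them. Neither claim is established by the argument you give, and neither is what the paper proves. Monotone $\bar\rho$ can have countably many jumps, and no a priori bound forces finiteness. The paper instead proves that the \emph{inverse} $\mathfrak{a}(s)=\inf\{z:\rho^-(z)\ge s\}$ is a single Lipschitz function of $s$ (Lemma~\ref{lem: existence of h inverse 1}, Corollary~\ref{co: existence of h inverse 1}), so the entire right boundary---vertical segments included---is one Lipschitz graph, and the rest of $\partial D_{\bar\rho}$ is a few straight segments. Moreover the constant in that Lipschitz estimate comes from the lower bound on the \emph{$z$-derivative} of $\bar P$ (the twist inequality in Lemma~\ref{lem: existence of h inverse 1}(i) uses $\partial_z P\ge 1/R_0$ through the subgradient inclusion $\partial_\cdot\bar P\subset\spt\sigma$), not from keeping $s$ away from the singular face $2r_0^2 s=1$ as you suggest; the bound $\bar\rho\le M_{\bar P}<1/(2r_0^2)$ is obtained separately in Lemma~\ref{le:boundonheight1}. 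So the implicit-function flavour of your argument is pointing in the right direction, but the correct picture is the Lipschitz control of $\mathfrak{a}$ coming from the $z$-twist, not from nonsingularity of $e$, and ``finitely many jumps of $\bar\rho$'' is neither needed nor true in general.

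On the other pieces you are essentially aligned with the paper: existence of a dual maximizer via the a priori bound of Lemma~\ref{le:aprioribound1} and Ascoli--Arzelà, strong duality by verifying $I=J$ at the optimal pair (the paper does this constructively in Proposition~\ref{Proposition MA}(ii)--(iii) rather than abstract Fenchel--Rockafellar), and the bound $\bar\rho<1/(2r_0^2)$ via the a priori level-set estimate for $\Pi_P$. The monotone-rearrangement variant you mention for primal existence is noted in the introduction but the paper routes existence entirely through the dual problem.
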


\subsection{Primal and  Dual formulation of the problem}

Let $\sigma\in \mathcal{P}_{[R_0]}$. As discussed in the introduction, the Hamiltonian involves a certain functional that is  given by 
\begin{equation}\label{eq:forI} 
I(\rho,\gamma):=\int_{D_\rho \times \Delta} \left( -\langle p, q \rangle +{\Upsilon \over 2 r_0^2}-\Omega \sqrt \Upsilon + {r_0^2 \Omega^2 \over 2(1-2 r_0^2 s)}\right)  \gamma(dp, dq), 
\end{equation}
with $p=(s,z)$, $q=(\Upsilon,Z)$ and $\gamma \in \Gamma(e(s)\chi_{D_{\rho}}(s,z)\calL^2, \sigma)$.

 We recall that
 
$$ D_{\rho}=\left\lbrace (s,z): 0\leq s\leq \rho(z), z\in [0,H] \right\rbrace \qquad \text{ and } \qquad e(s)=r_0^4/(1-2s r_0^2)^2 \quad \text{for}\quad 0\leq 2r_0^2s <1.$$

 We consider the following minimization problem :

\begin{equation}\label{eq:primal1} 
\inf_{(\rho, \gamma)\in\frak{L}_{\sigma}} I(\rho,\gamma), 
\end{equation}
where 
 \begin{equation}\label{eq:forGamma}
  \frak{L}_{\sigma}=\Bigl\{(\rho,\gamma): \rho\in\calH_0  , \int_{D_{\rho}}e(s)dsdz=1, \gamma \in \Gamma(e(s)\chi_{D_{\rho}}\calL^2, \sigma)\Bigr\}.
 \end{equation}

$\calH_0$ is the set of all Borel measurable functions $\rho: [0,H]\longrightarrow [0,\frac{1}{2r_0^2}).$
To study the minimization problem in (\ref{eq:primal1}), we will introduce what will turn out to be its dual formulation by setting:
\begin{equation}\label{eq:forJ} 
J[\sigma](\Psi, P)= \int_{\mathbb{R}^2} \Bigl({\Upsilon \over 2r_0^2} -\Omega \sqrt \Upsilon- \Psi\Bigr) \sigma(dq) +j(P); \quad 
j(P)=\inf_{\rho \in \calH_0} \int_0^H \Pi_P(\rho(z),z)dz.
\end{equation}
 $J[\sigma]$ is defined on 
\begin{equation}\label{eq:for calU}
 \calU:=\Bigl\{ (\Psi, P)\in C(\Reals^2_+)\times C( \bar{\mathcal{D}}): P(p)+\Psi(q) \geq  \langle p, q \rangle \text{ for all } (p,q)\in\mathcal{D}\times\mathbb{R}_+^2 \Bigr\}.
\end{equation}

To $P: \mathcal{D}\longrightarrow \mathbb{R}$ we have associated  
\begin{equation}\label{eq:Pi1} 
\Pi_{P}(\rho,z)=\int_0^\rho \Bigl( \dfrac{\Omega^2 r_{0}^{2}}{2(1-2s r_{0}^{2})}-P(s,z) \Bigr) e(s)ds  \quad \hbox{for} \quad 0\leq 2r_0^2 \rho <1.
\end{equation}
  We observe that if $P_1\leq P_2$ then $\Pi_{P_1}\geq \Pi_{P_2}$   and also that if $P$ is a constant function that is equal to $C$ in (\ref{eq:Pi1}) then 
\begin{equation}\label{eq:Pi2} 
\Pi_{C}(\rho,z) :=\Pi_{P}(\rho,z)=\dfrac{\Omega^2 r_{0}^{6}(1-\rho r^{2}_{0})\rho}{2(1-2\rho r_{0}^{2})^{2}}-\dfrac{ C r_{0}^{4}\rho}{1-2\rho r_{0}^{2}}.
\end{equation} 
 The dual problem we will be looking at is the following:
\begin{equation}\label{eq:statement of dual problem} 
 \sup_{(\Psi,P)\in\calU} J[\sigma](\Psi,P).
\end{equation}


\subsection{ Existence of a minimizer for $\Pi_P(\cdot,z)$ and Twist condition}\label{subsection: Hypotheses}
To any $(P,\Psi)\in\calU$, we associate $(P^\ast,\Psi^\ast)$ defined by:

$$P^\ast(q)= \sup_{p \in\mathcal{D}} \left( \langle p, q \rangle-P(p)\right) \quad q \in \Reals_+^2 \qquad\hbox{ and }\qquad \Psi^\ast(p)= \sup_{q \in \Delta}\left(  \langle p, q \rangle-\Psi(q)\right) \quad p \in \mathcal{D}.$$

Let us denote by $\calU_0$ the subset of $\calU$  given by 
\begin{equation}\label{eq:LegendrepsiP} 
\calU_0 = \left\lbrace (P,\Psi)\in \calU: P(p)= \Psi^\ast(p)\quad p \in \mathcal{D}\quad \hbox{and}\quad \Psi(q)=P^\ast(q) \quad q \in \Reals_+^2\right\rbrace.
\end{equation} 
We note that if $(P,\Psi)\in\calU_0$ then $P$ and $\Psi$ are convex   as supremum of convex functions and 

\begin{equation}\label{eq:subgradients} 
\partial_\cdot P(p)\subset \Delta \text{ for any } p\in\mathcal{D}\qquad\hbox{ and }\qquad \partial_\cdot \Psi(q)\subset \mathcal{D} \text{ for any } q\in  \Reals_+^2.
\end{equation}


We consider  functions $P:\mathcal{D}\rightarrow \Reals$  such that
\begin{equation}\label{eq:conditionP} 
0\leq\frac{\partial P}{\partial z}(s,z)\leq R_0 \qquad\hbox{ and }\qquad 0 \leq\frac{\partial P}{\partial s}(s,z)\leq R_0.
\end{equation} 

\begin{lemma}\label{le:boundonheight1} Let $A\in \mathbb{R}_{+}$. Suppose $\hat{P}:\mathcal{D} \rightarrow \Reals$  such that $\hat{P}\leq \hat{P}(0,0)+A$. Then there exists a constant $M_{\hat{P}}$ depending on $\hat{P}(0,0)$  such that $2r_0^2M_{\hat{P}}<1$ and

\begin{equation}\label{eq:boundonheight1}
\sup_{0\leq z\leq H}\sup_{0 \leq 2r_0^2 \rho<1 } \{ \rho \; : \; \Pi_{\hat{P}}(\rho,z) \leq 0\} \leq M_{\hat{P}}.
\end{equation}
Furthermore, $M_{\hat{P}}$ is monotone nondecreasing in $\hat{P}(0,0)$, that is, if $\hat{P}_1, \hat{P}_2 $ satisfy the hypotheses above and are such that $\hat{P}_1(0,0)\leq \hat{P}_2(0,0) $ then  $M_{\hat{P}_1}\leq M_{\hat{P}_2}$.\\

\end{lemma}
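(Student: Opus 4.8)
The plan is to analyze the sign of $\Pi_{\hat P}(\rho,z)$ as $\rho\to \tfrac{1}{2r_0^2}$ using the explicit comparison with constant functions recorded in \eqref{eq:Pi2}. First I would use the hypothesis $\hat P\le \hat P(0,0)+A$ together with the monotonicity observation $P_1\le P_2 \Rightarrow \Pi_{P_1}\ge \Pi_{P_2}$ from just below \eqref{eq:Pi1}: taking $P_1=\hat P$ and $P_2\equiv C:=\hat P(0,0)+A$ gives $\Pi_{\hat P}(\rho,z)\ge \Pi_{C}(\rho,z)$ for every $z$, where $\Pi_C$ is given in closed form by \eqref{eq:Pi2}. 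Hence it suffices to find $M=M_{\hat P}$ with $2r_0^2M<1$ such that $\Pi_C(\rho,z)>0$ for all $\rho\in (M,\tfrac{1}{2r_0^2})$; this would force $\{\rho:\Pi_{\hat P}(\rho,z)\le 0\}\subset [0,M]$ uniformly in $z$, which is exactly \eqref{eq:boundonheight1}.

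Next I would read off the asymptotics of \eqref{eq:Pi2} as $\rho\uparrow \tfrac{1}{2r_0^2}$. Writing $t=1-2\rho r_0^2\to 0^+$, the first term behaves like a positive constant times $t^{-2}$ (since near $\rho=\tfrac{1}{2r_0^2}$ the numerator $\Omega^2 r_0^6(1-\rho r_0^2)\rho$ is bounded below by a positive constant), while the second term $-Cr_0^4\rho/(1-2\rho r_0^2)$ is of order $t^{-1}$ times $C$. Therefore for $t$ small the $t^{-2}$ term dominates and $\Pi_C(\rho,z)>0$; solving the elementary inequality
\begin{equation*}
\dfrac{\Omega^2 r_{0}^{6}(1-\rho r^{2}_{0})\rho}{2(1-2\rho r_{0}^{2})^{2}} > \dfrac{ C r_{0}^{4}\rho}{1-2\rho r_{0}^{2}}
\end{equation*}
reduces, after cancelling $\rho r_0^4/(1-2\rho r_0^2)>0$, to $\Omega^2 r_0^2(1-\rho r_0^2) > 2C(1-2\rho r_0^2)$, i.e. a linear inequality in $\rho$ whose solution set is an interval of the form $(\rho^\ast,\tfrac{1}{2r_0^2})$ with $\rho^\ast<\tfrac{1}{2r_0^2}$ as soon as $C$ is finite. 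I would then set $M_{\hat P}$ to be, say, the midpoint of $\rho^\ast$ and $\tfrac{1}{2r_0^2}$ (or any fixed value in that interval), which manifestly satisfies $2r_0^2 M_{\hat P}<1$ and depends on $\hat P$ only through $C=\hat P(0,0)+A$, hence only through $\hat P(0,0)$.

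Finally, for the monotonicity claim I would note that the cutoff $\rho^\ast$ (and therefore the chosen $M_{\hat P}$) is an explicit increasing function of $C$: rearranging $\Omega^2 r_0^2(1-\rho r_0^2) > 2C(1-2\rho r_0^2)$ shows that increasing $C$ only enlarges the set of $\rho$ where the inequality fails, i.e. pushes $\rho^\ast$ upward toward $\tfrac{1}{2r_0^2}$; since $C$ is increasing in $\hat P(0,0)$ (with $A$ fixed), $\hat P_1(0,0)\le \hat P_2(0,0)$ yields $M_{\hat P_1}\le M_{\hat P_2}$, provided $M_{\hat P}$ is defined as a monotone function of $\rho^\ast$. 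I expect the only mildly delicate point to be bookkeeping the dependence on $A$ versus $\hat P(0,0)$: the statement asks for $M_{\hat P}$ depending on $\hat P(0,0)$, so $A$ must be treated as a fixed ambient constant, and one should make sure the chosen formula for $M_{\hat P}$ is genuinely monotone in $\hat P(0,0)$ and not just in the pair $(\hat P(0,0),A)$ — this is immediate from the linear-inequality analysis above, so there is no real obstacle, only care in the presentation.
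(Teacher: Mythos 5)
Your proposal is correct and follows essentially the same approach as the paper: compare $\hat P$ with the constant $C=\hat P(0,0)+A$ via the monotonicity $\Pi_{\hat P}\geq\Pi_C$, use the closed form \eqref{eq:Pi2}, cancel the common positive factor, and read off a threshold $\rho^\ast<\tfrac{1}{2r_0^2}$ that is increasing in $C$ (hence in $\hat P(0,0)$ for $A$ fixed). The paper's displayed definition of $M_{\hat P}$ is exactly your $\rho^\ast$ (it just cancels one power of $1-2\rho r_0^2$ rather than the whole factor $\rho r_0^4/(1-2\rho r_0^2)$), so your construction is the same up to the harmless choice of midpoint instead of $\rho^\ast$ itself.
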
 
\begin{proof}{}
 We use $\hat{P}\leq \hat{P}(0,0)+A$ to establish that for any $z\in[0,H]$ fixed, 
$$
0\geq\Pi_{\hat{P}}(\rho,z)\geq \dfrac{\Omega^2 r_{0}^{6}(1-\rho r^{2}_{0})\rho}{2(1-2\rho r_{0}^{2})^{2}}-\dfrac{ (\hat{P}(0,0)+A) r_{0}^{4}\rho}{1-2\rho r_{0}^{2}}
$$
It is straightforward to check that
$$
M_{\hat{P}}:=\sup_{ 0\leq 2r_0^2 \rho<1} \left\lbrace  \rho \; : \;\dfrac{\Omega^2 r_{0}^{6}(1-\rho r^{2}_{0})\rho}{2(1-2\rho r_{0}^{2})}\leq  \Bigl[\hat{P}(0,0)+A\Bigr]r_{0}^{4}\rho
\right\rbrace 
$$ 
satisfies the above requirements.
\end{proof}

\begin{lemma}\label{le:boundonheight1*1}  Assume $ P_n,P: \mathcal{D}\rightarrow \Reals$   satisfy  the hypotheses in  Lemma \ref{le:boundonheight1} and are continuous. \\
\begin{enumerate}
\item[(i)] Given $z \in [0,H]$,   the set $\Arg_P(\cdot, z)$ consisting of $\rho$ minimizing $\Pi_P(\cdot, z)$ over  $[0,1/(2r_0^2))$ is non empty.
Moreover, 
\begin{equation}\label{eq:boundonheight12}
\bigcup_{0\leq z\leq H}  \Arg_P(\cdot, z)\subset [0,M_P],
\end{equation}

 where  $M_P$ is as in lemma \ref{le:boundonheight1}. \\

\item[(ii)] Suppose $\{P_n\}_{n=1}^\infty$  converges uniformly to $P$ on $\bar{\mathcal{D}}$. Then

\begin{equation}\label{eq:boundonheight2}
 2r^2_{0}\sup_{n}M_{P_n}<1.
\end{equation}
If, in addition,  $\{z_n\}_{n=1}^\infty \subset [0,H]$ converges to $z$ and we assume that  $\rho_n \in \Arg_{P_n}(\cdot, z_n)$ and that $\{\rho_n\}_{n=1}^\infty$ converges to $\rho$ then 

\begin{equation}\label{eq:boundonheight21}
\lim_{n \rightarrow \infty} \Pi_{P_n}(\rho_n, z_n) = \Pi_P(\rho, z) \qquad \text{ and }\qquad \rho \in \Arg_P(\cdot, z).
\end{equation}
In particular, for each $z \in [0,H]$ the set $\Arg_P(\cdot, z)$ is a compact subset of $\Reals.$ \\
\item[(iii)] Assume, in addition, that $P(\rho,\cdot)$ is Lipschitz and the first equation in (\ref{eq:conditionP}) holds a.e on $(0,H)$ for each $\rho $ fixed. Let $z_1, z_2 \in [0,H]$ be such that $z_1 \leq z_2$.    If $\rho_i \in \Arg_P(\cdot, z_i)$ $i=1,2$ then $\rho_1 \leq \rho_2.$ \\
\end{enumerate}
\end{lemma}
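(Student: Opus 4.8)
**Proof proposal for Lemma \ref{le:boundonheight1*1}, part (iii).**

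The plan is to show monotonicity of the argmin via a standard cyclical-monotonicity / "exchange" argument applied to the function $\rho \mapsto \Pi_P(\rho,z)$. Fix $z_1 \leq z_2$ and take $\rho_1 \in \Arg_P(\cdot,z_1)$, $\rho_2 \in \Arg_P(\cdot,z_2)$. By minimality,
\begin{equation*}
\Pi_P(\rho_1,z_1) \leq \Pi_P(\rho_2,z_1) \qquad\text{and}\qquad \Pi_P(\rho_2,z_2) \leq \Pi_P(\rho_1,z_2).
\end{equation*}
Adding these and rearranging gives
\begin{equation*}
\Pi_P(\rho_1,z_1) - \Pi_P(\rho_1,z_2) \leq \Pi_P(\rho_2,z_1) - \Pi_P(\rho_2,z_2),
\end{equation*}
i.e. the map $\rho \mapsto \Pi_P(\rho,z_1) - \Pi_P(\rho,z_2)$ takes a smaller value at $\rho_1$ than at $\rho_2$. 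Since $z_1 \leq z_2$, I will show this map is nondecreasing in $\rho$; once that is established, the inequality forces $\rho_1 \leq \rho_2$ (strictly: if $\rho_1 > \rho_2$ then the nondecreasing map would give the reverse weak inequality, and combined with the line above we could still only conclude equality of the difference-values, so I actually need the map to be \emph{strictly} increasing in $\rho$ on the relevant range, or argue separately — see the obstacle paragraph).

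The key computation is the monotonicity in $\rho$ of $\rho \mapsto \Pi_P(\rho,z_1) - \Pi_P(\rho,z_2)$. From the definition (\ref{eq:Pi1}),
\begin{equation*}
\Pi_P(\rho,z_1) - \Pi_P(\rho,z_2) = \int_0^\rho \bigl( P(s,z_2) - P(s,z_1)\bigr)\, e(s)\, ds,
\end{equation*}
since the term $\tfrac{\Omega^2 r_0^2}{2(1-2sr_0^2)}$ does not depend on $z$ and cancels. The integrand is $\bigl(P(s,z_2)-P(s,z_1)\bigr)e(s)$, which is nonnegative for every $s$: by hypothesis $P(s,\cdot)$ is Lipschitz with $\partial_z P(s,\cdot) \geq 0$ a.e. on $(0,H)$ (the first inequality in (\ref{eq:conditionP})), hence $P(s,z_2) \geq P(s,z_1)$ for $z_1 \leq z_2$, and $e(s)>0$. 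Therefore $\rho \mapsto \Pi_P(\rho,z_1)-\Pi_P(\rho,z_2)$ is nondecreasing. Feeding this back into the displayed inequality $\Pi_P(\rho_1,z_1)-\Pi_P(\rho_1,z_2) \leq \Pi_P(\rho_2,z_1)-\Pi_P(\rho_2,z_2)$ yields $\rho_1 \leq \rho_2$ provided the map is strictly increasing; otherwise one handles the plateau case as below.

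The main obstacle is the passage from the weak monotonicity of the difference function to the genuine ordering $\rho_1 \leq \rho_2$. If $P(s,z_2)=P(s,z_1)$ for $s$ in a neighborhood of the relevant values, the difference function is locally constant and the exchange inequality is vacuous. I would resolve this by noting that on such a set $\Pi_P(\cdot,z_1)$ and $\Pi_P(\cdot,z_2)$ differ by a constant, so they share the same minimizers there, and one may then simply \emph{select} $\rho_1 \leq \rho_2$ among the argmin sets — but since the statement asserts $\rho_1 \leq \rho_2$ for \emph{arbitrary} choices, I instead argue by contradiction: suppose $\rho_1 > \rho_2$; then from the two minimality inequalities and the identity above,
\begin{equation*}
0 \leq \Pi_P(\rho_2,z_1) - \Pi_P(\rho_1,z_1) \leq \Pi_P(\rho_2,z_2) - \Pi_P(\rho_1,z_2) \leq 0,
\end{equation*}
so all are equalities, meaning $\rho_1 \in \Arg_P(\cdot,z_1)$ forces $\Pi_P(\rho_2,z_1)=\Pi_P(\rho_1,z_1)$, i.e. $\rho_2 \in \Arg_P(\cdot,z_1)$ as well, and likewise $\rho_1 \in \Arg_P(\cdot,z_2)$; then $\min(\rho_1,\rho_2)=\rho_2$ already satisfies $\rho_2 \leq \rho_2$, and one is free to replace the chosen $\rho_1$ by $\rho_2$, restoring the claim — more cleanly, the statement should be read as: there exist (equivalently, the minimal/maximal selections are) ordered minimizers, and the argument above shows $\min \Arg_P(\cdot,z_1) \leq \min \Arg_P(\cdot,z_2)$ and $\max\Arg_P(\cdot,z_1) \leq \max \Arg_P(\cdot,z_2)$, which is the substantive content. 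I would therefore present the clean contradiction argument and remark that it gives the ordering for the extremal minimizers, which suffices for the later use of this lemma.
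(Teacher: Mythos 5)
Your proof is the same argument as the paper's. The paper phrases it as a twist condition: it computes $\partial^2_{zs}\Pi_P(s,z)=-e(s)\,\partial_z P(s,z)\leq 0$, writes the exchange inequality in the form
\[
0\leq \bigl(\Pi_P(\rho_2,z_1)-\Pi_P(\rho_1,z_1)\bigr)+\bigl(\Pi_P(\rho_1,z_2)-\Pi_P(\rho_2,z_2)\bigr)=-\int_{\rho_1}^{\rho_2}\!\!\int_{z_1}^{z_2}\partial^2_{zs}\Pi_P\,dz\,ds,
\]
and then asserts $\rho_1\leq\rho_2$. Your formulation via the identity $\Pi_P(\rho,z_1)-\Pi_P(\rho,z_2)=\int_0^\rho\bigl(P(s,z_2)-P(s,z_1)\bigr)e(s)\,ds$ is exactly the same quantity, written in a slightly more transparent way, and the monotonicity you deduce is the same non-strict monotonicity the paper has.

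Where you go beyond the paper is in flagging the plateau case, and you are right that there is a real issue there. Under the hypothesis used in this part of the lemma, namely the first inequality of (\ref{eq:conditionP}), $\partial_z P$ is allowed to vanish on a set of positive (even full) measure, so the exchange inequality can be an equality and the argument as written does not exclude $\rho_1>\rho_2$: it only forces $\rho_2\in\Arg_P(\cdot,z_1)$ and $\rho_1\in\Arg_P(\cdot,z_2)$. The paper's own proof says nothing about this and simply concludes $\rho_1\leq\rho_2$. Your contradiction argument correctly salvages the monotonicity of the extremal selections, $\rho^-(z_1)\leq\rho^-(z_2)$ and $\rho^+(z_1)\leq\rho^+(z_2)$. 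But be aware that this weaker reading is not what the paper uses downstream: Lemma \ref{le:boundonheight3}(iii) asserts the stronger $\rho^+(z_1)\leq\rho^-(z_2)$ for $z_1\leq z_2$, and that, not the extremal-selection monotonicity, is what drives Corollary \ref{co:uniqueheight1} and the structure of the argmin sets. The stronger statement genuinely requires either strict positivity of $\partial_z P$ on a set of full measure (which the paper does later impose in Section \ref{ sec: Regularity property of the domain}, via (\ref{eq:conditionP for regularity of the domain}), precisely to get the Lipschitz boundary), or a separate argument ruling out multiple minimizers at distinct $z$'s being interleaved. So the obstacle you identify is not just cosmetic, and your proposed remedy is slightly weaker than the claim the paper relies upon; it would be worth stating this explicitly rather than asserting that the weaker form "suffices for the later use of this lemma."
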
 
\proof{} (i) Let $z\in [0,H]$. As $\Pi_P(0,z)=0$, in light of Lemma \ref{le:boundonheight1}, minimizing $\Pi_P(\cdot,z)$ over $[0,1/(2r_0^2))$ is equivalent to minimizing $\Pi_P(\cdot,z)$ over $[0,M_P].$ We observe that $\Pi_P(\cdot,z)$ is continuous on $[0,M_P].$ Hence, it admits a minimum there and $\Arg_P(\cdot, z) \subset [0,M_P].$ This establishes  (\ref{eq:boundonheight12}).\\
(ii) The convergence property of $\left\lbrace P_n\right\rbrace_{n=1}^{\infty}   $ ensures that $\{P_n (0,0)\}_{n=1}^\infty$ is bounded above by one of its terms say  $P_{n_0}(0,0)$ or $P(0,0)$. The monotonicity result in Lemma \ref{le:boundonheight1} ensures that $ M_{P_n}\leq M_{P_{n_0}}<\frac{1}{2r^{2}_{0}}$  or $ M_{P_n}\leq M_P<\frac{1}{2r^{2}_{0}}$  for all $n\geq 1$. Thus, (\ref{eq:boundonheight2}) holds. \\
Let $\{z_n\}_{n=1}^\infty \subset [0,H]$ be a sequence converging to $z$ and assume $\rho_n \in \Arg_{P_n}(\cdot, z_n)$ and is such that $\{\rho_n\}_{n=1}^\infty$ converges to $\rho$ and let $s \in [0, 1/(2r_0^2))$. We choose $M$  such that $M_{P},\sup_{n}M_{P_n},s \leq M<\frac{1}{2r^2_{0}}$ so that $K:=[0,M] \times [0,H]$ is a compact subset of $\mathcal{D}$. We observe that  $\{\Pi_{P_n}\}_{n=1}^\infty$ converges uniformly to $\Pi_P$ on $K$. This, combined with the fact that  $\rho_n$  minimizes $\Pi_{P_n}(\cdot, z_n)$, and   $\Pi_P$ is continuous, yields
 $$
\Pi_P(\rho, z)= \lim_{n \rightarrow \infty} \Pi_{P_n}(\rho_n, z_n) \leq \lim_{n \rightarrow \infty} \Pi_{P_n}(\varrho, z_n)=\Pi_P(\varrho, z). 
$$ 
Since this holds for any $s\in [0, 1/(2r_0^2))$, we have that $\rho \in \Arg_P(\cdot, z)$.\\
(iii) For each $z \in [0,H],$ $\Pi_P(\cdot, z)$ is differentiable on $(0,1/(2r_0^2))$ and its derivative is the integrand of $\Pi_P$. As $ P(s,\cdot)$ is Lipschitz , $\partial \Pi_P /  \partial s(s,\cdot)$ is differentiable almost everywhere on $(0,H)$ and 
\begin{equation}\label{eq:boundonheight5} 
{\partial^2 \Pi_P \over \partial z \partial s}(s,z)= -e(s) {\partial P \over \partial z}(s,z)\leq 0.
\end{equation}  
 We have used the first equation in (\ref{eq:conditionP}). This means that $\Pi_P$ satisfies a twist condition (see \cite{McCann}).
Let $z_i\in [0,H]$ and $\rho_i \in \Arg_P(\cdot, z_i)$ $i=1,2.$  We use the minimality  condition on $\rho_1$, $\rho_2$ and the fact that $P(s,\cdot)$ is Lipschitz to  obtain
\begin{equation}\label{eq:boundonheight6} 
0 \leq \Bigl(\Pi_P(\rho_2, z_1)-\Pi_P(\rho_1,z_1) \Bigr) +\Bigl(\Pi_P(\rho_1,z_2)-\Pi_P(\rho_2,z_2)\Bigr) =-\int_{\rho_1}^{\rho_2} ds \int_{z_1}^{z_2} {\partial^2 \Pi_P \over \partial z \partial s}(s,z) dz.
\end{equation} 
If $z_1 <z_2,$ then we use  (\ref{eq:boundonheight5}) and (\ref{eq:boundonheight6}) to get  $\rho_1 \leq \rho_2.$ \endproof

\begin{rmk}\label{re:boundonheight2} 
 Let $z \in [0,H]$ and $h \in \Arg_P(\cdot, z).$ If $\rho>0$ then $\partial \Pi /\partial \varrho(\rho,z) = 0$ that is,  
\begin{equation}\label{eq:relationPon} 
P(\rho,z) = {\Omega^2 r_0^2 \over 2(1-2\rho r_0^2)}.\\ 
\end{equation}
\end{rmk}

Let  $P: \mathcal{D}\rightarrow \Reals$ be such that $Argmin\Pi_{P}(\cdot,z)$ is compact for each $z\in[0,H]$ . We define 
$$ 
\rho^+(z)= \max_{\rho\in\Arg_P(\cdot, z)} \rho, \qquad \rho^-(z)= \min_{\rho\in\Arg_P(\cdot, z)} \rho
$$

\begin{lemma}\label{le:boundonheight3}
Assume $P$ satisfies the hypotheses in lemma \ref{le:boundonheight1*1} and the first equation in (\ref{eq:conditionP}). Then, the following hold:
\begin{enumerate}
\item[(i)] $\rho^-$ is lower semi-continuous and $\rho^+$ is upper semi-continuous. 
\item[(ii)] $\rho^-, \rho^+$ are monotone nondecreasing. 
\item[(iii)] Let $z_1, z_2 \in [0,H]$ be such that $z_1 \leq z_2$.   Then  $\rho^+(z_1) \leq \rho^-(z_2).$  
\item[(iv)]  $\rho^-$ is left continuous and $\rho^+$ is right continuous.
\item[(v)] Let $z\in [0,H)$. If $\rho^-$ is continuous at $z$ then $\rho^-(z) = \rho^+(z)$.\\
\end{enumerate}
\end{lemma}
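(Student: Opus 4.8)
The plan is to establish the five assertions in the order (ii) and (iii), then (i), then (iv) and (v), the whole argument resting on two facts already in hand: the stability of minimizers of $\Pi_P(\cdot,z)$ under perturbation of the parameter, together with the uniform bound $\bigcup_{z\in[0,H]}\Arg_P(\cdot,z)\subset[0,M_P]$ with $2r_0^2 M_P<1$ (Lemma~\ref{le:boundonheight1*1}(i)--(ii)); and the comparison principle that $z_1<z_2$, $\rho_i\in\Arg_P(\cdot,z_i)$ forces $\rho_1\le\rho_2$ (Lemma~\ref{le:boundonheight1*1}(iii)). Items (ii) and (iii) then come for free: given $z_1<z_2$, applying Lemma~\ref{le:boundonheight1*1}(iii) with $\rho_1:=\rho^+(z_1)$ and $\rho_2:=\rho^-(z_2)$ gives $\rho^+(z_1)\le\rho^-(z_2)$, which is (iii), and since $\rho^-\le\rho^+$ pointwise by definition this also yields $\rho^-(z_1)\le\rho^-(z_2)$ and $\rho^+(z_1)\le\rho^+(z_2)$, which is (ii).

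For (i) I would argue that $\rho^+$ is upper semicontinuous by a compactness argument (lower semicontinuity of $\rho^-$ being symmetric). Fix $z_0\in[0,H]$, put $L:=\limsup_{z\to z_0}\rho^+(z)$, which is finite since $0\le\rho^+\le M_P$ by Lemma~\ref{le:boundonheight1*1}(i), and choose $z_n\to z_0$ with $\rho^+(z_n)\to L$. Because $\rho^+(z_n)\in\Arg_P(\cdot,z_n)$, Lemma~\ref{le:boundonheight1*1}(ii), applied with the constant sequence $P_n\equiv P$, shows that the limit $L$ lies in $\Arg_P(\cdot,z_0)$; hence $L\le\max\Arg_P(\cdot,z_0)=\rho^+(z_0)$, i.e.\ $\limsup_{z\to z_0}\rho^+(z)\le\rho^+(z_0)$. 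Replacing $\max$ by $\min$ and $\limsup$ by $\liminf$ gives $\liminf_{z\to z_0}\rho^-(z)\ge\rho^-(z_0)$.

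Items (iv) and (v) are then soft. Since $\rho^-$ and $\rho^+$ are nondecreasing by (ii), one-sided limits exist everywhere. For $z_0\in(0,H]$, monotonicity gives $\lim_{z\uparrow z_0}\rho^-(z)\le\rho^-(z_0)$, while lower semicontinuity from (i) gives $\lim_{z\uparrow z_0}\rho^-(z)=\liminf_{z\uparrow z_0}\rho^-(z)\ge\liminf_{z\to z_0}\rho^-(z)\ge\rho^-(z_0)$; hence $\rho^-$ is left continuous, and symmetrically $\rho^+$ is right continuous, which is (iv). For (v), let $z\in[0,H)$ with $\rho^-$ continuous at $z$; for every $z'\in(z,H]$ assertion (iii) gives $\rho^+(z)\le\rho^-(z')$, and letting $z'\downarrow z$ and using continuity of $\rho^-$ at $z$ yields $\rho^+(z)\le\rho^-(z)$, so $\rho^-(z)=\rho^+(z)$.

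The only step carrying genuine analytic content is (i): it hinges on the a priori bound $\bigcup_{z\in[0,H]}\Arg_P(\cdot,z)\subset[0,M_P]$ with $2r_0^2M_P<1$, which keeps all relevant minimizers inside a fixed compact subset of $\mathcal D$, bounded away from the singular locus $\{2r_0^2 s=1\}$, so that the stability statement Lemma~\ref{le:boundonheight1*1}(ii) can be invoked; everything else is bookkeeping with monotonicity and semicontinuity. I would also take care that the comparison principle, hence (iii), is used only for $z_1<z_2$, which is exactly the form needed in the proof of (v).
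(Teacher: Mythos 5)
Your argument is correct and follows essentially the same route as the paper: (i) from the stability statement in Lemma~\ref{le:boundonheight1*1}(ii), (ii)--(iii) from the comparison principle in Lemma~\ref{le:boundonheight1*1}(iii), (iv) from monotonicity plus semicontinuity, and (v) by sending $z'\downarrow z$ in (iii). The only cosmetic difference is that you argue (v) directly whereas the paper argues by contraposition, and you correctly flag that the comparison principle, and hence (iii), only carries content for $z_1<z_2$.
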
  

\proof{} (i) is a consequence of the continuity property in Lemma \ref{le:boundonheight1*1} (ii).
(ii) and (iii) come from Lemma \ref{le:boundonheight1*1} (iii). We use the fact that $\rho^-$ is monotone  nondecreasing and lower semi-continuous to obtain that  $ \rho^-$ is left continuous. A similar argument gives that $\rho^+$ is right continuous. Let $z_0\in [0,H)$ such that $\rho^-(z_0) <\rho^+(z_0)$. We note that, as $\rho^-$ is monotone nondecreasing, it has a right limit.  For $\delta>0$ small enough, we use Part (iii) to obtain that $\rho^+(z_0)\leq \rho^-(z_0+\delta)$ and so
$$\rho^-(z_0) <\rho^+(z_0)\leq \lim_{\delta\rightarrow 0^+}\rho^-(z_0+\delta).$$
This implies that $\rho^-$ is discontinuous at $z_0$ which proved (v).
\endproof

\begin{corollary}\label{co:uniqueheight1} There exists a countable set $\calN \subset [0,H]$ such that for every $z \not \in \calN$, $\Arg_P(\cdot, z)$ has a unique element.\\ 
\end{corollary}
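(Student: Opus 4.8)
The plan is to exploit the monotonicity statements in Lemma \ref{le:boundonheight3}. Recall that $\rho^-$ and $\rho^+$ are both monotone nondecreasing functions on $[0,H]$, and that by Lemma \ref{le:boundonheight3}(iii) we actually have $\rho^+(z_1) \leq \rho^-(z_2)$ whenever $z_1 < z_2$, so in particular $\rho^- \leq \rho^+$ pointwise. The key observation is that $\Arg_P(\cdot, z)$ is a single point precisely when $\rho^-(z) = \rho^+(z)$: indeed, by Lemma \ref{le:boundonheight3}(v), if $\rho^-$ is continuous at $z$ then $\rho^-(z) = \rho^+(z)$, and conversely if $\rho^-(z) < \rho^+(z)$ then these are distinct members of the argmin set.

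First I would invoke the classical fact that a monotone function on an interval has at most countably many points of discontinuity. Apply this to $\rho^-$: let $\calN$ be the set of discontinuity points of $\rho^-$ in $[0,H]$; then $\calN$ is countable. For any $z \in [0,H) \setminus \calN$, $\rho^-$ is continuous at $z$, so Lemma \ref{le:boundonheight3}(v) gives $\rho^-(z) = \rho^+(z)$, whence $\Arg_P(\cdot, z) = \{\rho^-(z)\}$ is a singleton. The only point possibly not covered is $z = H$, which we can simply adjoin to $\calN$ (keeping it countable) if one wants the statement for all $z \notin \calN$; alternatively one notes $\rho^-(H) = \rho^+(H)$ would need a separate one-line check, but adjoining $H$ to $\calN$ is harmless.

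A cleaner route avoiding the endpoint issue: take $\calN$ to be the union of the discontinuity set of $\rho^-$ and the discontinuity set of $\rho^+$, both countable since both functions are monotone. For $z \notin \calN$, both $\rho^-$ and $\rho^+$ are continuous at $z$; using Lemma \ref{le:boundonheight3}(iii) in the form $\rho^+(z') \leq \rho^-(z)$ for $z' < z$ and $\rho^+(z) \leq \rho^-(z'')$ for $z'' > z$, and letting $z' \uparrow z$, $z'' \downarrow z$ using continuity, sandwiches $\rho^-(z) \leq \rho^+(z) \leq \rho^-(z)$, so they coincide and $\Arg_P(\cdot, z)$ is the single point $\rho^-(z) = \rho^+(z)$ (using also that $\Arg_P(\cdot, z) \subset [\rho^-(z), \rho^+(z)]$, which is immediate from the definitions of $\rho^\pm$).

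There is essentially no obstacle here; the statement is a routine consequence of the structural results already established. The only thing to be careful about is making sure the set $\calN$ is chosen so that the argument genuinely applies at every $z \notin \calN$ — in particular handling the right endpoint $z = H$ — which is why I would prefer the second formulation that puts the discontinuity sets of \emph{both} $\rho^-$ and $\rho^+$ into $\calN$ and then derives equality by a direct squeezing argument rather than citing part (v), thereby sidestepping the $z \in [0,H)$ restriction in that part's hypothesis.
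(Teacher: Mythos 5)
Your proof is correct and follows the route the paper intends: since $\rho^-$ is monotone nondecreasing it has at most countably many discontinuities, and Lemma \ref{le:boundonheight3}(v) together with compactness of $\Arg_P(\cdot,z)$ converts continuity of $\rho^-$ at $z$ into $\Arg_P(\cdot,z)$ being the singleton $\{\rho^-(z)\}$. The paper gives no proof, treating the corollary as immediate after Lemma \ref{le:boundonheight3}, so your first formulation is exactly the intended argument and your second (sandwiching via part (iii) and continuity of both $\rho^\pm$) is an equivalent variant.
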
 

\begin{rmk}\label{re:huniqueinj} 
If  $P$ is Lipschitz and satisfies (\ref{eq:conditionP}) then $P$ satisfies the hypotheses in Lemma \ref{le:boundonheight1}. The compactness result in Lemma \ref{le:boundonheight1*1}(ii) combined with the definition of $\rho^-$ ensure that $\rho^-$ is  a minimizer  in the second equation of (\ref{eq:forJ}). Note that by (\ref{eq:boundonheight12}), 
\begin{equation}\label{eq:huniqueinj1} 
0\leq \rho^-(z)\leq M_P<\frac{1}{2r^2_0}
\end{equation}
for $z\in[0,H]$. Let $\left\lbrace P_n\right\rbrace^{\infty}_{n=1}\subset C(\mathcal{D}) $ be a sequence of Lipschitz functions uniformly convergent on $\mathcal{D}$ and satisfying (\ref{eq:conditionP}). By (\ref{eq:huniqueinj1}) and (\ref{eq:boundonheight2}), 
\begin{equation}\label{eq:huniqueinj2} 
0\leq \rho^-_n(z)\leq \sup_n M_{P_n}<\frac{1}{2r^2_0}
\end{equation}
for  $z\in[0,H]$ and all $n\geq 1$.
\endproof
\end{rmk} 

\subsection{Existence of a minimizer for the functional $I$}\hfill

\begin{rmk}\label{re:LipschitzPPsi} 
Let $( P,\Psi)\in \calU_0$. We recall that $P$ and $\Psi$ are Lipschitz and $P$ satisfies (\ref{eq:conditionP}). If in addition $P(0,0)=0$ then, in view of (\ref{eq:subgradients})
\begin{equation}\label{eq:LipschitzPPsi1} 
 |P(p)|\leq R_0(\frac{1}{2r_0^2}+H)=:R_0H_0.
\end{equation}
We note that $ 0\leq \langle p, q \rangle \leq  R_0H_0$ for $q\in\Delta$ and $p\in\mathcal{D}$. This, combined with the second equation in (\ref{eq:LegendrepsiP}) and (\ref{eq:LipschitzPPsi1}) yields that $\Psi $ is bounded on $\Delta$.  More precisely
$$-2 R_0H_0<-R_0H_0\leq \Psi(q) \leq 2H_0 R_0$$ 
$\text{ for } q \in \Delta$. As a consequence, 
$$
\Bigl| \int_\Delta \Bigl({\Upsilon \over 2r_0^2} -\Omega \sqrt \Upsilon- \Psi\Bigr)\sigma(dq) \Bigr| \leq 
 \int_\Delta \Bigl({\Upsilon \over 2r_0^2} +\Omega \sqrt \Upsilon\Bigr) \sigma(dq)+ 2 H_0R_0=: C(R_0)+ 2 H_0R_0 <\infty. 
$$
\end{rmk}

\begin{lemma} \label{le:aprioribound1} Let $C_0\in\mathbb{R}$. There exists $ C_1\in\mathbb{R} $ satisfying the following: whenever $(P, \Psi) \in \calU_0$ with $P(0,0)=0,$ $\lambda \in \Reals$ and $\sigma\in\calP_{[R_0]}$ are such that $-C_0\leq J[\sigma](P-\lambda,\Psi +\lambda) $ then $|\lambda| \leq C_1.$ \\
\end{lemma}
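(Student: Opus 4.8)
The functional in question is
\[
J[\sigma](P-\lambda,\Psi+\lambda)=\int_{\Delta}\Bigl({\Upsilon\over 2r_0^2}-\Omega\sqrt\Upsilon-\Psi-\lambda\Bigr)\sigma(dq)+\inf_{\rho\in\calH_0}\int_0^H\Pi_{P-\lambda}(\rho(z),z)\,dz.
\]
The plan is to extract, from the hypothesis $-C_0\le J[\sigma](P-\lambda,\Psi+\lambda)$, one-sided bounds on $\lambda$ by controlling each of the two terms separately. For the first term, Remark \ref{re:LipschitzPPsi} already gives that $\bigl|\int_\Delta({\Upsilon\over 2r_0^2}-\Omega\sqrt\Upsilon-\Psi)\sigma(dq)\bigr|\le C(R_0)+2H_0R_0$ uniformly over $(P,\Psi)\in\calU_0$ with $P(0,0)=0$ and over $\sigma\in\calP_{[R_0]}$, using that $\Psi$ is then bounded on $\Delta$. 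Since $\sigma$ is a probability measure, the $\lambda$-dependence of the first term is simply the additive constant $-\lambda$. So the first term equals $-\lambda+(\text{something bounded by }C(R_0)+2H_0R_0)$.

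Next I would bound the second term $j(P-\lambda)=\inf_{\rho}\int_0^H\Pi_{P-\lambda}(\rho(z),z)\,dz$ in terms of $\lambda$. From the definition \eqref{eq:Pi1}, replacing $P$ by $P-\lambda$ shifts the integrand by $+\lambda e(s)$, so $\Pi_{P-\lambda}(\rho,z)=\Pi_P(\rho,z)+\lambda\int_0^\rho e(s)\,ds$. For the \emph{upper} bound on $j(P-\lambda)$: take the admissible competitor $\rho\equiv 0$, which gives $\Pi_{P-\lambda}(0,z)=0$, hence $j(P-\lambda)\le 0$ always. Combining with the first-term estimate, $-C_0\le J\le -\lambda + C(R_0)+2H_0R_0 + 0$, which yields the upper bound $\lambda\le C_0+C(R_0)+2H_0R_0=:C_1^+$. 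For the \emph{lower} bound on $\lambda$ we need a lower bound on $j(P-\lambda)$ that degrades at most linearly in $\lambda$. When $\lambda\ge 0$ the monotonicity $\Pi_{P-\lambda}\ge\Pi_P$ (since $P-\lambda\le P$) gives $j(P-\lambda)\ge j(P)$, and $j(P)$ is bounded below independently of $P$ using \eqref{eq:Pi2} and the bound $|P|\le R_0H_0$ from \eqref{eq:LipschitzPPsi1} together with Lemma \ref{le:boundonheight1} (the minimizing $\rho$ lies in $[0,M_P]$ with $2r_0^2M_P<1$, on which the explicit integrand is bounded). When $\lambda<0$ we instead estimate directly: on the relevant range $\rho\in[0,M]$ with $M<1/(2r_0^2)$ one has $\bigl|\lambda\int_0^\rho e(s)\,ds\bigr|\le|\lambda|\cdot M e(M)$, so $\Pi_{P-\lambda}(\rho,z)\ge\Pi_P(\rho,z)-|\lambda|Me(M)$, giving $j(P-\lambda)\ge j(P)-|\lambda|Me(M)H$. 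Then $-C_0\le -\lambda+C(R_0)+2H_0R_0+j(P)-|\lambda|Me(M)H$ and, for $\lambda<0$, $-\lambda=|\lambda|$, so $(1-Me(M)H)|\lambda|$...

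The one genuine subtlety — and the step I expect to be the main obstacle — is that the naive competitor-plus-monotonicity argument for the lower bound on $\lambda$ can break down if the coefficient of $|\lambda|$ degenerates: subtracting $|\lambda|Me(M)H$ from $j(P)$ is a weaker-than-linear gain only if $Me(M)H<1$, which need not hold. The clean fix is to avoid the range $[0,M_P]$ entirely for the lower bound and argue via the first term alone: since $j(P-\lambda)\le 0$ for all $\lambda$ is only an upper bound, for the lower bound one should instead note that the infimum defining $j(P-\lambda)$ is attained at $\rho^-$ (Remark \ref{re:huniqueinj}) with $\rho^-(z)\le M_{P-\lambda}$, and that $M_{P-\lambda}$ is controlled by $(P-\lambda)(0,0)=-\lambda$ via the monotonicity in Lemma \ref{le:boundonheight1}; when $\lambda\to-\infty$, $M_{P-\lambda}\to 1/(2r_0^2)$ and $\Pi_{P-\lambda}$ on the minimizer is large and negative, consistent with $j(P-\lambda)\to-\infty$, which would \emph{not} contradict $-C_0\le J$. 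So the lower bound on $\lambda$ cannot come from $j$; it must come from the first term, which requires $-\lambda$ bounded \emph{above}, i.e. $\lambda$ bounded below — but that is automatic only once we know $j(P-\lambda)$ does not blow up to $+\infty$, which it cannot since $j\le 0$. Hence: from $-C_0\le -\lambda + (C(R_0)+2H_0R_0) + j(P-\lambda)$ and $j(P-\lambda)\le 0$ we get $\lambda\le C_0+C(R_0)+2H_0R_0$; and combining $-C_0\le J$ with the matching trivial upper bound $J[\sigma](P-\lambda,\Psi+\lambda)\le$ (first term) $\le -\lambda+C(R_0)+2H_0R_0$ is what we already used. For the lower bound on $\lambda$, run the symmetric argument: $J\le -\lambda+C(R_0)+2H_0R_0+j(P-\lambda)$ and, bounding $j(P-\lambda)$ \emph{below} by $j(P)-\bigl(\text{linear in }\lambda^-\bigr)$ using that on $\rho\in[0,M_{P-\lambda}]$ one may further restrict to $\rho\in[0,M_P]$ (a \emph{smaller} set when $\lambda\ge0$, and for $\lambda<0$ use the explicit $\Pi_{P-\lambda}$ minimization against the competitor $\rho=M_P$), we obtain $-C_0\le J$ forcing $\lambda\ge -C_1^-$. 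Setting $C_1=\max(C_1^+,C_1^-)$ finishes it. I would present the two sides cleanly as: (a) $\lambda$ bounded above from $j\le 0$ and Remark \ref{re:LipschitzPPsi}; (b) $\lambda$ bounded below by choosing in $j(P-\lambda)$ the test function $\rho\equiv \rho_0$ for a fixed $\rho_0$ with $2r_0^2\rho_0<1$, giving $j(P-\lambda)\le \int_0^H\Pi_{P-\lambda}(\rho_0,z)\,dz = \int_0^H\Pi_P(\rho_0,z)\,dz+\lambda H\int_0^{\rho_0}e(s)\,ds$, and this last quantity, inserted into $-C_0\le J\le -\lambda+C(R_0)+2H_0R_0+\int_0^H\Pi_P(\rho_0,z)\,dz+\lambda H\int_0^{\rho_0}e(s)\,ds$, yields a lower bound on $\lambda$ provided $H\int_0^{\rho_0}e(s)\,ds>1$, which can be arranged by taking $\rho_0$ close enough to $1/(2r_0^2)$ since $e$ blows up there and $\int_0^{\rho_0}e$ is unbounded. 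That choice of $\rho_0$ is the one real idea; everything else is bookkeeping with the explicit formula \eqref{eq:Pi2} and the uniform bound $|P|\le R_0H_0$.
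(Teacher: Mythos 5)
Your proposal is correct and, in its final form (parts (a) and (b) at the end), is essentially the paper's own argument: split $J$ into the $\sigma$-integral controlled uniformly by Remark~\ref{re:LipschitzPPsi}, then test $j(P-\lambda)$ with a constant $\rho\equiv\rho_0$, taking $\rho_0=0$ to bound $\lambda$ from above and $\rho_0$ close enough to $1/(2r_0^2)$ (so that the factor of $\lambda$ becomes negative, since $\int_0^{\rho_0}e$ blows up) to bound $\lambda$ from below; the paper expresses this by first replacing $P-\lambda$ by its constant lower bound $-H_0R_0-\lambda$ and invoking the explicit formula \eqref{eq:Pi2}, but the coefficient $r_0^4\bar\rho_0/(1-2\bar\rho_0 r_0^2)$ appearing there is exactly your $\int_0^{\rho_0}e(s)\,ds$. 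The exploratory middle section of your writeup contains some dead ends (the case split on the sign of $\lambda$, and the claim that the lower bound ``cannot come from $j$'') that are superseded by the clean final argument and should simply be deleted; also make sure to record explicitly that $\int_0^H\Pi_P(\rho_0,z)\,dz$ is bounded uniformly over $(P,\Psi)\in\calU_0$ with $P(0,0)=0$ via $|P|\le R_0H_0$, which is the one remaining uniformity needed.
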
 

\proof{} 
By (\ref{eq:LipschitzPPsi1}) $-R_0H_0<P(p)$ for $p\in \Delta_{r_0}$ so that  $$\Pi_{P-\lambda}\leq \Pi_{-H_0R_0-\lambda}. $$ 
Therefore, if $J[\sigma](\Psi +\lambda ,P-\lambda ) \geq -C_0$ then 
 
$$ 
-C_0 \leq -\lambda +  \int_\Delta \Bigl({\Upsilon \over 2r_0^2} -\Omega \sqrt \Upsilon- \Psi\Bigr) \sigma(dq) + \int_0^H \Pi_{-H_0R_0-\lambda}(\rho(z),z)dz
$$ 
for all $ \rho \in \calH_0.$ Hence, using $C(R_0)$ as  given  in Remark \ref{re:LipschitzPPsi}  and setting $\rho$ to be a constant function $\bar{\rho}_0$, we obtain 
$$ 
-C_0 \leq -\lambda + C(R_0)+ 2 H_0R_0+ \int_0^H \Pi_{-H_0R_0-\lambda}(\bar\rho_0,z)dz.
$$ 
 We use (\ref{eq:Pi2} )  to get 
$$-\bar C_0:=-C_0 -C(R_0)-2 H_0R_0\leq -\lambda + \dfrac{\Omega^2 r_{0}^{6}(1-\bar{\rho}_0 r^{2}_{0})\bar{\rho}_0H}{2(1-2\bar{\rho}_0r_{0}^{2})^{2}}-\dfrac{ (-H_0R_0-\lambda)H r_{0}^{4}\bar{\rho}_0}{1-2\bar{\rho}_0 r_{0}^{2}}.$$
 We rewrite this as

\begin{equation}\label{eq:bound4onP} 
\lambda \Bigl(1- H {r_0^4 \bar{\rho}_0 \over 1-2 \bar{\rho}_0 r_0^2} \Bigr) \leq \bar C_0+    H {r_0^4H_0R_0 \bar{\rho}_0 \over 1-2 \bar{\rho}_0 r_0^2} + H\dfrac{\Omega^2 r_{0}^{6}(1-\bar{\rho}_0 r^{2}_{0})\bar{\rho}_0}{2(1-2\bar{\rho}_0r_{0}^{2})^{2}}.
\end{equation}

Set $\bar{\rho}_0=0$ in (\ref{eq:bound4onP}). Then, 
\begin{equation}\label{eq:upperbound for lambda}
 \lambda\leq \bar C_0.
\end{equation}

When the constant value of $\bar \rho_0$ is chosen  in $[0,\frac{1}{2r_0^2})$ ( for instance close enough to $\frac{1}{2r_0^2}$ ) so that the factor of $\lambda$ in (\ref{eq:bound4onP} ) is negative then there exists a constant $\bar C_1$ such that 
\begin{equation}\label{eq:lowerbound for lambda}
 \lambda \geq \bar C_1.
\end{equation}

We combine (\ref{eq:upperbound for lambda}) and (\ref{eq:lowerbound for lambda}) to get the result.
 \endproof

\begin{lemma} \label{le:conv J(P)}

\begin{enumerate}
\item[(i)] Let $P\in  C(\bar{\mathcal{D}})$ be a Lipschitz function satisfying (\ref{eq:conditionP}). Then, $\rho^-$ is  the unique  minimizer in (\ref{eq:forJ}) ( up to a set of zero Lebesgue measure). \\

\item[(ii)]  Assume that $\left\lbrace P_n\right\rbrace^{\infty} _{n=1}\subset C(\bar{\mathcal{D}})$ is a sequence of Lipschitz functions satisfying (\ref{eq:conditionP}) such that $\left\lbrace P_n\right\rbrace^{\infty} _{n=1}$ converges uniformly to $P$.
Then 
\begin{equation}
j(P_n) \quad \text { converges to} \quad   j(P).
\end{equation}

\end{enumerate}

\end{lemma}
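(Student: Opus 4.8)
For part (i), the plan is to combine the twist-condition machinery already established in Lemmas \ref{le:boundonheight1*1}(iii) and \ref{le:boundonheight3}. Since $P$ is Lipschitz and satisfies \eqref{eq:conditionP}, Remark \ref{re:huniqueinj} tells us that $\rho^-$ is a minimizer in the second equation of \eqref{eq:forJ}, i.e.\ $j(P)=\int_0^H \Pi_P(\rho^-(z),z)\,dz$. For uniqueness, I would invoke Corollary \ref{co:uniqueheight1}: there is a countable (hence Lebesgue-null) set $\calN\subset[0,H]$ off which $\Arg_P(\cdot,z)$ is a singleton. Any competitor $\rho\in\calH_0$ that minimizes $\int_0^H\Pi_P(\rho(z),z)\,dz$ must satisfy $\rho(z)\in\Arg_P(\cdot,z)$ for a.e.\ $z$ (otherwise $\Pi_P(\rho(z),z)>\min_\varrho\Pi_P(\varrho,z)=\Pi_P(\rho^-(z),z)$ on a positive-measure set, strictly lowering the integral — here one uses that $\rho^-$ is itself admissible by the bound \eqref{eq:huniqueinj1}, which keeps it bounded away from $\frac1{2r_0^2}$, and measurable by Lemma \ref{le:boundonheight3}(i)). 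Hence $\rho(z)=\rho^-(z)$ for a.e.\ $z\notin\calN$, so for a.e.\ $z\in[0,H]$, which gives uniqueness up to a null set.

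For part (ii), the plan is a two-sided estimate on $j(P_n)-j(P)$ using uniform convergence $P_n\to P$. Fix $M<\frac1{2r_0^2}$ with $M_P,\sup_n M_{P_n}\le M$, available from \eqref{eq:boundonheight2} in Lemma \ref{le:boundonheight1*1}(ii); then on the compact set $K=[0,M]\times[0,H]$ the functions $\Pi_{P_n}$ converge uniformly to $\Pi_P$, because $|\Pi_{P_n}(\rho,z)-\Pi_P(\rho,z)|\le \|P_n-P\|_\infty\int_0^M e(s)\,ds$. Denote $\e_n:=\|P_n-P\|_\infty\int_0^M e(s)\,ds\to 0$. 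For one direction: by Lemma \ref{le:boundonheight1}, $\Arg_P(\cdot,z)\subset[0,M_P]\subset[0,M]$ for every $z$, so using $\rho^-_P$ (the minimizer for $P$) as a competitor for $P_n$,
\[
j(P_n)\le \int_0^H \Pi_{P_n}(\rho^-_P(z),z)\,dz \le \int_0^H \Pi_P(\rho^-_P(z),z)\,dz + H\e_n = j(P)+H\e_n.
\]
Symmetrically, $\rho^-_{P_n}(z)\in[0,M_{P_n}]\subset[0,M]$ is a legitimate competitor for $P$, giving $j(P)\le j(P_n)+H\e_n$. Combining, $|j(P_n)-j(P)|\le H\e_n\to 0$.

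The main obstacle is the measurability/admissibility bookkeeping in part (i): one must be sure that the candidate minimizer $\rho^-$ genuinely lies in $\calH_0$ — i.e.\ is Borel measurable and takes values in $[0,\frac1{2r_0^2})$ — which is exactly what Lemma \ref{le:boundonheight3}(i) (lower semicontinuity of $\rho^-$) and the uniform bound \eqref{eq:huniqueinj1} supply, and one must justify that no minimizing $\rho$ can escape $\Arg_P(\cdot,z)$ on a positive-measure set. Both reduce to standard selection-and-truncation arguments once the twist structure from \eqref{eq:boundonheight5} is in hand; part (ii) is then routine. A minor point to record is that $\int_0^M e(s)\,ds<\infty$ since $M<\frac1{2r_0^2}$, so the Lipschitz dependence of $\Pi_P$ on $P$ is genuinely uniform on $K$.
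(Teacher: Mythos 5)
Your part (i) follows the paper's line essentially verbatim — Remark \ref{re:huniqueinj} for existence, Corollary \ref{co:uniqueheight1} for uniqueness, with the measurability of $\rho^-$ supplied by Lemma \ref{le:boundonheight3}(i) and the admissibility bound by (\ref{eq:huniqueinj1}) — so there is nothing to add there.

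For part (ii) you take a genuinely different and noticeably more elementary route. The paper passes through Helly's selection theorem on the minimizers $\rho^-_n$, identifies the pointwise limit with $\rho^-$ by combining (\ref{eq:huniqueinj2}), (\ref{eq:boundonheight21}) and the uniqueness from part (i), and then closes with dominated convergence. You instead prove directly that $j$ is Lipschitz with respect to the sup-norm on the class of $P$'s under consideration: picking $M<\frac1{2r_0^2}$ dominating $M_P$ and $\sup_n M_{P_n}$ (which is available thanks to (\ref{eq:boundonheight2}) and the bound from Lemma \ref{le:boundonheight1}), you feed $\rho^-_P$ as a competitor for $P_n$ and $\rho^-_{P_n}$ as a competitor for $P$, each time paying only $H\|P_n-P\|_\infty\int_0^M e(s)\,ds$ because the relevant minimizers stay in $[0,M]$ where $e$ is integrable. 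This sidesteps compactness entirely and gives a quantitative estimate $|j(P_n)-j(P)|\le H\|P_n-P\|_\infty\int_0^M e(s)\,ds$ rather than mere convergence. Both arguments are correct; yours is shorter and yields more, while the paper's version is closer in spirit to the stability machinery (Helly plus (\ref{eq:boundonheight21})) that it continues to use later in Proposition \ref{Proposition MA}(iii).
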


\proof{} The function $\rho^-$ is  a minimizer in (\ref{eq:forJ}) as stated in remark \ref{re:huniqueinj}.  Corollary \ref{co:uniqueheight1}  ensures the uniqueness  which proves (i). We note that as $\left\lbrace P_n\right\rbrace^{\infty} _{n=1} $ is uniformly Lipschitz and converges uniformly to $P$,  we have that $P$ is Lipschitz. Let $\rho^-_n$ be the minimizer in the second equation of (\ref{eq:forJ}) when $P$ is replaced by $P_n$. By Helly's theorem there exists a subsequence of $\left\lbrace \rho^-_n \right\rbrace^{\infty} _{n=1}$ that we denote again by $\left\lbrace \rho^-_n \right\rbrace^{\infty} _{n=1}$  and $\rho$ monotone nondecreasing such that $\left\lbrace \rho^-_n\right\rbrace^{\infty} _{n=1} $  converges to $\rho$ pointwise. It is straightforward  that $\left\lbrace \Pi_{P_n}\right\rbrace^{\infty} _{n=1}$ converges uniformly to $\Pi_P$ on compact subsets of $\Delta$. This, in view of  (\ref{eq:huniqueinj2}), (\ref{eq:boundonheight21}) and Part (i), yields $\rho^-=\rho\quad a.e.$ In light of  (\ref{eq:huniqueinj2}) again,  we next use the fact that $\left\lbrace P_n\right\rbrace^{\infty} _{n=1}$ is uniformly bounded to obtain  $\sup_{z\in[0,H]}\sup_n|\Pi_{P_n}(\rho^-_n(z),z)|<\infty$. A simple Lebesgue dominated convergence theorem yields (ii). \endproof

\subsection{A step towards the Proof of the main Theorem}\hfill

\begin{prop}\label{Proposition MA} 
Let $\sigma\in \calP_{[R_0]}$.
\begin{enumerate}
 \item[(i)] The set of maximizers $\calM$ of  $J[\sigma]$   over $\calU$ is such that $\calM\cap\calU_{0}$  is non empty ($\calU_{0}$ is defined by (\ref{eq:LegendrepsiP})).
 \item[(ii)] $I(\gamma,\rho)\geq J[\sigma](P,\Psi)$ for all $( P,\Psi)\in \calU_0$  and all  $(\gamma, \rho) \in\frak{L}_{\sigma}$. The equality holds if and only if  $\id \times \nabla P$ pushes $e(s)\chi_{D_{\rho}}(s,z)\calL^2$ forward to $\gamma$  
  and $\rho(z)$ minimizes $\Pi_P(.,z)$ for almost every $z\in [0,H]$. If, in addition, $\sigma$ is absolutely continuous with respect to the  Lebesgue measure, then the first condition for equality could be replaced by $\nabla\Psi\times\id $ pushes $\sigma$ forward to $\gamma$.
\item[(iii)] $I$ has a unique minimizer $(\gamma_0,\rho_0)$ over $\frak{L}_{\sigma}$. Moreover,  if $(P_0,\Psi_0)\in \calU_0$ maximizes $J[\sigma]$ on $\calU$ then $J[\sigma](P_0,\Psi_0)=I(\gamma_0,\rho_0) $ and $\rho_0$ is monotone non decreasing on $[0,H]$  satisfying (\ref{eq:huniqueinj1}) and 
\begin{equation} \label{eq 1: prop: P on the boundary}
2(1-2r^2_{0} \rho_0(z))P(\rho_0(z),z)=r^2_0\Omega^{2} \text{ on } \{\rho_0>0\}
\end{equation}
If, in addition, $\sigma$  is absolutely continuous with respect to the Lebesgue measure then $\nabla \Psi_0 \times \id$ pushes $\sigma$  forward to $\gamma_0$ and 
\begin{equation} \label{eq 1: inversibility of the gradient of P}
\nabla \Psi_0\circ\nabla P_0= \id \quad  e(s)\chi_{D_{\rho_0}}(s,z)\calL^2-\text{almost everywhere}  \quad \nabla P_0\circ\nabla \Psi_0= \id \quad \sigma-\text{almost everywhere}. 
\end{equation}
 \item[(iv)]   $J[\sigma]$ has a unique maximizer $(P_0,\Psi_0)$ on $\calU_0$ in the sense that if $J[\sigma](\Psi_0,P_0)= J[\sigma](\Psi_1,P_1)$ and $(\Psi_1,P_1)\in\calU_0$ then $P_1=P_0 \quad\text{on}\quad  D_{\rho_0} $ and $\Psi_1=\Psi_0\quad \text{on}\quad \Delta$.\\
\end{enumerate}
\end{prop}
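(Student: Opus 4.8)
The plan is to prove the four assertions of Proposition \ref{Proposition MA} in the order (ii), (i), (iii), (iv), since (ii) is a weak-duality inequality that underpins everything, (i) produces a maximizer by a compactness/normalization argument (invoking Lemma \ref{le:aprioribound1}), and then (iii)–(iv) extract uniqueness on the primal and dual sides from the equality case in (ii).

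\textbf{Step 1: Weak duality (ii).} For $(P,\Psi)\in\calU_0$ and $(\rho,\gamma)\in\frak{L}_\sigma$, I would start from the defining inequality $P(p)+\Psi(q)\geq\langle p,q\rangle$ on $\mathcal{D}\times\Reals_+^2$ and integrate it against $\gamma\in\Gamma(e(s)\chi_{D_\rho}\calL^2,\sigma)$. Using the marginals of $\gamma$, the cross term $-\int\langle p,q\rangle\,d\gamma$ in $I(\rho,\gamma)$ is bounded below by $-\int P\,d(e(s)\chi_{D_\rho}\calL^2)-\int\Psi\,d\sigma$. Collecting the remaining terms, $\int(\Upsilon/2r_0^2-\Omega\sqrt\Upsilon)\,d\sigma$ lands in $J[\sigma]$, and the $s$-integral $\int_{D_\rho}\big(\Omega^2r_0^2/(2(1-2r_0^2s))-P(s,z)\big)e(s)\,ds\,dz=\int_0^H\Pi_P(\rho(z),z)\,dz\geq j(P)$ by definition of $j$ as the infimum over $\calH_0$. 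This gives $I(\rho,\gamma)\geq J[\sigma](P,\Psi)$. Equality forces (a) $P(p)+\Psi(q)=\langle p,q\rangle$ $\gamma$-a.e., which by convex duality means $q\in\partial_\cdot P(p)$, i.e. $q=\nabla P(p)$ wherever $P$ is differentiable — so $\gamma=(\id\times\nabla P)_\#(e(s)\chi_{D_\rho}\calL^2)$; and (b) $\int_0^H\Pi_P(\rho(z),z)\,dz=j(P)$, i.e. $\rho(z)\in\Arg_P(\cdot,z)$ a.e. When $\sigma\ll\calL^2$, the symmetric statement $\gamma=(\nabla\Psi\times\id)_\#\sigma$ follows since then $\sigma$ is concentrated where $\Psi$ is differentiable and $\nabla\Psi\circ\nabla P=\id$ on the relevant set.

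\textbf{Step 2: Existence of a maximizer in $\calU_0$ (i).} Given any maximizing sequence $(P_n,\Psi_n)\in\calU$, I would first replace each pair by its ``Legendre double'' $(\Psi_n^{\ast\ast\text{-type}})$, i.e. pass to $\calU_0$: replacing $P$ by $\Psi^\ast$ and $\Psi$ by $P^\ast$ only decreases $P$ and $\Psi$ pointwise (hence increases $j$ and the $\Psi$-integral), so does not decrease $J[\sigma]$; this lands us in $\calU_0$ where $P,\Psi$ are convex, Lipschitz with constant $\leq R_0$ (by \eqref{eq:subgradients}, \eqref{eq:conditionP}), and the normalization $P_n(0,0)=0$ can be imposed by the shift $(P-\lambda,\Psi+\lambda)$. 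Lemma \ref{le:aprioribound1} bounds $|\lambda|$, Remark \ref{re:LipschitzPPsi} bounds $\|P_n\|_\infty$ and $\|\Psi_n\|_\infty$, and uniform Lipschitz bounds give equicontinuity; Arzelà–Ascoli then yields uniform limits $P_n\to P_0$ on $\bar{\mathcal D}$, $\Psi_n\to\Psi_0$ on $\Delta$ (extended to $\Reals_+^2$ via $P_0^\ast$). Upper semicontinuity of $J[\sigma]$ along this sequence follows from Lemma \ref{le:conv J(P)}(ii) ($j(P_n)\to j(P_0)$) and dominated convergence for the $\Psi$-integral, so $(P_0,\Psi_0)\in\calM\cap\calU_0$.

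\textbf{Step 3: Primal existence and uniqueness (iii), and dual uniqueness (iv).} For existence of a primal minimizer: take a minimizing sequence $(\rho_n,\gamma_n)$; by the rearrangement trick recalled in the introduction, $\bar I[\sigma](\rho^\#)\leq\bar I[\sigma](\rho)$, so I may take each $\rho_n$ monotone, and \eqref{eq:boundonheight12}/\eqref{eq:boundonheight2} keeps them bounded away from $1/(2r_0^2)$; Helly gives a pointwise-convergent subsequence $\rho_n\to\rho_0$ monotone, and $\gamma_n$ (tight, since supported in a fixed compact) converges weakly to some $\gamma_0\in\Gamma(e(s)\chi_{D_{\rho_0}}\calL^2,\sigma)$ — here one checks the first marginal converges, using that $\chi_{D_{\rho_n}}\to\chi_{D_{\rho_0}}$ in $L^1$ along a monotone sequence. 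Lower semicontinuity of the linear functional $I$ gives that $(\rho_0,\gamma_0)$ is a minimizer. Then combine with Step 2: by weak duality $\inf I\geq\sup_{\calU_0}J=J[\sigma](P_0,\Psi_0)$; to get equality I would verify the two equality conditions of (ii) hold for the candidate built from $(P_0,\Psi_0)$, namely $\rho_0:=\rho_0^-$ (the minimizer of $\Pi_{P_0}(\cdot,z)$, monotone by Lemma \ref{le:boundonheight3}) and $\gamma_0:=(\id\times\nabla P_0)_\#(e(s)\chi_{D_{\rho_0}}\calL^2)$ — but one must first check this $\gamma_0$ has second marginal $\sigma$, which is exactly the content of the Monge–Ampère equation and is where the real work lies. \textbf{This matching step is the main obstacle}: it requires showing $\nabla P_0$ pushes $e(s)\chi_{D_{\rho_0}}\calL^2$ forward to $\sigma$, equivalently a Brenier-type argument that the convex function $P_0$ from the dual problem transports the correct source mass onto $\sigma$; the boundary relation \eqref{eq 1: prop: P on the boundary} comes from Remark \ref{re:boundonheight2} applied to $\rho_0^-$. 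Once equality in (ii) is established, uniqueness of the primal minimizer follows because any minimizer must satisfy both equality conditions, which pin down $\rho$ a.e. (as $\Arg_{P_0}(\cdot,z)$ is a singleton off a countable set, Corollary \ref{co:uniqueheight1}) and then pin down $\gamma$; and uniqueness of the dual maximizer on $\calU_0$ (iv) follows because $P_1=P_0$ on $D_{\rho_0}$ is forced by $j(P_1)=j(P_0)$ together with the transport identity $\nabla P_1{}_\#(e(s)\chi_{D_{\rho_0}}\calL^2)=\sigma=\nabla P_0{}_\#(\cdots)$ and convexity (two convex functions with the same gradient pushforward and the same boundary values agree), while $\Psi_1=\Psi_0$ on $\Delta$ then follows from $\Psi_i=P_i^\ast$ and $\spt\sigma=\Delta$.
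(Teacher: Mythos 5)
Your Steps 1 and 2 (weak duality and existence of a maximizer in $\calU_0$) follow essentially the paper's argument — double convexification to pass into $\calU_0$, the affine normalization $(P-\lambda,\Psi+\lambda)$ controlled by Lemma \ref{le:aprioribound1}, Arzel\`a–Ascoli, and the continuity of $j$ from Lemma \ref{le:conv J(P)}(ii) — so there is nothing to add there.

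The problem is in Step 3, and you have flagged it yourself: you write that the "matching step" requires showing $\nabla P_0$ pushes $e(s)\chi_{D_{\rho_0}}\calL^2$ forward to $\sigma$, call it "where the real work lies," and then stop. That is indeed where the real work lies, and your proposal does not do it, so the chain $\inf I = \sup J$, the identification $\bar\rho = \rho_0^-$, and the boundary relation \eqref{eq 1: prop: P on the boundary} are all left unsupported. Your suggestion that this is "a Brenier-type argument" does not resolve it: Brenier's theorem would tell you that some convex function transports a given source onto $\sigma$, but here the source $e(s)\chi_{D_{\rho_0}}\calL^2$ is itself determined by $P_0$ through the free boundary $\rho_0(z)\in\Arg_{P_0}(\cdot,z)$, so the source and the potential are coupled and you cannot simply invoke off-the-shelf existence of an optimal map. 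The paper closes this gap with a first-variation (Euler–Lagrange) argument at the dual maximizer: perturb $\Psi_\delta = \Psi_0 + \delta g$ with $g\in C_c(\Reals^2)$, set $P_\delta=\Psi_\delta^\ast$, record the stability estimates $\|P_\delta-P_0\|_\infty\le|\delta|\|g\|_\infty$, the a.e.\ derivative formula $\lim_{\delta\to 0}(P_\delta-P_0)/\delta = -g\circ\nabla P_0$, and the continuity $\rho_\delta(z)\to\rho_0(z)$ at continuity points of $\rho_0$, and then differentiate $J[\sigma]$ in $\delta$ — the inner variation of $j$ is controlled by the two-sided inequalities \eqref{eq:forPi1}–\eqref{eq:forj1}. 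Optimality of $(P_0,\Psi_0)$ forces the derivative to vanish, which is exactly $\int_\Delta g\,d\sigma = \int_0^H\!\int_0^{\rho_0(z)} g(\nabla P_0)\,e(s)\,ds\,dz$ for all $g\in C_c$, i.e.\ the desired push-forward. This variational step is the centerpiece of the proof; without it your argument does not establish (iii), and (iv) (which you correctly deduce from the transport identity plus connectedness of $D_{\rho_0}$) also remains undone.

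A secondary remark: your direct compactness route for primal existence (Helly plus tightness of $\gamma_n$) is a workable alternative to the paper's construction, which instead builds the minimizer $(\rho_0,\gamma_0)$ explicitly as $\gamma_0=(\id\times\nabla P_0)_\#(e(s)\chi_{D_{\rho_0}}\calL^2)$ once the push-forward identity is in hand. But that alternative does not help you avoid the gap, because even with an abstract minimizer in hand you still need the push-forward identity to prove $\inf I=\sup J$ and to obtain \eqref{eq 1: prop: P on the boundary}.
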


\proof{}
1.  Set
$$
\bar{c}_0=\sup_{(p,q) \in \Delta \times \mathcal{D}} \langle p,q \rangle, \qquad \bar{P}_0(p)=\bar{c}_0, \quad \bar{\Psi}_0(q)=0
$$ 
so that $$(\bar{P}_0,\bar{\Psi}_0) \in \calU   \quad \text {   and }  \quad  -C_0:=J[\sigma](\bar{P}_0,\bar{\Psi}_0 )-1  \text{  is finite.} $$ 

Let $\{(\bar P_n,\bar \Psi_n )\}_{n=1}^\infty \subset \calU$ be a maximizing sequence for $J[\sigma]$ over 
$\calU$.\\
 We note that whenever $(\bar P,\bar \Psi)\in\calU$, by the double convexification trick (cfr. \cite{Villani} Page 51), we have 
$$(\bar P{\ast \ast},\bar P^\ast,)\in\calU_0 \hbox{ and } J[\sigma](\bar P,\bar \Psi) \leq J[\sigma]( \bar P{\ast \ast},\bar P^\ast).$$ 
This shows, on the one hand, that if the set of maximizers $\calM$ of $J[\sigma]$ over $\calU$ is non empty then  so is $\calM\cap\calU_0$  and, on the other hand, that we may assume without loss of generality that $\{(\bar P_n,\bar \Psi_n)\}_{n=1}^\infty$ is contained in $\calU_0.$ We set 
$$
\Psi_n=\bar \Psi_n+\bar P_n(0,0), \quad \lambda_n=-\bar P_n(0,0), \quad P_n=\bar P_n -\bar P_n(0,0). 
$$ 
We easily check that  $\{( P_n, \Psi_n)\}_{n=1}^\infty \subset \calU_0$ and 
$$
\lim_{n \rightarrow \infty} J[\sigma]( P_n-\lambda_n,\Psi_n+\lambda_n)=\lim_{n \rightarrow \infty} J[\sigma](\bar P_n,\bar\Psi_n)= \sup_{ \calU} J[\sigma].
$$
And so, for $n$ large enough  
\begin{equation}\label{eq:bound1onJ} 
-C_0 \leq J[\sigma](P_n-\lambda_n,\Psi_n+\lambda_n).
\end{equation} 
Therefore, as  $P_n(0,0)=0$ by Lemma \ref{le:aprioribound1} we obtain that $\{\lambda_n\}_{n=1}^\infty \subset \Reals$ is bounded. Hence, up to a subsequence, $\{\lambda_n\}_{n=1}^\infty$ converges to a real number $\lambda_\ast.$ \\
In view of (\ref{eq:subgradients}) and (\ref{eq:LipschitzPPsi1}), we have that the sequences $\left\lbrace P_n\right\rbrace^{\infty}_{n=1}\subset  C(\bar{\mathcal{D}})$ and $\left\lbrace \Psi_n\right\rbrace^{\infty}_{n=1}\subset  C(\bar\Delta)$  are uniformly bounded and uniformly Lipschitz. We then use  Ascoli- Arzela to conclude that there exists a subsequence of $\{(\Psi_n, P_n)\}_{n=1}^\infty$ converging  uniformly to some  $(\Psi_{\ast}, P_{\ast})\in C(\Delta)\times C(\bar{\mathcal{D}})$. In the sequel, we assume without loss of generality that 
$$\{\lambda_n\}_{n=1}^\infty \text{  converges to }\lambda_\ast \hbox{  and } \{( P_n,\Psi_n)\}_{n=1}^\infty \text{  converges uniformly to } (P_{\ast},\Psi_{\ast}). $$
We set 
$$ P_0:=P_{\ast}-\lambda_\ast, \quad  \Psi_0 :=\Psi_{\ast}+ \lambda_\ast.$$
Therefore  $$\{(\bar P_n,\bar \Psi_n)\}_{n=1}^\infty \text{  converges uniformly to }  (P_0,\Psi_0 ).$$
 Note that $\{\bar P_n\}_{n=1}^\infty$ are Lipschitz and satisfies (\ref{eq:conditionP}). We use the fact that $\{\bar \Psi_n)\}_{n=1}^\infty \text{  converges uniformly to }  \Psi_0$, $\sigma$ is a probability measure  and lemma \ref{le:conv J(P)} (ii) to obtain that
$$\{J[\sigma](\bar P_n,\bar \Psi_n)\}_{n=1}^\infty \text{  converges to }   J[\sigma](P_0,\Psi_0).$$
This established that $(\Psi_0, P_0) $ is a maximizer of $J[\sigma]$ over $\calU.$
 
2. Let $( P,\Psi)\in \calU_0$  and  $(\gamma, \rho) \in\mathcal{L}_{\sigma}$. Then $\Psi, P$  are Lipschitz  and  $P(p) +\Psi(q) \geq \langle p, q\rangle$. We note that 
\begin{equation}\label{eq:inequalityjPi2} 
\begin{split}
j(P) \leq \int_0^H \Pi_P(\rho(z),z) dz=\int_{D_\rho } \Bigl( {\Omega^2 r_0^2 \over 2(1-2s r_0^2)}-P(s,z)\Bigr) e(s)dsdz.\\
\end{split}
\end{equation}  

As $\gamma \in \Gamma \left( e(s)\chi_{D_\rho}\calL^2, \sigma\right) $ and $P(p) +\Psi(q) \geq \langle p, q\rangle$ we use (\ref{eq:inequalityjPi2}) to get 
\begin{equation}\label{eq:inequalityjPi4} 
 J[\sigma]( P,\Psi)\leq \int_{D_{\rho}\times \Reals^2_+ }\left( -\langle p, q \rangle +{\Upsilon \over 2 r_0^2}-\Omega \sqrt \Upsilon + {r_0^2 \Omega^2 \over 2(1-2 r_0^2 \rho)}\right) \gamma(dp, dq)= I(\rho,\gamma).
\end{equation}

 Note that equality holds in (\ref{eq:inequalityjPi4}) if and only if equality holds in (\ref{eq:inequalityjPi2}) and $P(p) + \Psi(q)= \langle p, q\rangle$ for $\gamma$ almost every $(p,q)$ .   The first condition means that $\rho(z) \in \Arg_P(\cdot, z)$ for almost every $z \in [0,H]$ by using Lemma \ref{le:conv J(P)} (i). As the first projection of $\gamma$ is absolutely continuous with respect to $\calL^2$, the second condition means that $q=\nabla P(p)$ for $\gamma$ almost every $(p,q)$ and so,  $\gamma$ is concentrated on the graph of $\nabla P$. This implies that $\id \times \nabla P$ pushes $e(s)\chi_{D_{\rho}}(s,z)\calL^2$ forward to $\gamma$. 

3.
Assume that   $(P_0, \Psi_0)\in\calU_{0}$ is a maximizer of $J[\sigma]$ over $\calU$. Let $ g \in C_c(\Reals^2)$. For  any $\delta \in (-1,1)$, we set
 $$\Psi_\delta=\Psi_0 + \delta g \qquad \hbox{ and }\qquad  P_{\delta}=\Psi_\delta^*.$$ 
 We note that $ \left\lbrace P_{\delta}\right\rbrace_{-1<\delta<1}\subset C(\bar{\mathcal{D}}). $ It can be shown that (cfr. \cite{Gangbo95a} \cite{Gangbo95})
\begin{equation}\label{eq:controlPe1} 
||P_\delta-P_0||_\infty \leq |\delta| ||g||_\infty \qquad \hbox{ and }\qquad  \lim_{\delta \rightarrow 0} {P_\delta(p)-P_0(p) \over \delta}= -g(\nabla P_0(p)) 
\end{equation} 
for all $p \in \dom (\nabla P_0)$. As $P_0$ is Lipschitz, the second equation in (\ref{eq:controlPe1}) holds almost everywhere with respect to $\calL^2.$\\
Fix $z\in[0,H].$ Let $\left\lbrace \delta_{n}\right\rbrace^{\infty} _{n=1}\subset(-1,1)$  converging to $0$.  We note that the first equation in (\ref{eq:controlPe1}) ensures that $\left\lbrace P_{\delta_n}\right\rbrace_{n=1}^{\infty}$ uniformly converges to $P_0.$  For each $n\geq 1,$
 as $\Arg_{P_{\delta_n}}(\cdot, z)$ is compact (cfr Lemma \ref{le:boundonheight1*1}), let $\rho_{\delta_{n}}(z)$ denote its  smallest element . For the same reasons, let $\rho_0(z)$ denote the smallest element of $\Arg_{P_{0}}(\cdot, z)$.  Then $\left\lbrace \rho_{\delta_{n}}(z)\right\rbrace_{1}^{\infty}$ is bounded in light of (\ref{eq:huniqueinj2}) and so, without loss of generality we assume that $\left\lbrace \rho_{\delta_{n}}(z)\right\rbrace_{n=1}^{\infty}$ converges. If $z$ is a continuity point for $\rho_{0}$ then  Lemma \ref{le:boundonheight3} (v) ensures that  $h_0(z)$ is the unique element of $\Arg_{P_0}(\cdot, z)$  and so by using  Lemma \ref{le:boundonheight1*1} (ii) we obtain
$$\lim_{n \rightarrow \infty} \rho_{\delta_n}(z)= \rho_0(z).$$
As $\left\lbrace \delta_{n}\right\rbrace^{\infty} _{n=1}$ is arbitrary, we obtain denoting by $\rho_{\delta}(z)$ the smallest element of $\Arg_{P_{\delta}}(\cdot, z)$
\begin{equation}\label{eq:limithe1} 
\lim_{\delta \rightarrow 0} \rho_\delta(z)= \rho_0(z).
\end{equation} 
In light of Corollary \ref{co:uniqueheight1}, the equation  (\ref{eq:limithe1} ) holds for almost every $z \in [0,H]$. \\
Fix $\delta \in (-1,1)$. By definition of $\rho_0(z)$ we have  $\Pi_{P_0}(\rho_0(z),z) \leq \Pi_{P_0}(\rho_\delta (z),z) $ and so
\begin{equation}\label{eq:forPi1} 
\Pi_{P_0}(\rho_0(z),z)-\Pi_{P_\delta}(\rho_\delta(z),z)\leq   \Pi_{P_0}(\rho_\delta(z),z)-\Pi_{P_\delta}(\rho_\delta(z),z)=    \int_0^{\rho_\delta(z)} (P_\delta(s,z)-P_0(s,z))e(s) ds.
\end{equation} 
 Similarly, we establish that 
\begin{equation}\label{eq:forPi2} 
\Pi_{P_\delta}(\rho_\delta(z),z)-\Pi_{P_0}(\rho_0(z),z)\leq   \Pi_{P_\delta}(\rho_0(z),z)-\Pi_{P_0}(\rho_0(z),z)=   - \int_0^{\rho_0(z)} (P_\delta(s,z)-P_0(s,z))e(s) ds.
\end{equation} 
Let again $\left\lbrace \delta_{n}\right\rbrace^{\infty} _{n=1}\subset(-1,1)$  converging to $0$. We use the definition of $j$ in (\ref{eq:forJ}),  with (\ref{eq:forPi1} ), (\ref{eq:forPi2}) to obtain that 
\begin{equation}\label{eq:forj1}
\int_0^H \int_0^{\rho_0(z)} (P_{\delta_n}(s,z)-P_0(s,z))e(s) dsdz\leq j(P_0)-j(P_{\delta_n})\leq \int_0^H \int_0^{\rho_{\delta_{n}}(z)} (P_{\delta_n}(s,z)-P_0(s,z))e(s)dsdz.
\end{equation}
 In view of (\ref{eq:controlPe1}), $\left\lbrace P_{\delta_n}\right\rbrace^{\infty}_{n=1}$ converges uniformly to $P_0$, so   (\ref{eq:boundonheight2})  holds for $\left\lbrace P_{\delta_n}\right\rbrace^{\infty}_{n=1}$. We then choose $M$ such that
$$2r_0^2\sup_{n} M_{P_{\delta_n}}\leq 2r_0^2 M<1.$$
We note that $\partial_{\cdot}P_{\delta_n}\subset \Delta$ and so $\left\lbrace P_{\delta_n}\right\rbrace^{\infty}_{n=1}$ is uniformly Lipschitz  and satisfies (\ref{eq:conditionP}) .  By (\ref{eq:huniqueinj2}), 
\begin{equation}\label{eq:hdeltabounded}
0 \leq \rho_{\delta_n}(z)\leq M
\end{equation}
for  $z\in[0,H]$ a.e and $n\geq 1$. This ensures that the integrals in (\ref{eq:forj1}) are finite  for $n\geq 1$.\\
We rewrite (\ref{eq:forj1}) as 
\begin{equation}\label{eq:forj2}
\begin{split}
 0\leq j(P_0)-j(P_{\delta_n})-\int_0^H dz\int_0^{\rho_0(z)} (P_{\delta_n}(s,z)-& P_0(s,z))e(s) ds \\
 &\leq \int_0^H \int_{\rho_0(z)}^{\rho_{\delta_n}(z)} (P_{\delta_n}(s,z)-P_0(s,z))e(s) ds dz.\\
\end{split}
\end{equation}
  We use  the fact $e$ is bounded on $[0,M]$, the first equation in (\ref{eq:controlPe1}) and apply the Lebesgue dominated convergence theorem, using  (\ref{eq:limithe1}) and (\ref{eq:hdeltabounded}) to obtain that 

\begin{equation}\label{eq:forj3}
 \limsup_{n\rightarrow\infty}\Big|\int_0^H dz\int_{\rho_0(z)}^{\rho_{\delta_n}(z)} \dfrac{(P_{\delta_n}(s,z)-P_0(s,z))e(s)}{\delta_n} ds \Big|\leq \max_{[0,H]}e||g||_{\infty}\limsup_{n\rightarrow\infty}\int_0^H |\rho_{\delta_n}(z)-\rho_0(z)|dz=0.
\end{equation}

By the Lebesgue dominated convergence theorem, (\ref{eq:controlPe1}) implies that 
\begin{equation}\label{eq:forj4}
\lim_{n\rightarrow \infty} \int_0^H dz\int_0^{\rho_0(z)} \dfrac{(P_{\delta_n}(s,z)-P_0(s,z))e(s)}{\delta_n} ds =-\int_0^H \int_0^{\rho_0(z)}g(\nabla P(p))e(s)dsdz.
\end{equation}

We note that 
\begin{equation}\label{eq: variation for J}
{J[\sigma](P_{\delta_n}, \Psi_{\delta_n})-J[\sigma](P_0, \Psi_0) \over \delta_n}=  -\int_\Delta g d\sigma + \dfrac{j(P_{\delta_n})-j(P_0)}{\delta_n}.
\end{equation}
 We use the fact that $\left\lbrace \delta_{n}\right\rbrace^{\infty} _{n=1}$ is an arbitrary sequence that converges to $0$ and combine (\ref{eq:forj2})-(\ref{eq: variation for J}) to get 

\begin{equation}\label{eq:limitJje11}   
\lim_{\delta \rightarrow 0}  {J[\sigma](P_\delta, \Psi_\delta)-J[\sigma](P_0, \Psi_0) \over \delta}= -\int_\Delta g d\sigma +  \int_0^H dz \int_0^{\rho_0(z)} g(\nabla P(s,z)) e(s) ds. 
\end{equation}  
Since $(P_0,\Psi_0)$ maximizes $J[\sigma]$ over $\calU$  and  $(P_\delta, \Psi_\delta) \in \calU $, (\ref{eq:limitJje11}) implies that  
\begin{equation}\label{eq:limitJje12}   
\int_\Delta g d\sigma= \int_0^H dz \int_{0}^{\rho_0(z)} g(\nabla P_0(p)) e(s) dp. 
\end{equation} 
 (\ref{eq:limitJje12}) holds for any $g \in C_c(\Reals^2)$ which means that $\nabla  P_0$ pushes $e(s)\chi_{D_{\rho_0}}(s,z)\calL^2$ forward to $\sigma$.
 
4. We recall that $(P_0, \Psi_0)\in\calU_{0}$  is a maximizer of $J[\sigma]$ over $\calU$,  that  for $z\in[0,H]$, $\rho_0(z)$ is the smallest element of $\Arg_{P_0}(\cdot, z)$ and we set 
                        $$\gamma_0:=(\id\times\nabla P_0)_{\#}e(s)\chi_{D_{\rho_0}}(s,z)\calL^2. $$

 Then, by part (ii) of the theorem, we have $I(\rho_0,\gamma_0)=J[\sigma](P_{0},\Psi_{0})$ which ensures that $(\rho_0,\gamma_0)$ is a minimizer in (\ref{eq:primal1}).\\
 Let $(\bar{\rho},\bar{\gamma})$ be another minimizer in (\ref{eq:primal1}). Then $I(\bar{\rho},\bar{\gamma})= I(\rho_0,\gamma_0)$ and so $I(\bar{\rho},\bar{\gamma})=J[\sigma](P_{0},\Psi_{0})$. Again by part (ii) $(\id\times\nabla P_0)$ pushes forward $e(s)\chi_{D_{\bar \rho}}\calL^2$ to  $\bar{\gamma}$  and $\bar{\rho}(z)\in\Arg_{P_0}(\cdot, z)$ for a.e $z\in [0,H]$. We use Corollary \ref{co:uniqueheight1} to obtain that $\bar{\rho}(z)=\rho_{0}(z)$ a.e. These prove that the minimizer in (\ref{eq:primal1}) is unique. By Remark \ref{re:boundonheight2}, equation (\ref{eq 1: prop: P on the boundary}) holds. In light of lemma \ref{le:conv J(P)}(i), $\rho_0$ is monotone and satisfies (\ref{eq:huniqueinj1}). The equation (\ref{eq 1: inversibility of the gradient of P}) is well known (see \cite{Villani}).
5. Assume $(P_1,\Psi_1)$ is another maximizer of $J[\sigma]$ in $\calU_0$. Then 
$$\gamma_0=(\id\times\nabla P_0)_{\#}e(s)\chi_{D_{\rho_0}}(s,z)\calL^2 =(\id\times\nabla P_1)_{\#}e(s)\chi_{D_{\rho_0}}(s,z)\calL^2 .$$ This implies that $\nabla P_0=\nabla P_1$ $e(s)\chi_{D_{\rho_0}}(s,z)\calL^2$-a.e and so the equality holds $\calL^2$-a.e on $D_{\rho_0}$ as $e>0$. As, $D_{\rho_0}$ is connected and $P_0$ and $P_1$ are Lipschitz continuous satisfying (\ref{eq 1: prop: P on the boundary}), we conclude that $P_1=P_0$ on $D_{\rho_0}$ and without loss of generality we take $P_1=P_0$ on $\mathcal{D}$. Consequently, $\Psi_1=\Psi_2$ on $\Delta$.
\endproof


\subsection{ Regularity property of the domain $D_{\rho}$}
\label{ sec: Regularity property of the domain}
In this section we consider  the Lipschitz functions $P$   that satisfy
\begin{equation}\label{eq:conditionP for regularity of the domain} 
0\leq\frac{\partial P}{\partial s}(s,z)\leq R_0 \qquad\hbox{ and }\qquad \frac{1}{R_0} \leq\frac{\partial P}{\partial z}(s,z)\leq R_0.
\end{equation} 
As a consequence,  $\Arg_P(\cdot, z)$ is compact. We recall that for such $P$,
\begin{equation}\label{eq: definition h+ h- for boundary} 
 \rho^+(z)= \max_{\rho\in\Arg_P(\cdot, z)} \rho, \qquad \rho^-(z)= \min_{\rho\in\Arg_P(\cdot, z)} \rho.
\end{equation}

\begin{lemma}\label{lemma existence of h}
  Assume $P$ is Lipschitz and satisfies (\ref{eq:conditionP for regularity of the domain}).
The set $\calZ=\left\lbrace z\in [0,H]: 0\in Argmin \Pi_{P}(\cdot,z)\right\rbrace $  when non empty, is a closed interval of the form $[0,z^*]$. In the case $\calZ$ is empty we set $z^*=0.$\\
\end{lemma}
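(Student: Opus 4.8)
The plan is to show that membership of $z$ in $\calZ$ is a "downward closed" condition in $z$, i.e. if $z_1 \le z_2$ and $z_2 \in \calZ$ then $z_1 \in \calZ$, and then to invoke closedness of $\calZ$ to conclude it is of the form $[0,z^*]$. The monotonicity half is exactly what Lemma \ref{le:boundonheight1*1}(iii) gives: if $P$ is Lipschitz and satisfies the first inequality in (\ref{eq:conditionP}) — which is implied by (\ref{eq:conditionP for regularity of the domain}) — then for $z_1 \le z_2$, any $\rho_1 \in \Arg_P(\cdot,z_1)$ and $\rho_2 \in \Arg_P(\cdot,z_2)$ satisfy $\rho_1 \le \rho_2$. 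Hence if $0 \in \Arg_P(\cdot,z_2)$, then since $\rho^-(z_1) = \min \Arg_P(\cdot,z_1) \le 0$ and $\Arg_P(\cdot,z_1) \subset [0,1/(2r_0^2))$, we get $\rho^-(z_1) = 0$, so $0 \in \Arg_P(\cdot,z_1)$ and $z_1 \in \calZ$. This shows $\calZ$ is an interval containing $0$ (when nonempty).

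Next I would argue $\calZ$ is closed. Take $z_n \in \calZ$ with $z_n \to z \in [0,H]$. Then $\rho_n := 0 \in \Arg_{P}(\cdot, z_n)$; applying Lemma \ref{le:boundonheight1*1}(ii) with the constant sequence $P_n \equiv P$ (which converges uniformly to $P$ trivially, and $P$ satisfies the hypotheses of Lemma \ref{le:boundonheight1} since it is Lipschitz and obeys (\ref{eq:conditionP})), and with $\rho_n \equiv 0 \to 0$, the conclusion (\ref{eq:boundonheight21}) yields $0 \in \Arg_P(\cdot, z)$, so $z \in \calZ$. Thus $\calZ$ is a closed subinterval of $[0,H]$ containing $0$, hence of the form $[0,z^*]$ for some $z^* \in [0,H]$ (with the convention $z^* = 0$ when $\calZ = \emptyset$, as stated).

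The only genuinely delicate point is making sure the hypotheses required by the cited lemmas are met: (\ref{eq:conditionP for regularity of the domain}) is strictly stronger than (\ref{eq:conditionP}) (it additionally imposes the lower bound $\partial P/\partial z \ge 1/R_0$), so all of Lemma \ref{le:boundonheight1*1}(i)--(iii) and hence compactness of $\Arg_P(\cdot,z)$ apply verbatim; and using the constant sequence trick to feed into part (ii) is legitimate because uniform convergence of $P_n \equiv P$ to $P$ is immediate. Everything else is a short logical argument, so I expect no real obstacle — the content is entirely carried by the monotonicity and compactness results already established. One should just be careful to phrase the downward-closure step so that it also covers the endpoint behavior at $z = 0$ and $z = H$, which is automatic since $\Pi_P(0,z) = 0$ for every $z$ and the $\Arg$ sets are always nonempty by Lemma \ref{le:boundonheight1*1}(i).
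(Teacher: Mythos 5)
Your proof is correct and is essentially identical to the paper's: both rely on Lemma \ref{le:boundonheight1*1}(iii) to show $\calZ$ is downward closed (equivalently $[0,z^*)\subset\calZ$) and on Lemma \ref{le:boundonheight1*1}(ii), applied with the constant sequence $P_n\equiv P$ and $\rho_n\equiv 0$, to show $\calZ$ is closed (equivalently $z^*\in\calZ$). The only difference is organizational — you phrase it as "downward closed and closed," while the paper sets $z^*=\sup\calZ$ up front and then verifies the two inclusions — so the argument is the same.
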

\proof{}
Assume $\calZ$ is non empty and set $z^*$ to be its supremum. By definition of $z^*$, $\calZ\subset [0,z^*].$ Conversely, let $\left\lbrace z_n\right\rbrace _{n=1}^{\infty}$ be a sequence in $\calZ$ such that $\left\lbrace z_n\right\rbrace _{n=1}^{\infty}$ converges to $z^*$. Then, we use Lemma \ref{le:boundonheight1*1} (ii) to obtain that $0\in Argmin \Pi_{P}(\cdot,z^*)$, that is, $z^*\in\calZ.$ Lemma \ref{le:boundonheight1*1} (iii) ensures that $[0,z^*)\subset \calZ$.\endproof
\begin{lemma}\label{lem: existence of h inverse 1}
 Assume $P$ is Lipschitz and satisfies (\ref{eq:conditionP for regularity of the domain}) and let $z^*$ be as in Lemma \ref{lemma existence of h}.\\
\begin{enumerate}
 \item [(i)] There exists $c_0 >0$ such that if  $z^*\leq z_1\leq  z_2 \leq H$   and  $\rho_i\in Argmin\Pi_{P}(\cdot, z_i)$ $ i=1,2$, then  
\begin{equation}\label{eq:lip property of h}
 z_2-z_1\leq c_0(\rho_2-\rho_1).
\end{equation}
 \item [(ii)]  For any $z_1, z_2\in [z^*,H]$, $Argmin\Pi_{P}(\cdot, z_1)\cap Argmin\Pi_{P}(\cdot, z_2)=\emptyset$ if $z_1\neq z_2$ 
 \item [(iii)] $\rho^-$ , $\rho^+$ are strictly increasing on  $ [z^*,H]$.
\end{enumerate}
\end{lemma}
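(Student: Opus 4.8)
The plan is to establish the three statements in sequence, each feeding into the next, with the quantitative estimate in part (i) being the engine that drives everything.

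\textbf{Part (i).} First I would use the characterization of minimizers already in hand: for $z \geq z^*$ any $\rho_i \in \Arg_P(\cdot, z_i)$ is strictly positive (since $0 \notin \Arg_P(\cdot, z_i)$ by definition of $z^*$ and Lemma \ref{lemma existence of h}), so by Remark \ref{re:boundonheight2} it satisfies the first-order condition $\partial_\varrho \Pi_P(\rho_i, z_i) = 0$, i.e. $P(\rho_i, z_i) = \Omega^2 r_0^2 / (2(1 - 2\rho_i r_0^2))$. Next I would rerun the twist-condition computation of Lemma \ref{le:boundonheight1*1}(iii): from the minimality of $\rho_1$ at $z_1$ and $\rho_2$ at $z_2$ one gets, as in (\ref{eq:boundonheight6}),
$$
0 \leq -\int_{\rho_1}^{\rho_2} \!\! ds \int_{z_1}^{z_2} \frac{\partial^2 \Pi_P}{\partial z\, \partial s}(s,z)\, dz = \int_{\rho_1}^{\rho_2}\!\! ds \int_{z_1}^{z_2} e(s) \frac{\partial P}{\partial z}(s,z)\, dz.
$$
Now, where part (iii) of Lemma \ref{le:boundonheight1*1} only used $\partial_z P \geq 0$, here I would exploit the \emph{lower} bound $\partial_z P \geq 1/R_0$ from (\ref{eq:conditionP for regularity of the domain}), together with an \emph{upper} bound $e(s) \leq e(M_P)$ valid on $[0, M_P]$ (using that all relevant $\rho$ lie in $[0, M_P]$ with $2r_0^2 M_P < 1$, by Lemma \ref{le:boundonheight1*1}(i) and Lemma \ref{le:boundonheight1}). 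But I also need a lower bound on $\partial^2\Pi_P/\partial z\partial s$ in absolute value only where it helps — actually the cleaner route is: to bound $z_2 - z_1$ above I want a \emph{lower} bound on the double integral, so I should combine the first-order condition with a direct estimate. The simplest argument: write $\Pi_P(\rho_2, z_1) - \Pi_P(\rho_1, z_1) \geq 0$ and expand the left side using that $\partial_s \Pi_P(s, z_1) = (\tfrac{\Omega^2 r_0^2}{2(1-2sr_0^2)} - P(s,z_1))e(s)$, combined with the sign and boundedness of $\partial_z P$, to isolate $(\rho_2 - \rho_1)$ against $(z_2 - z_1)$; the constant $c_0$ then depends only on $R_0, r_0, \Omega$ and the uniform bound $M_P$ on the heights. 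I expect this to be the \textbf{main obstacle}: getting a clean one-sided Lipschitz estimate requires carefully tracking which monotonicity of $\Pi_P$ in each variable is being used and verifying the denominator $1 - 2\rho r_0^2$ stays bounded below (which is exactly what Lemma \ref{le:boundonheight1} provides via $2r_0^2 M_P < 1$).

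\textbf{Part (ii).} This is essentially a corollary of (i): if $z_1 \neq z_2$, say $z_1 < z_2$, and some $\rho$ were in both $\Arg_P(\cdot, z_1)$ and $\Arg_P(\cdot, z_2)$, then applying (\ref{eq:lip property of h}) with $\rho_1 = \rho_2 = \rho$ gives $z_2 - z_1 \leq 0$, a contradiction. One should just note $z_1, z_2 \geq z^*$ is assumed so (i) applies.

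\textbf{Part (iii).} For strict monotonicity of $\rho^-$ and $\rho^+$ on $[z^*, H]$: take $z^* \leq z_1 < z_2 \leq H$. By Lemma \ref{le:boundonheight3}(iii) we already have $\rho^+(z_1) \leq \rho^-(z_2)$, and by (ii) the argmin sets at $z_1$ and $z_2$ are disjoint, so $\rho^+(z_1) \neq \rho^-(z_2)$, whence $\rho^+(z_1) < \rho^-(z_2)$. Combining with $\rho^-(z_i) \leq \rho^+(z_i)$ gives $\rho^-(z_1) \leq \rho^+(z_1) < \rho^-(z_2) \leq \rho^+(z_2)$, which yields strict increase of both functions. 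This part is routine once (i) and (ii) are in place; the only care needed is that when $z_1 = z^*$ the point $z^*$ itself lies in $[z^*, H]$ and the disjointness statement (ii) still applies there.
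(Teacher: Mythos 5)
Your parts (ii) and (iii) are fine and essentially the derivation in the paper (both are immediate from the quantitative estimate in (i) plus Lemma~\ref{le:boundonheight3}(iii)). For part (i), the first route you sketch --- rerunning the twist inequality (\ref{eq:boundonheight6}) --- cannot by itself yield a Lipschitz bound: after plugging in $\partial_z P\ge 1/R_0$ that inequality only asserts that a nonnegative double integral is nonnegative, which recovers the qualitative monotonicity $\rho_1\le\rho_2$ but gives no control of $z_2-z_1$ by $\rho_2-\rho_1$. The ``simplest argument'' you then gesture at does point the right way, and the clean way to execute it is the paper's: set $m(s)=\Omega^2r_0^2/(2(1-2sr_0^2))$ and $\alpha(s,z)=m(s)-P(s,z)$, use Remark~\ref{re:boundonheight2} to get $\alpha(\rho_1,z_1)=\alpha(\rho_2,z_2)=0$, then estimate $\alpha(\rho_2,z_1)-\alpha(\rho_2,z_2)=\int_{z_1}^{z_2}\partial_zP(\rho_2,\cdot)\,dz\ge(z_2-z_1)/R_0$ and $\alpha(\rho_2,z_1)-\alpha(\rho_1,z_1)=\int_{\rho_1}^{\rho_2}\partial_s\alpha\,ds\le Lip(m)(\rho_2-\rho_1)$, and combine. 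Working with $\alpha$ rather than with $\partial_s\Pi_P=\alpha\,e$ is what keeps the factor $e(s)$ and its derivative out of the computation.

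There is, however, a genuine gap that you do not flag. You claim that for any $z_i\ge z^*$ every $\rho_i\in\Arg_P(\cdot,z_i)$ is strictly positive, ``since $0\notin\Arg_P(\cdot,z_i)$ by definition of $z^*$.'' That is false precisely at $z_i=z^*$: when $\calZ\neq\emptyset$, Lemma~\ref{lemma existence of h} shows $z^*\in\calZ$, which by definition means $0\in\Arg_P(\cdot,z^*)$. If $z_1=z^*$ and $\rho_1=0$, Remark~\ref{re:boundonheight2} does not give the equality $\alpha(\rho_1,z_1)=0$ --- at a boundary minimizer the first-order condition is the one-sided $\alpha(0,z^*)\ge0$ --- and since $\alpha(0,z^*)$ may be strictly positive, the chain of estimates above degenerates. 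The paper closes this case with a separate limit: it proves (\ref{eq:lip property of h}) for $z_1>z^*$, then takes $\bar z^n\downarrow z^*$, chooses $\rho^n\in\Arg_P(\cdot,\bar z^n)$, uses Lemma~\ref{le:boundonheight1*1}(ii) to pass to a convergent subsequence with limit in $\Arg_P(\cdot,z^*)$, and lets $n\to\infty$ in $z_2-\bar z^n\le c_0(\rho_2-\rho^n)$. You need this additional step, or an equivalent, to cover $z_1=z^*$ when $\calZ\neq\emptyset$.
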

\proof{} Let $m(s)= {\Omega^2 r_0^2 \over 2(1-2s r_0^2)}.$ Note that $m$ is Lipschitz continuous on $[0,M_P]$. Here, $M_P$ is defined as in Lemma \ref{le:boundonheight1}.\\
Set 
$$
\alpha(s, z)= m(s)-P(s, z).
$$ 
As $P$ satisfies the first equation in (\ref{eq:conditionP for regularity of the domain} ), we have
 
\begin{equation}\label{eq1: Existence of h}
- Lip(m)- R_0 \leq \partial_{s}\alpha(s,z) \leq Lip(m).
\end{equation} 
Let $z_1, z_2\in (z^*,H]$  such that $z_1<z_2$ and $\rho_i\in Argmin\Pi_{P}(\cdot, z_i)$ $ i=1,2.$
Remark \ref{re:boundonheight2}  ensures that $\alpha(\rho_2,z_2)=\alpha(\rho_1,z_1)=0$ and so 
\begin{equation}\label{eq2: Existence of h}
\alpha(\rho_2,z_1)-\alpha(\rho_2,z_2)=  \alpha(\rho_2,z_1)-\alpha(\rho_1,z_1) .
\end{equation} 
 We exploit the second equation in  (\ref{eq:conditionP for regularity of the domain}) to obtain that
\begin{equation}\label{eq3: Existence of h}
\alpha(\rho_2,z_1)-\alpha(\rho_2,z_2)= \int_{z_2}^{z_1} \partial_z \alpha(\rho_2,z)dz=   \int_{z_1}^{z_2} \partial_z P(\rho_2,z)dz \geq \frac{1}{R_0} (z_2-z_1). 
\end{equation}  
 The second inequality in (\ref{eq1: Existence of h}) leads to
\begin{equation}\label{eq4: Existence of h}
\alpha(\rho_2,z_1)-\alpha(\rho_1,z_1)= \int_{\rho_1}^{\rho_2} \partial_s \alpha(s,z_1)ds \leq Lip(m)  (\rho_2-\rho_1). 
\end{equation} 
We combine (\ref{eq2: Existence of h}- \ref{eq4: Existence of h}) to conclude that 
$$
 (z_2-z_1)\leq R_0 Lip(m)( \rho_2-\rho_1)=c_0(\rho_2-\rho_1)
$$
 for all $z^*< z_1\leq  z_2 \leq H$. Note that if $\calZ=\emptyset$, the argument above is  still valid when $z_1=z^*.$ In the sequel, we assume that  $\calZ\neq\emptyset$. To obtain the inequality (\ref{eq:lip property of h}) when $z_1=z^*$, we consider a sequence $\left\lbrace \bar{z}^n\right\rbrace^{\infty}_{n=1}$ in $(z^*,H]$ such that $\bar{z}^n>z^*$ and  $\left\lbrace \bar{z}^n\right\rbrace^{\infty}_{n=1}$ converges to $z^*$. Let $\rho^n\in Argmin\Pi_{P}(\cdot, \bar{z}^n)\subset [0,M_P]$ so that (\ref{eq:lip property of h}) holds for $(z_1,\rho_1)$ and $(\bar{z}^n, \rho^n)$. Since $Argmin\Pi_{P}(\cdot, \bar{z}^n)\subset [0,M_P]$, we assume without loss of generality that sequence $\left\lbrace \rho^n\right\rbrace^{\infty}_{n=1}$ converges. As $\left\lbrace \bar{z}^n\right\rbrace _{n=1}^{\infty}$ converges to $z^*$, Lemma \ref{le:boundonheight1*1} (ii) ensures that $\left\lbrace \rho^n\right\rbrace^{\infty}_{n=1} $ converges to an element of $ Argmin\Pi_{P}(\cdot, z^*)$. We let $n$ go to $\infty$ in  $ z_2-\bar{z}^n\leq c_0(\rho_2-\rho^n)$ to obtain the desired result.
 (ii) and (iii) follow directly from (\ref{eq:lip property of h}). \endproof

Let $P$ be Lipschitz and satisfy (\ref{eq:conditionP for regularity of the domain}) and $\rho^-$ be as in (\ref{eq: definition h+ h- for boundary}). We define 

\begin{equation}\label{eq: rmk : boundary h: 00} 
\mathfrak{a}(s):=\inf A(s) \qquad \text{with} \qquad  A(s):=\left\lbrace z\in[z^{*},H]: \rho^{-}(z)\geq s\right\rbrace
\end{equation}

for $s\in[0, \rho^-(H)]$.

\hfill
\begin{rmk}\label{rmk: inverse function on intervals}
\begin{enumerate}
\item Let $0\leq s_1 \leq s_2\leq \rho^-(H)$. If $\rho^-(z)\geq s_2$ then $\rho^-(z)\geq s_1$ and so $A(s_2)\subset A(s_1)$.\\
\item Let $\varepsilon >0$ small enough. By Lemma \ref{le:boundonheight1*1} (iii), $\rho^-(z^*+\varepsilon)\geq \rho^+(z^*)$ and so $z^*+\varepsilon\in A( \rho^+(z^*))$. As $z^*$ is a lower bound for  $A( \rho^+(z^*))$, we conclude that $z^*\leq \inf A(\rho^+(z^*))\leq z^*+\varepsilon $. By the arbitrariness of $\varepsilon$ we obtain  $z^*=\mathfrak{a}(\rho^+(z^*))$.\\
\item Let $\rho^-(z^*)\leq s \leq \rho^+(z^*)$. By part (1) of this remark, $A(\rho^-(z^*))\subset A(s)\subset A(\rho^+(z^*))$ and so $\mathfrak{a}(\rho^-(z^*))\leq \mathfrak{a}(s)\leq \mathfrak{a}(\rho^+(z^*))$. We easily checked that $\mathfrak{a}(\rho^-(z^*))=z^*$. In view of part (2) of this remark, we obtain that $\mathfrak{a}(\rho^-(z^*))= \mathfrak{a}(s)= \mathfrak{a}(\rho^+(z^*))=z^*$.
\end{enumerate}
\end{rmk}
\hfill

\begin{lemma}\label{lem: inverse function on intervals}
 Assume $P$ satisfies (\ref{eq:conditionP for regularity of the domain}). Let $z^*$ be as in Lemma \ref{lemma existence of h} such that $z^*<H$. 
\begin{enumerate}
 \item [(i)] $\mathfrak{a}$ is non decreasing on $\in[0,h^{-}(H)]$. 
 \item [(ii)] If $s\in (\rho^{+}(z^{*}),\rho^{-}(H))$ then $\mathfrak{a}(s)$ is an interior point of $[z^*, H]$. 
\item [(iii)] If $s\in[\rho^-(z^*),\rho^{-}(H))$ then   $ s\in [\rho^-(\mathfrak{a}(s)),\rho^+(\mathfrak{a}(s))]$. Moreover, if $s\in[\rho^{-}(z),\rho^{+}(z)]$ for some $z\in[z^*,H)$ then $\mathfrak{a}(s)=z.$\\
\end{enumerate}
\end{lemma}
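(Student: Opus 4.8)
The plan is to read everything off the structural properties of $\rho^-$ and $\rho^+$ already established. Recall from Lemma~\ref{le:boundonheight3} that $\rho^-$ is lower semicontinuous, left continuous and nondecreasing, that $\rho^+$ is upper semicontinuous, right continuous and nondecreasing, that $\rho^-\le\rho^+$, and that $\rho^+(z_1)\le\rho^-(z_2)$ whenever $z^*\le z_1<z_2\le H$; recall also from Lemma~\ref{lem: existence of h inverse 1}(iii) that on $[z^*,H]$ both $\rho^-$ and $\rho^+$ are in fact \emph{strictly} increasing. Since $s\le\rho^-(H)$ forces $H\in A(s)$, the set $A(s)$ is nonempty and $\mathfrak{a}(s)=\inf A(s)$ is well defined on $[0,\rho^-(H)]$; part (i) is then immediate, because $s_1\le s_2$ gives $A(s_2)\subset A(s_1)$ by Remark~\ref{rmk: inverse function on intervals}(1), hence $\mathfrak{a}(s_1)\le\mathfrak{a}(s_2)$.

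For part (ii), with $s\in(\rho^+(z^*),\rho^-(H))$, I would bound $\mathfrak{a}(s)$ on both sides. The bound $\mathfrak{a}(s)<H$ follows from left continuity of $\rho^-$ at $H$ together with $s<\rho^-(H)$: some $z<H$ satisfies $\rho^-(z)>s$, so $z\in A(s)$. The bound $\mathfrak{a}(s)>z^*$ follows from right continuity of $\rho^+$ at $z^*$ together with $\rho^+(z^*)<s$: there is $\delta>0$ with $\rho^-(w)\le\rho^+(w)<s$ for all $w\in[z^*,z^*+\delta]$, so $A(s)\subset(z^*+\delta,H]$.

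For part (iii) I would prove the ``moreover'' statement first, since it is the cleaner identity. Given $s\in[\rho^-(z),\rho^+(z)]$ with $z\in[z^*,H)$: for $w\in(z,H]$ Lemma~\ref{le:boundonheight3}(iii) gives $\rho^-(w)\ge\rho^+(z)\ge s$, so $(z,H]\subset A(s)$ and $\mathfrak{a}(s)\le z$; for $w\in[z^*,z)$ the \emph{strict} increase of $\rho^-$ gives $\rho^-(w)<\rho^-(z)\le s$, so $w\notin A(s)$, and since $A(s)\subset[z^*,H]$ this yields $\mathfrak{a}(s)\ge z$; hence $\mathfrak{a}(s)=z$. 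For the first statement of (iii), fix $s\in[\rho^-(z^*),\rho^-(H))$ and split this interval as $[\rho^-(z^*),\rho^+(z^*)]\cup(\rho^+(z^*),\rho^-(H))$, which is legitimate because picking any $z\in(z^*,H)$ (possible since $z^*<H$) gives $\rho^-(z^*)\le\rho^+(z^*)\le\rho^-(z)<\rho^-(H)$. If $s$ lies in the first piece, Remark~\ref{rmk: inverse function on intervals}(3) gives $\mathfrak{a}(s)=z^*$ and the hypothesis $s\in[\rho^-(z^*),\rho^+(z^*)]$ is exactly $s\in[\rho^-(\mathfrak{a}(s)),\rho^+(\mathfrak{a}(s))]$. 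If $s\in(\rho^+(z^*),\rho^-(H))$, set $z_0=\mathfrak{a}(s)$; by part (ii), $z^*<z_0<H$. Every $w\in[z^*,z_0)$ satisfies $\rho^-(w)<s$ because $z_0=\inf A(s)$, so choosing $w_n\uparrow z_0$ in $[z^*,z_0)$ and invoking left continuity of $\rho^-$ at $z_0$ yields $\rho^-(z_0)\le s$. And if $s>\rho^+(z_0)$ held, right continuity of $\rho^+$ at $z_0$ would give $\delta>0$ with $\rho^+(w)<s$ on $[z_0,z_0+\delta]$, forcing $A(s)\subset(z_0+\delta,H]$ and $\mathfrak{a}(s)\ge z_0+\delta>z_0$, a contradiction; hence $s\le\rho^+(z_0)$ and $s\in[\rho^-(\mathfrak{a}(s)),\rho^+(\mathfrak{a}(s))]$.

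The argument is elementary once the semicontinuity data are in hand, so there is no single hard step; the points that need care are (a) using the correct one-sided continuity in each place --- left continuity of $\rho^-$ for the ``$\le s$'' inequalities and right continuity of $\rho^+$ for the contradiction arguments --- and (b) recognizing that excluding $w\in[z^*,z)$ from $A(s)$ in the ``moreover'' statement genuinely needs the strict monotonicity of $\rho^-$ supplied by Lemma~\ref{lem: existence of h inverse 1}(iii), not merely its monotonicity.
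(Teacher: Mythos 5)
Your proof is correct, and it is built from exactly the same ingredients as the paper's: one-sided continuity of $\rho^{\pm}$ and the ordering $\rho^+(z_1)\le\rho^-(z_2)$ from Lemma~\ref{le:boundonheight3}, together with strict monotonicity on $[z^*,H]$ from Lemma~\ref{lem: existence of h inverse 1}. There are, however, two genuine structural differences worth recording. First, for the lower bound $\mathfrak{a}(s)>z^*$ in (ii), the paper routes the argument through a sequence of continuity points of $\rho^-$ (where $\rho^-=\rho^+$), whereas you invoke right-continuity of $\rho^+$ at $z^*$ directly and then pass the bound to $\rho^-$ via $\rho^-\le\rho^+$; this removes the continuity-point scaffolding and is cleaner. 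Second, and more substantially, you prove the ``moreover'' identity $\mathfrak{a}(s)=z$ \emph{first} and entirely from the definition of $\mathfrak{a}$ (the inclusion $(z,H]\subset A(s)$ from $\rho^+(z)\le\rho^-(w)$, and the exclusion of $[z^*,z)$ from strict increase of $\rho^-$), whereas the paper derives it \emph{last}, as a consequence of already having $s\in[\rho^-(\mathfrak{a}(s)),\rho^+(\mathfrak{a}(s))]$ together with the pairwise disjointness of the intervals $\{[\rho^-(z),\rho^+(z)]\}_{z^*<z<H}$ guaranteed by Lemma~\ref{lem: existence of h inverse 1}(ii). Your version decouples the ``moreover'' claim from the first half of (iii) and dispenses with the disjointness lemma; the paper's version highlights the interval-disjointness picture that is also used elsewhere in the section. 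Both are valid, and your organization is the more economical one.
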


\proof{}
(i) is immediate from remark \ref{rmk: inverse function on intervals} (1).\\
 Let $s\in (\rho^{+}(z^{*}),\rho^{-}(H))$. As $s<\rho^{-}(H)$,
 we have that $H\in A(s)$. We next choose $ z\in(\mathfrak{a}(s), H]$. The characterization of the infimum in $(\ref{eq: rmk : boundary h: 00})$ ensures that there exists $\bar z\in A(s)$ such that $\mathfrak{a}(s)\leq \bar z<z$. $\bar z\in A(s)$ implies that  $\rho^{-}(\bar z)\geq s$ and  as $\rho^-$ increasing, $\rho^-(\bar z)\leq \rho^-(z)$. We conclude that $\rho^{-}(z)\geq s$ and so $z\in A(s)$. Hence $ (\mathfrak{a}(s),H]\subset A(s)$.
We next show that $\mathfrak{a}(s)$ is an interior point of the interval $[z^{*},H]$.

Let $\left\lbrace a_{n}\right\rbrace^{\infty}_{n=1}$ be a sequence in $(z^*,H)$ such that $\left\lbrace a_{n}\right\rbrace^{\infty}_{n=1}$  converges to $z^{*} $. We use the right continuity of $\rho^{+}$ ( cfr Lemma \ref{le:boundonheight3} (iv)) to obtain that $\left\lbrace \rho^{+}(a_{n})\right\rbrace^{\infty}_{n=1}$ converges to $ \rho^{+}(z^{*})$. As $s>\rho^{+}(z^{*})$  we obtain  
\begin{equation}\label{eq: rmk : boundary h: 01} 
s>\rho^{+}(a_{n})>\rho^{+}(z^{*})
\end{equation}
for $n$ large enough. We next choose $a_{n}$ in (\ref{eq: rmk : boundary h: 01}) to be  points of continuity of $\rho^{-}$  so that $\rho^{+}(a_{n})=\rho^{-}(a_{n})$ ( cfr Lemma \ref{le:boundonheight3} (v)). Therefore (\ref{eq: rmk : boundary h: 01}), becomes
\begin{equation}\label{eq: rmk : boundary h: 02} 
s>\rho^{-}(a_{n})>\rho^{+}(z^{*})
\end{equation}
for $n\geq n_0$  for some $n_0\in\mathbb{N}$. In light of the definition of $A(s)$,  the first inequality in (\ref{eq: rmk : boundary h: 02}) implies that $a_n\in (z^*,H)\setminus A(s)$  and so  in view of (\ref{eq: rmk : boundary h: 00}),   $a_{n}\leq\frak{a}(s)$ for all $n\geq n_0$. Since $\left\lbrace a_n\right\rbrace^{\infty}_{n=1} $  converges to $z^{*}$, there exists $p_0>n_0$ such that $a_{p_0}<\mathfrak{a}(s)$. The second inequality in (\ref{eq: rmk : boundary h: 02}) implies that   $\rho^{-}(a_{p_0})>\rho^{+}(z^{*})$. This, combined with $\rho^{+}(z^{*})\geq \rho^{-}(z^{*})$  gives $\rho^{-}(a_{p_0})> \rho^{-}(z^{*})$. By Lemma \ref{lemma existence of h} $\rho^{-}$ is strictly increasing on $[z^{*},H]$ and so $a_{p_0}>z^{*}$. We conclude that 

\begin{equation}\label{eq: rmk : boundary h: 03} 
z^{*}<\mathfrak{a}(s).
\end{equation}

Set $b_{n}=H-\frac{1}{n}$. By the left continuity of $\rho^{-}$(cfr Lemma \ref{le:boundonheight3} iv), $\left\lbrace \rho^{-}(b_{n})\right\rbrace^{\infty}_{n=1} $ converges to  $ \rho^{-}(H)$. This, with the fact that $s<\rho^{-}(H)$ yields
\begin{equation}\label{eq: rmk : boundary h: 04} 
s<\rho^{-}(b_{n})\leq \rho^{-}(H)
\end{equation}
for $n$ large enough. For such $n$, $b_n\in A(s)$ so that  $\mathfrak{a}(s)\leq b_{n}$. This, combined with $b_{n}< H$, yields 
\begin{equation}\label{eq: rmk : boundary h: 05}
 \mathfrak{a}(s)< H.
\end{equation}

From (\ref{eq: rmk : boundary h: 03}) and (\ref{eq: rmk : boundary h: 05}) we conclude that $\mathfrak{a}(s)\in (z^{*},H)$ which proves (ii). Thus, 
 there exists a sequence $ \left\lbrace z_{n}\right\rbrace^{\infty}_{n=1} $ in $(z^{*},H)$  such that $\mathfrak{a}(s)< z_{n}$ and  $\left\lbrace z_{n}\right\rbrace^{\infty}_{n=1}$ converges to $ \mathfrak{a}(s)$. As $(\mathfrak{a}(s),H]\subset A(s)$, we have that $z_n \in A(s)$ and so $\rho^-(z_n)\geq s$. Without loss of generality take $\left\lbrace z_{n}\right\rbrace_{n=1}^{\infty} $ to be  points of continuity of $\rho^{-}$ so that $\rho^{-}(z_n)=\rho^{+}(z_n)$. 
Therefore, as $\rho^{+}$ is right continuous, $\rho^{+}(\mathfrak{a}(s))=\lim_{n\rightarrow\infty} \rho^{+}(z_{n})=\lim_{n\rightarrow\infty} \rho^{-}(z_{n})\geq s$. On the other hand, let $\left\lbrace \bar{z}_{n}\right\rbrace^{\infty}_{n=1}$ be a sequence in $(z^{*},H)$ such that  $\left\lbrace \bar{z}_{n}\right\rbrace^{\infty}_{n=1}$ converges to $  \mathfrak{a}(s)$ and $\bar{z}_{n}< \mathfrak{a}(s)$ so that $\bar z_n\notin A(s)$. Then, necessarily  $\rho^{-}(\bar{z}_{n})<s$.
Hence $\rho^{-}(\mathfrak{a}(s))=\lim_{n\rightarrow\infty}\rho^{-}(\bar{z}_{n})\leq s$ by using the left continuity of $\rho^{-}$. We conclude that
\begin{equation}\label{eq: rmk : boundary l: 0000}
 s\in[\rho^{-}(\mathfrak{a}(s)),\rho^{+}(\mathfrak{a}(s))].
\end{equation}
Note that $[\rho^{-}(\mathfrak{a}(s)),\rho^{+}(\mathfrak{a}(s))]$ is an element of the family $\left\lbrace [\rho^{-}(z),\rho^{+}(z)]\right\rbrace _{z^*< z< H}$  in which elements are  disjoint from each other thanks to
 Lemma \ref{lem: existence of h inverse 1}(ii). If $s\in[\rho^{-}(z_0),\rho^{+}(z_0)]$ for some $z_0\in(z^*, H)$ then necessarily 
 $\rho^{-}(z_0)=\rho^{-}(\mathfrak{a}(s))$ and so  $z_0=\mathfrak{a}(s)$ in light of the fact that $\rho^-$ is strictly increasing on $[z^*,H]$. Note that $\mathbf{a}(s)= z^*$ for any $s\in [\rho^-(z^*),\rho^+(z^*)]$ by Remark \ref{rmk: inverse function on intervals} (3). This concludes the proof of (iii).
\endproof

\begin{corollary}\label{co: existence of h inverse 1}
 Assume the hypotheses in Lemma \ref{lem: inverse function on intervals} hold. The function $\mathfrak{a}$  is Lipschitz continuous.\\
\end{corollary}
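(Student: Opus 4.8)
The plan is to prove the quantitative statement that $\mathfrak{a}$ is Lipschitz on its whole domain $[0,\rho^-(H)]$ with the very constant $c_0$ produced by Lemma \ref{lem: existence of h inverse 1}(i). Since $\mathfrak{a}$ is already known to be non-decreasing (Lemma \ref{lem: inverse function on intervals}(i)), it is enough to establish $\mathfrak{a}(s_2)-\mathfrak{a}(s_1)\le c_0(s_2-s_1)$ for all $0\le s_1\le s_2\le\rho^-(H)$.

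First I would dispose of the ``flat part'' of the domain: $\mathfrak{a}\equiv z^*$ on $[0,\rho^+(z^*)]$. For $s\le\rho^-(z^*)$ this is because $z^*\in A(s)$, hence $\mathfrak{a}(s)=z^*$ (it is the least element of $[z^*,H]$); for $\rho^-(z^*)\le s\le\rho^+(z^*)$ it is exactly Remark \ref{rmk: inverse function on intervals}(3). On this range the desired inequality is trivial.

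The core of the argument is the range $s_1\le s_2$ in $[\rho^-(z^*),\rho^-(H)]$. Put $z_i=\mathfrak{a}(s_i)$, so monotonicity gives $z^*\le z_1\le z_2\le H$. The crucial input is the ``sandwich'' $s_i\in[\rho^-(z_i),\rho^+(z_i)]$ coming from Lemma \ref{lem: inverse function on intervals}(iii) when $s_i<\rho^-(H)$; the endpoint case $s_i=\rho^-(H)$ (which forces $s_1=s_2$) I would treat directly, using that $\rho^-$ is strictly increasing on $[z^*,H]$ (Lemma \ref{lem: existence of h inverse 1}(iii)), whence $A(\rho^-(H))=\{H\}$, $\mathfrak{a}(\rho^-(H))=H$, and $s_i=\rho^-(H)=\rho^-(z_i)$. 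From the sandwich I extract $\rho^+(z_1)\ge s_1$ and $\rho^-(z_2)\le s_2$; since $\rho^+(z_1)$ minimizes $\Pi_P(\cdot,z_1)$ and $\rho^-(z_2)$ minimizes $\Pi_P(\cdot,z_2)$, Lemma \ref{lem: existence of h inverse 1}(i) applies and yields $\mathfrak{a}(s_2)-\mathfrak{a}(s_1)=z_2-z_1\le c_0\bigl(\rho^-(z_2)-\rho^+(z_1)\bigr)\le c_0(s_2-s_1)$.

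Finally I would glue the two ranges: they overlap on $[\rho^-(z^*),\rho^+(z^*)]$, and since $z^*<H$ one has $\rho^+(z^*)<\rho^-(H)$ (by Lemma \ref{le:boundonheight3}(iii) applied to some $z\in(z^*,H)$ together with strict monotonicity of $\rho^-$), so the two ranges cover $[0,\rho^-(H)]$. A pair $s_1\le s_2$ straddling $\rho^+(z^*)$ is then handled by inserting $\rho^+(z^*)$ as an intermediate value and adding the estimates from the two ranges. I do not expect a genuine obstacle here: the two non-routine ingredients, Lemmas \ref{lem: existence of h inverse 1}(i) and \ref{lem: inverse function on intervals}(iii), are already available, and the only mildly delicate points are the bookkeeping at the endpoints $s=\rho^-(z^*)$ and $s=\rho^-(H)$ of the domain and checking that $\rho^+(z_1)$ and $\rho^-(z_2)$ are admissible arguments for Lemma \ref{lem: existence of h inverse 1}(i).
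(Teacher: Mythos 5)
Your proof is correct and rests on the same two ingredients as the paper's (the quantitative bound of Lemma \ref{lem: existence of h inverse 1}(i) and the sandwich inclusion of Lemma \ref{lem: inverse function on intervals}(iii)), but it executes the core estimate more cleanly. Where the paper approximates $\mathfrak{a}(s_1)$ from the right by continuity points $\bar z^n$ of $\rho^-$ (so as to get a minimizer $\rho^-(\bar z^n)\ge s_1$) and then passes to the limit, you observe that $\rho^+(z_1)\in \Arg_P(\cdot,z_1)$ is itself a legitimate input to Lemma \ref{lem: existence of h inverse 1}(i), and combined with $\rho^-(z_2)\in\Arg_P(\cdot,z_2)$ and the sandwich $s_1\le\rho^+(z_1)$, $\rho^-(z_2)\le s_2$ this gives $\mathfrak{a}(s_2)-\mathfrak{a}(s_1)\le c_0(\rho^-(z_2)-\rho^+(z_1))\le c_0(s_2-s_1)$ in one line, with no limiting argument. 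The paper also handles the transition at $\rho^+(z^*)$ via a separate continuity check, whereas you cover $[\rho^-(z^*),\rho^-(H)]$ wholesale and glue by inserting $\rho^+(z^*)$ as an intermediate point; both work. One small slip: your parenthetical ``which forces $s_1=s_2$'' in the endpoint discussion is only accurate when $s_1=\rho^-(H)$; when merely $s_2=\rho^-(H)$ you still need (and in fact get from your own $A(\rho^-(H))=\{H\}$ computation) the one-sided sandwich $\rho^-(\mathfrak{a}(s_2))\le s_2$, after which the core estimate applies unchanged — so the content is fine even if the parenthetical reads too strong.
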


 \proof{}
 We first note that as $\mathfrak{a}$ is non decreasing, we only need to show that 
\begin{equation}\label{l tilde is lip} 
\mathfrak{a}(s_{2})-\mathfrak{a}(s_{1})\leq c_0(s_{2}-s_{1} )
\end{equation} 
for all $s_{2}\geq s_{1}$ in $[0,\rho^{-}(H)]$ and some constant $c_0$.

(a)  Assume $\rho^{+}(z^{*})<s_{1}< \rho^-(H)$. Lemma \ref{lem: inverse function on intervals} (iii) ensures that $s_1\leq \rho^+(\mathfrak{a}(s_1))$ so that $\rho^{+}(z^{*})<\rho^+(\mathfrak{a}(s_1))$. As $\rho^+$ is strictly increasing on $[z^*,H]$ (see Lemma \ref{lem: existence of h inverse 1}), we obtain that $z^{*}< \mathfrak{a}(s_1)$. Let $ s_{2}\in [0, \rho^-(H)]$ such that $s_1<s_2$.  As $\mathfrak{a}$ is non decreasing, $\mathfrak{a}(s_1)\leq\mathfrak{a}(s_2)$. Thus, $z^*<\mathfrak{a}(s_1)\leq\mathfrak{a}(s_2).$ If $\mathfrak{a}(s_1)=\mathfrak{a}(s_2)$ then (\ref{l tilde is lip}) holds.
In the sequel, we assume $z^*<\mathfrak{a}(s_1)<\mathfrak{a}(s_2)$.
  Choose   $\bar{z}^{n}>\mathfrak{a}(s_1)$ such that $\left\lbrace \bar{z}^{n}\right\rbrace^{\infty}_{n=1} $ converges to $\mathfrak{a}(s_1)$ and  $\bar{z}^{n}$ are points of continuity of $\rho^{-}$,  that is, $\rho^{-}(\bar{z}^{n})=\rho^{+}(\bar{z}^{n})$. We use the fact  that $\rho^{+}$ is non decreasing to obtain  
$$\rho^{+}(\mathfrak{a}(s_{1}))\leq \rho^{+}(\bar{z}^{n})=\rho^{-}(\bar{z}^{n}).$$
 This, with the fact that $s_{1}\leq \rho^{+}(\mathfrak{a}(s_1))$ implies that $s_{1}\leq \rho^{-}(\bar{z}^{n})$ which we use  along with (\ref{eq:lip property of h}) and the fact that $\rho^{-}(\mathfrak{a}(s_2))\leq s_{2}$ (see Lemma \ref{lem: inverse function on intervals} (iii)) to get 
$$ \mathfrak{a}(s_2)-\bar{z}^{n}\leq c_0(\rho^{-}(\mathfrak{a}(s_2))-\rho^{-}(\bar{z}^{n}))\leq c_0(s_{2}-s_{1}).$$
By letting $n\rightarrow\infty$ we obtain (\ref{l tilde is lip}) for $\rho^{+}(z^{*})<s_{1}<s_2 \leq \rho^-(H)$.\\
By Lemma \ref{lem: inverse function on intervals} (iii),  $\mathfrak{a}(s)=z^{*}$ for all $s\in [ \rho^{-}(z^{*}), \rho^{+}(z^{*})].$ To show that $\mathfrak{a}$ is Lipschitz continuous on $[ \rho^{-}(z^{*}), \rho^{-}(H)]$, it suffices to show that  $\mathfrak{a}$ is continuous at $\rho^{+}(z^{*})$ and more precisely right continuous at  $\rho^{+}(z^{*})$.\\
Let $\rho^{+}(z^{*})\leq s_{n}\leq\rho^{-}(H)$ such that $\left\lbrace s_{n}\right\rbrace^{\infty}_{n=1} $ converges to  $ \rho^{+}(z^{\ast})$. By Lemma \ref{lem: inverse function on intervals} (iii), $s_{n}\in [\rho^-(\mathfrak{a}(s_n)),\rho^+(\mathfrak{a}(s_n)) ]$ so that  $\rho^-(\mathfrak{a}(s_n))$ converges to $\rho^{+}(z^{\ast})$. We use (\ref{eq:lip property of h}) to obtain that 
\begin{equation}\label{eq:lip property of h used1}
 0\leq\mathfrak{a}(s_n)-z^*\leq c_0(\rho^-(\mathfrak{a}(s_n))-\rho^+(z^*)).
\end{equation}

As $\rho^-(\mathfrak{a}(s_n))$ converges to $\rho^{+}(z^{\ast})$, (\ref{eq:lip property of h used1}) implies that $\mathfrak{a}(s_{n})$  converges to $z^{*}=\mathfrak{a}( \rho^{+}(z^{\ast}))$. We conclude that $\mathfrak{a}$ is continuous at $\rho^{+}(z^{\ast})$ and so Lipschitz on $[\rho^{-}(z^{*}), \rho^{-}(H)]$. In the case where $\rho^-(z^*)>0$, we have $z^*=0$ by definition of $z^*$. But since $0=z^*\leq \mathfrak{a}(s)\leq\mathfrak{a}(\rho^-(z^*))=z^*= 0$ for any $s\in[0,\rho^-(z^*)]$, we conclude that (\ref{l tilde is lip}) holds on $ [ 0, \rho^{-}(H)].$ \endproof\\
Set 
$$\mathfrak{D}_{\rho^-}=\left\lbrace (s,z): z^{*}\leq z\leq H, 0\leq s\leq \rho^{-}(z)\right\rbrace $$
and
$$Q=\left\lbrace (s,z): z^{*}< z< H, 0< s< \rho^{-}(H),\;\; z^*<\mathfrak{a}(s)<z \right\rbrace. $$

\begin{lemma}\label{lem: squeeze the domain D}
  Assume the hypotheses in Lemma \ref{lem: inverse function on intervals} hold. Then
 $Q\subset \mathfrak{D}_{\rho^-}\subset \bar{Q}$ and $Q$ is open.\\
\end{lemma}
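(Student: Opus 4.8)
The plan is to prove the two inclusions $Q\subset\mathfrak{D}_{\rho^-}$ and $\mathfrak{D}_{\rho^-}\subset\bar Q$ separately, and then establish openness of $Q$. Throughout I will lean on the properties of $\mathfrak{a}$, $\rho^-$, $\rho^+$ collected in Lemma \ref{lem: inverse function on intervals}, Corollary \ref{co: existence of h inverse 1}, and Lemma \ref{le:boundonheight3}, in particular: $\mathfrak{a}$ is nondecreasing and Lipschitz; $\rho^-$ is strictly increasing and left continuous on $[z^*,H]$; $\rho^+$ is right continuous; and the ``almost-inverse'' relations $s\in[\rho^-(\mathfrak{a}(s)),\rho^+(\mathfrak{a}(s))]$ and $\mathfrak{a}(s)=z$ whenever $s\in[\rho^-(z),\rho^+(z)]$.

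\textbf{Step 1: $Q\subset\mathfrak{D}_{\rho^-}$.} Take $(s,z)\in Q$, so $z^*<z<H$, $0<s<\rho^-(H)$ and $z^*<\mathfrak{a}(s)<z$. Since $\mathfrak{a}(s)>z^*$ forces $s\ge\rho^+(z^*)$ (otherwise $\mathfrak{a}(s)=z^*$ by Remark \ref{rmk: inverse function on intervals}(3), or directly because $\mathfrak{a}$ is constant $=z^*$ on $[\rho^-(z^*),\rho^+(z^*)]$), in particular $s\ge\rho^-(z^*)$, and Lemma \ref{lem: inverse function on intervals}(iii) applies: $s\le\rho^+(\mathfrak{a}(s))$. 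Now I would use that in the interior $(z^*,H)$, Lemma \ref{le:boundonheight3}(iii) gives $\rho^+(\mathfrak{a}(s))\le\rho^-(z)$ whenever $\mathfrak{a}(s)<z$; hence $s\le\rho^-(z)$, which together with $z^*\le z\le H$ means $(s,z)\in\mathfrak{D}_{\rho^-}$.

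\textbf{Step 2: $\mathfrak{D}_{\rho^-}\subset\bar Q$.} Fix $(s,z)\in\mathfrak{D}_{\rho^-}$, i.e. $z^*\le z\le H$ and $0\le s\le\rho^-(z)$. I would construct a sequence in $Q$ converging to $(s,z)$. First handle the generic case $0<s<\rho^-(z)$ and $z^*<z<H$: by Lemma \ref{lem: inverse function on intervals}(iii), $\mathfrak{a}(s)\in[z^*,H]$ with $s\in[\rho^-(\mathfrak{a}(s)),\rho^+(\mathfrak{a}(s))]$; since $s<\rho^-(z)$ and $\rho^-$ is strictly increasing, $\mathfrak{a}(s)<z$; and if $\mathfrak{a}(s)=z^*$ one perturbs $s$ slightly upward (using that $\mathfrak a$ is nondecreasing and that $\rho^-(H)>s$) to get $\mathfrak a(s')>z^*$ with $s'$ arbitrarily close to $s$ — here I should be careful, since $\mathfrak a$ could jump; but because $\mathfrak a$ is Lipschitz (Corollary \ref{co: existence of h inverse 1}) it is in fact continuous, so small perturbations of $s$ give small perturbations of $\mathfrak a(s)$, and strict monotonicity of $\rho^-$ forces $\mathfrak a(s)<z$ strictly once $s<\rho^-(z)$. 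Thus $(s,z)\in Q$ already in the generic case. The boundary cases $s=0$, $s=\rho^-(z)$, $z=z^*$, or $z=H$ are then obtained as limits: e.g. for $s=\rho^-(z)$ take $s_k\uparrow\rho^-(z)$, $z_k\downarrow z$ appropriately; for $z=H$ take $z_k\uparrow H$ with $s_k<\rho^-(z_k)$; for $z=z^*$ use that $\rho^-(z^*)=0$ on $\mathcal{Z}\ne\emptyset$ unless $z^*=0$, reducing to $s=0$. Each such limit point lies in $\bar Q$.

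\textbf{Step 3: $Q$ is open.} This is where the Lipschitz regularity of $\mathfrak{a}$ does the real work. The set $Q$ is the intersection of the open strip $\{z^*<z<H,\ 0<s<\rho^-(H)\}$ with $\{z^*<\mathfrak{a}(s)\}$ and $\{\mathfrak{a}(s)<z\}$. The map $(s,z)\mapsto\mathfrak{a}(s)$ is continuous (indeed Lipschitz in $s$, constant in $z$), so $\{\mathfrak{a}(s)<z\}$ and $\{\mathfrak{a}(s)>z^*\}$ are open; $Q$ is therefore a finite intersection of open sets. \textbf{The main obstacle} I anticipate is Step 2: showing that the generic-case points actually lie in $Q$ and not merely in $\bar Q$ requires simultaneously arranging $\mathfrak{a}(s)>z^*$ and $\mathfrak{a}(s)<z$ with strict inequalities, and this is exactly where one must invoke strict monotonicity of $\rho^\pm$ on $[z^*,H]$ together with continuity of $\mathfrak a$; near the ``corner'' $s=\rho^+(z^*)$, where $\mathfrak a$ transitions from being pinned at $z^*$ to moving, one needs the right-continuity argument of Corollary \ref{co: existence of h inverse 1} to guarantee no gap opens up, so that the closure $\bar Q$ genuinely recaptures all of $\mathfrak{D}_{\rho^-}$.
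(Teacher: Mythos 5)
Your Steps 1 and 3 are correct and match the paper's (extremely terse) argument: for Step 1, the chain $s\le\rho^+(\mathfrak{a}(s))\le\rho^-(z)$ using Lemma \ref{lem: inverse function on intervals}(iii) and the order property $\rho^+(z_1)\le\rho^-(z_2)$ for $z_1<z_2$ is exactly right; and Step 3 is the same one-line observation the paper makes, that continuity of $\mathfrak{a}$ makes $\{z^*<\mathfrak{a}(s)<z\}$ open.

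\textbf{However, Step 2 has a genuine gap, and the patch you propose is the wrong tool.} You write that when $\mathfrak{a}(s)=z^*$ one can ``perturb $s$ slightly upward to get $\mathfrak{a}(s')>z^*$ with $s'$ arbitrarily close to $s$,'' and you justify this by invoking the Lipschitz continuity of $\mathfrak{a}$ from Corollary \ref{co: existence of h inverse 1}. But the obstruction here is not a potential jump of $\mathfrak{a}$; it is the opposite: $\mathfrak{a}$ is \emph{constant} on the whole interval $[0,\rho^+(z^*)]$ (this follows from Remark \ref{rmk: inverse function on intervals}(2)-(3) and the fact that $z^*\in A(s)$ for $s<\rho^-(z^*)$). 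Continuity does nothing to let you escape a plateau by a small perturbation: if $0<s<\rho^+(z^*)$, then every $s'$ near $s$ still satisfies $\mathfrak{a}(s')=z^*$, so $(s',z')\notin Q$. To reach a point of $Q$ you must take $s'>\rho^+(z^*)$, which is not an arbitrarily small perturbation. Consequently your argument does not establish $(s,z)\in\bar Q$ for $0<s<\rho^+(z^*)$, and indeed the containment $Q\subset\{s>\rho^+(z^*)\}$ shows $\bar Q\subset\{s\ge\rho^+(z^*)\}$, so such points cannot be recovered by closure at all. Since $\rho^-(z^*)=0$ when $z^*>0$ (by left continuity and the definition of $z^*$) but $\rho^+(z^*)$ need not be $0$, the region $\{(s,z):0<s<\rho^+(z^*),\,z^*<z<H,\,s\le\rho^-(z)\}$ is a priori nonempty and lies in $\mathfrak{D}_{\rho^-}\setminus\bar Q$. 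So either one must add the observation that $\rho^-$ is right continuous at $z^*$ (equivalently $\rho^+(z^*)=\rho^-(z^*)$) under the standing hypotheses, or the inclusion must be proved in a way that does not pass through the constraint $\mathfrak{a}(s)>z^*$. Your proposal recognizes that something delicate is happening near $s=\rho^+(z^*)$, but misdiagnoses it as a continuity issue; the real issue is the flat piece of $\mathfrak{a}$, and it is not addressed.

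Two more minor points. In Step 1 you should have $s>\rho^+(z^*)$, not $s\ge\rho^+(z^*)$, since by Remark \ref{rmk: inverse function on intervals}(2) we have $\mathfrak{a}(\rho^+(z^*))=z^*$; this does not affect the conclusion because all you need is $s\ge\rho^-(z^*)$ to invoke Lemma \ref{lem: inverse function on intervals}(iii). And when you invoke Lemma \ref{le:boundonheight3}(iii), note that the order relation $\rho^+(z_1)\le\rho^-(z_2)$ requires the strict inequality $z_1<z_2$ (as the proof of Lemma \ref{le:boundonheight1*1}(iii) makes clear); you do use it with $\mathfrak{a}(s)<z$, which is strict, so this is fine but worth stating.
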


\proof{} As $\mathfrak{a}$ is continuous, $Q$ is open.  $Q\subset \mathfrak{D}_{\rho^-} \subset \bar{Q}$ is straightforward.
\endproof

We observe that the boundary of $Q$ is the union of the following sets: $$Q_{1}=\left\lbrace (s,z): 0\leq s\leq \rho^{-}(H),\;\;z= \mathfrak{a}(s)\right\rbrace,\; Q_{2}=\left\lbrace (s,z): z=H,\; 0\leq s\leq  \rho^{-}(H) \right\rbrace $$
$$Q_{3}=\left\lbrace (s,z): s=0\;\;z^{*} \leq z \leq H \right\rbrace,\; Q_{4}=\left\lbrace (s,z): z=z^*,\; 0\leq s\leq \rho^{-}(z^*) \right\rbrace.  $$

\begin{prop}\label{lem: Lip boundary}
 Assume the hypotheses in Lemma \ref{lem: inverse function on intervals} hold. The boundary of the domain $D_{\rho^-}$ is  the union of the graphs of Lipschitz continuous functions.\\
\end{prop}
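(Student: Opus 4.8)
The plan is to describe $\partial D_{\rho^-}$ explicitly as a finite union of the four sets $Q_1,\dots,Q_4$ introduced just above the statement, together with the degenerate left edge lying over $[0,z^*]$, and then to recognise each of these as the graph of a Lipschitz function. The essential point is that the non-trivial part of $\partial D_{\rho^-}$ has to be described through the Lipschitz function $\mathfrak{a}$ of Corollary \ref{co: existence of h inverse 1} and \emph{not} through $\rho^-$ itself. First I would split off the part of $D_{\rho^-}$ lying over $[0,z^*]$: there $\rho^-\equiv 0$ by Lemma \ref{lemma existence of h}, so this part is the segment $S:=\{0\}\times[0,z^*]$ and $D_{\rho^-}=\mathfrak{D}_{\rho^-}\cup S$. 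Since $S$ is closed with empty interior, $\partial S=S$, and therefore $\partial D_{\rho^-}\subseteq\partial\mathfrak{D}_{\rho^-}\cup S$.

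Next, Lemma \ref{lem: squeeze the domain D} gives $Q\subseteq\mathfrak{D}_{\rho^-}\subseteq\bar Q$ with $Q$ open; taking closures yields $\overline{\mathfrak{D}_{\rho^-}}=\bar Q$ and taking interiors yields $Q\subseteq\mathrm{int}\,\mathfrak{D}_{\rho^-}$. Hence
\[
\partial\mathfrak{D}_{\rho^-}=\overline{\mathfrak{D}_{\rho^-}}\setminus\mathrm{int}\,\mathfrak{D}_{\rho^-}\ \subseteq\ \bar Q\setminus Q\ =\ \partial Q\ =\ Q_1\cup Q_2\cup Q_3\cup Q_4,
\]
the last identity being the one recorded immediately before the statement. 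Combining this with the first paragraph and using $Q_3\cup S=\{0\}\times[0,H]$, I get $\partial D_{\rho^-}\subseteq Q_1\cup Q_2\cup Q_4\cup\bigl(\{0\}\times[0,H]\bigr)$.

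It remains to recognise each of these sets as the graph of a Lipschitz function and to verify the reverse inclusions. By definition $Q_1=\{(s,\mathfrak{a}(s)):0\le s\le\rho^-(H)\}$ is the graph, over $s\in[0,\rho^-(H)]$, of $\mathfrak{a}$, which is Lipschitz by Corollary \ref{co: existence of h inverse 1}; $Q_2$ is the graph over $[0,\rho^-(H)]$ of the constant function $H$; $\{0\}\times[0,H]$ is the graph over $[0,H]$ of the constant function $0$; and $Q_4$ is a single point when $z^*>0$ (because then $\rho^-(z^*)=0$) and the graph over $[0,\rho^-(z^*)]$ of the constant function $z^*$ when $z^*=0$. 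Conversely, each of these sets is readily seen to lie in $\partial D_{\rho^-}$: the edges $Q_2$, $Q_4$ and $\{0\}\times[0,H]$ because $D_{\rho^-}\subseteq\mathcal{D}$ forces them onto $\partial D_{\rho^-}$, and the curved piece $Q_1$ by Lemma \ref{lem: inverse function on intervals}(iii) together with the one-sided continuity and strict monotonicity of $\rho^\pm$ from Lemmas \ref{le:boundonheight3} and \ref{lem: existence of h inverse 1}(iii). This yields the asserted description of $\partial D_{\rho^-}$ as a finite union of graphs of Lipschitz continuous functions.

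The main obstacle is conceptual rather than computational: $\rho^-$ is only monotone and left-continuous and may genuinely jump, so the set $\{(s,\rho^-(z))\}$ fails to capture the vertical segments of $\partial D_{\rho^-}$ that sit over the jump points of $\rho^-$. The role of $\mathfrak{a}$ — and of the Lipschitz bound (\ref{eq:lip property of h}) underlying Corollary \ref{co: existence of h inverse 1} — is precisely to fill in those segments, so that $Q_1$ (rather than the graph of $\rho^-$) is the correct Lipschitz description of the non-trivial part of the boundary; what then remains is only the bookkeeping near $z=z^*$ and keeping straight which variable each of the four graphs is taken over.
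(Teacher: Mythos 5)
Your proof is correct and follows essentially the same route as the paper: decompose $D_{\rho^-}$ as $\mathfrak{D}_{\rho^-}$ together with the segment over $[0,z^*]$, squeeze $\mathfrak{D}_{\rho^-}$ between the open set $Q$ and $\bar Q$ via Lemma \ref{lem: squeeze the domain D}, and read off the boundary from $\partial Q=Q_1\cup Q_2\cup Q_3\cup Q_4$, the nontrivial piece $Q_1$ being the graph of the Lipschitz function $\mathfrak{a}$ from Corollary \ref{co: existence of h inverse 1}. You simply spell out the set-theoretic bookkeeping (including the reverse inclusion) that the paper compresses into ``the result follows immediately,'' and you are in fact slightly more careful than the paper in observing that the squeeze $Q\subseteq\mathfrak{D}_{\rho^-}\subseteq\bar Q$ only gives the containment $\partial\mathfrak{D}_{\rho^-}\subseteq\partial Q$ directly rather than equality of boundaries.
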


\proof{} Lemma  \ref{lem: squeeze the domain D} ensures that $Q$ and $\mathfrak{D}_{\rho^-}$ have the same boundary and clearly,
$$ D_{\rho^-}= \mathfrak{D}_{\rho^-}\cup \left\lbrace (s,z) : s=0,\; 0\leq z\leq z^*\right\rbrace. $$
The result follows immediately.\\

\endproof
\subsection{Existence and uniqueness of a solution for the Monge -Ampere equation}\hfill
\hfill

Here, we  prove the main theorem of the section.\\
\hfill

\proof{ of Theorem \ref{thm : existence and uniqueness MA}}
Proposition  \ref{Proposition MA} (iii) and (iv) show that  (\ref{eq:primal1})    has a unique minimizer $ \bar \rho$ such that $\bar \rho$ is monotone non decreasing and bounded away from $\frac{1}{2r_0^2}$,  and (\ref{eq:statement of dual problem}) has a unique maximizer $(\bar P,\bar\Psi )\in \calU_0$  so that (\ref{eq:Monge Ampere}) has a solution.  This variational solution is weak if $\sigma<<\calL^2$ and weak dual if  $\sigma<\nless\calL^2$. Proposition  \ref{Proposition MA} (iii) and then (ii)  guarantee that $I(\bar{\gamma},\bar \rho)=J[\sigma](\bar P,\bar{\Psi})$ where $\id\times\nabla \bar {P}$ pushes $e(s)\chi_{D_{\bar \rho}}$ to $\bar{\gamma}$. In view of Proposition  \ref{Proposition MA} (ii), we can assume without loss of generality that $\bar\rho (z)$ is the smallest value of $Argmin\Pi_{P}(\cdot,z)$ for all $z\in [0,H]$. As $e(s)\chi_{D_{\bar \rho}}$ is a probability measure and $\bar \rho$ monotone non decreasing, $\left\lbrace \bar \rho> 0 \right\rbrace $  is of positive Lebesgue measure so that $z*<H$ ($z^*$ is  as defined in Lemma \ref{lemma existence of h}). Note that if  $\spt(\sigma)\subset[\frac{1}{R_0},R_0]\times [0,R_0]$  then by (\ref{eq:subgradients}), $\bar P$ satisfies (\ref{eq:conditionP for regularity of the domain}). In this case, we  use Lemma \ref{lem: Lip boundary} to conclude that  $\partial D_{\bar \rho}$  is  the finite union of  graphs of Lipschitz continuous functions.
\endproof

\section{Some stability results}
\label{sec:Some stability results}

Theorem \ref{thm : existence and uniqueness MA} generates two operators  $\calH$,  $\bar\calH$ defined  in the following way:
To any $\sigma\in \mathcal{P}_{[R_0]}$ the operator $\calH$ associates   $\rho$, the minimizer  in  (\ref{eq:primal1}) and $\bar\calH$ associates the convex functions $(P,\Psi)\in \mathcal{U}_0$, the maximizer in (\ref{eq:statement of dual problem}).\\

We refer the reader to \cite{Sedjro} for the proofs of the following lemmas.

\begin{lemma}\label{lem: maximizers are precompact} 
 Let $\left\lbrace\sigma_n\right\rbrace^{\infty}_{n=1}$ and $\sigma$ be elements in $\mathcal{P}_{[R_0]}$  such that $\left\lbrace\sigma_n\right\rbrace^{\infty}_{n=1}$ converges to $\sigma$ narrowly. If $(P_n, \Psi_n)=\bar \calH (\sigma_n)$ then  $\left\lbrace P_n\right\rbrace^{\infty} _{n=1}$ and  $\left\lbrace \Psi_n\right\rbrace^{\infty} _{n=1}$ are precompact respectively in  $C(\bar{\mathcal{D}})$  and $C([0,R_0]^2)$.
\end{lemma}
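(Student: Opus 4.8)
The plan is to apply the Arzel\`a--Ascoli theorem, so the task reduces to showing that $\{P_n\}$ and $\{\Psi_n\}$ are uniformly bounded and uniformly equicontinuous (in fact uniformly Lipschitz). First I would normalize: since $(P_n,\Psi_n)\in\calU_0$, the pair $(P_n-P_n(0,0),\Psi_n+P_n(0,0))$ is still in $\calU_0$ and, because adding a constant $\lambda$ to $\Psi$ and subtracting it from $P$ changes $J[\sigma_n]$ by the additive term $-\lambda$ (see the computation preceding Lemma \ref{le:aprioribound1}), this cannot be a maximizer unless $P_n(0,0)=0$ already; so without loss of generality $P_n(0,0)=0$ for all $n$. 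This is exactly the normalization under which Remark \ref{re:LipschitzPPsi} applies.

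Next I would extract the uniform Lipschitz bounds. Since $(P_n,\Psi_n)\in\calU_0$, equation (\ref{eq:subgradients}) gives $\partial_\cdot P_n(p)\subset\Delta\subset[0,R_0]^2$ and $\partial_\cdot\Psi_n(q)\subset\mathcal{D}\subset[0,1/2r_0^2)\times[0,H]$, so $P_n$ is $R_0\sqrt2$-Lipschitz on $\bar{\mathcal D}$ and $\Psi_n$ is Lipschitz on $[0,R_0]^2$ with a constant depending only on $r_0,H$; in particular (\ref{eq:conditionP}) holds uniformly in $n$. The uniform $L^\infty$ bound on $\{P_n\}$ is then (\ref{eq:LipschitzPPsi1}): $|P_n(p)|\le R_0H_0$ with $H_0=\tfrac{1}{2r_0^2}+H$, using $P_n(0,0)=0$. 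For $\{\Psi_n\}$ on $[0,R_0]^2$, since $\Psi_n=P_n^\ast$ and $0\le\langle p,q\rangle\le R_0H_0$ for $p\in\mathcal D$, $q\in\Delta$, the bounds $-R_0H_0\le\Psi_n(q)\le 2R_0H_0$ from Remark \ref{re:LipschitzPPsi} apply, giving the uniform bound on $[0,R_0]^2$ (note $\Delta\subset[0,R_0]^2$ and the Legendre transform over the compact $[0,R_0]^2$ only enlarges the range in a controlled way). Applying Arzel\`a--Ascoli on the compact sets $\bar{\mathcal D}$ and $[0,R_0]^2$ yields subsequences converging uniformly, which is precompactness in $C(\bar{\mathcal D})$ and $C([0,R_0]^2)$ respectively.

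The only genuine subtlety, and the place I would be most careful, is the normalization step: one must verify that the maximizer $\bar\calH(\sigma_n)$ indeed satisfies $P_n(0,0)=0$ (or can be replaced by one that does without losing the conclusion). In the construction in Proposition \ref{Proposition MA}(i)--(iv) the maximizer in $\calU_0$ is pinned down only up to the equivalence described in part (iv), so strictly speaking $\bar\calH$ should be understood as returning the normalized representative with $P(0,0)=0$; under that convention the argument above is complete. Everything else is the routine packaging of the a priori estimates already established in Remark \ref{re:LipschitzPPsi} and equations (\ref{eq:subgradients})--(\ref{eq:LipschitzPPsi1}) into an Arzel\`a--Ascoli argument, and indeed the detailed verification is deferred to \cite{Sedjro}.
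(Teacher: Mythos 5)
Your overall scheme --- apply Arzel\`a--Ascoli after extracting uniform Lipschitz and $L^\infty$ bounds from \eqref{eq:subgradients} --- is the right one, and the Lipschitz estimates you cite are fine. The gap is in the normalization step, and it is a real one.

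You claim that shifting $(P,\Psi)\mapsto(P-\lambda,\Psi+\lambda)$ changes $J[\sigma]$ by the additive term $-\lambda$, and conclude that a maximizer must already have $P_n(0,0)=0$. Neither part holds. For the first: $\Pi_{P-\lambda}(\rho,z)=\Pi_P(\rho,z)+\lambda\int_0^\rho e(s)\,ds$, and the infimum defining $j$ is taken over \emph{all} $\rho\in\calH_0$, not just those with $\int_{D_\rho}e\,ds\,dz=1$, so $j(P-\lambda)\neq j(P)+\lambda$ in general; hence $\lambda\mapsto J[\sigma](P-\lambda,\Psi+\lambda)$ is not affine. (If it were affine and nonconstant, no maximizer would exist at all, contradicting Proposition~\ref{Proposition MA}(i).) For the second: Proposition~\ref{Proposition MA}(iv) pins down the maximizer in $\calU_0$ uniquely (so the value $P_n(0,0)$ is determined by $\sigma_n$ and the operator $\bar\calH$), but nothing forces that value to be $0$, and the paper does \emph{not} define $\bar\calH$ with the convention $P(0,0)=0$. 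So you cannot assume $P_n(0,0)=0$.

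What you actually need, and what does hold, is a \emph{uniform bound} on $|P_n(0,0)|$, and the paper has already built the tool: Lemma~\ref{le:aprioribound1}. Fix the constant pair $(\bar P_0,\bar\Psi_0)=(\bar c_0,0)\in\calU$ from the proof of Proposition~\ref{Proposition MA}(i); since every $\sigma_n$ is supported in $[0,R_0]^2$, the quantity $J[\sigma_n](\bar P_0,\bar\Psi_0)$ is bounded below uniformly in $n$, giving a single $C_0$ with $J[\sigma_n](P_n,\Psi_n)\geq\sup_{\calU}J[\sigma_n]\geq -C_0$ for all $n$. Writing $\lambda_n:=-P_n(0,0)$ and $\tilde P_n:=P_n-P_n(0,0)$, $\tilde\Psi_n:=\Psi_n+P_n(0,0)$, one has $(\tilde P_n,\tilde\Psi_n)\in\calU_0$ with $\tilde P_n(0,0)=0$ and $J[\sigma_n](\tilde P_n-\lambda_n,\tilde\Psi_n+\lambda_n)\geq -C_0$; Lemma~\ref{le:aprioribound1} then yields $|\lambda_n|\leq C_1$ with $C_1$ independent of $n$. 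With $|P_n(0,0)|\leq C_1$, your Lipschitz bounds upgrade the pointwise bound \eqref{eq:LipschitzPPsi1} to $|P_n|\leq C_1+R_0H_0$ on $\bar{\mathcal D}$ and, via $\Psi_n=P_n^\ast$, a corresponding uniform bound on $\Psi_n$ over $[0,R_0]^2$. From there Arzel\`a--Ascoli gives the precompactness, exactly as you intend.
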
\\

 The next lemma uses the Helly theorem,  standard compactness results for optimal  plans and uniqueness results in theorem \ref{thm : existence and uniqueness MA}. \\

\begin{lemma}\label{lem :conv of the gradients}   
 Let  $\left\lbrace \sigma_n\right\rbrace^{\infty} _{n=1}$, $\sigma$ be elements in $\mathcal{P}_{[R_0]}$  and let $\rho_n=\calH(\sigma_n)$, $\rho=\calH(\sigma)$, $\bar\calH (\sigma)=(P,\Psi)$ and $\bar\calH (\sigma_n)=(P_n,\Psi_n)$  for $n\geq 1$. Assume that $\left\lbrace\sigma_n\right\rbrace^{\infty}_{n=1}$ converges narrowly to $\sigma$. Then 

(i)  $\left\lbrace \rho_n\right\rbrace^{\infty}_{n=1}$ converges pointwise to $\rho$  and so
$e(s)\chi_{D_{\rho_n}}$ converges narrowly to $e(s)\chi_{D_{ \rho}}.$ Moreover, if $\left\lbrace P_n\right\rbrace^{\infty}_{n=1}$ is uniformly convergent in $C(\bar{\mathcal{D}})$ then there exists $M>0$ such that 
\begin{equation}\label{eq :h_n and h are bounded} 
 2r_0^2M<1 \qquad \hbox{and} \qquad 0\leq\rho_n,  \rho <M \quad \text{ for } n\geq 1.
\end{equation}
(ii)   
\begin{equation}\label{eq : conv gradPn to gradP} 
 \nabla P_{n}\longmapsto \nabla P    \qquad  \mathcal{L}^2-a.e \text{  in  }  \mathcal{D}.
\end{equation}
Moreover, if, in addition,   $\left\lbrace \sigma_n\right\rbrace^{\infty} _{n=1}$, $\sigma$ are absolutely continuous with respect to the Lebesgue measure then
\begin{equation}\label{eq : conv gradPsin to gradPsi}
 \nabla \Psi_{n}\longmapsto \nabla \Psi    \qquad  \mathcal{L}^2-a.e \text{  in  }  \mathbb{R}_+^2.
\end{equation}
 \end{lemma}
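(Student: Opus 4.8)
\emph{Plan of proof.} The strategy is to combine the precompactness supplied by Lemma~\ref{lem: maximizers are precompact} with a variational identification of every subsequential limit, which pins that limit down to $(P,\Psi)=\bar\calH(\sigma)$ and hence forces convergence of the full sequences; the two gradient convergences are then read off from the elementary fact that a locally uniformly convergent sequence of convex functions has $\mathcal{L}^2$-almost everywhere convergent gradients. (Helly's theorem enters through the monotonicity of the $\rho_n$, and the uniqueness of Theorem~\ref{thm : existence and uniqueness MA} is what allows the subsequential limits to be identified.)

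\emph{Step 1 (identification).} Let $(P_\infty,\Psi_\infty)$ be the uniform limit of some subsequence of $\{(P_n,\Psi_n)\}$, which exists by Lemma~\ref{lem: maximizers are precompact}. Passing the inequality defining $\calU$ in \eqref{eq:for calU} and the conjugacy relations in \eqref{eq:LegendrepsiP} to the uniform limit gives $(P_\infty,\Psi_\infty)\in\calU_0$. Next one checks $J[\sigma_n](P_n,\Psi_n)\to J[\sigma](P_\infty,\Psi_\infty)$: the measure term converges because $\sigma_n\to\sigma$ narrowly while $\Psi_n\to\Psi_\infty$ uniformly on the common support box $[0,R_0]^2$ (on which $\frac{\Upsilon}{2r_0^2}-\Omega\sqrt{\Upsilon}$ is bounded and continuous), and $j(P_n)\to j(P_\infty)$ by Lemma~\ref{le:conv J(P)}(ii), which applies since membership in $\calU_0$ forces, via \eqref{eq:subgradients}, that the $P_n$ are uniformly Lipschitz and satisfy \eqref{eq:conditionP}. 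The same narrow-convergence argument gives $J[\sigma_n](\Psi,P)\to J[\sigma](\Psi,P)$ for every fixed $(\Psi,P)\in\calU$, so letting $n\to\infty$ in $J[\sigma_n](P_n,\Psi_n)\ge J[\sigma_n](\Psi,P)$ shows that $(P_\infty,\Psi_\infty)$ maximizes $J[\sigma]$ over $\calU$. Proposition~\ref{Proposition MA}(iv) then gives $\Psi_\infty=\Psi$ on $\Delta$ and $P_\infty=P$ on $D_{\rho_0}$, and, since both pairs lie in $\calU_0$, the conjugacy relations bootstrap this to $P_\infty=\Psi_\infty^{\ast}=\Psi^{\ast}=P$ on all of $\mathcal{D}$ and $\Psi_\infty=P_\infty^{\ast}=P^{\ast}=\Psi$ on all of $\mathbb{R}_+^2$. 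As the subsequential limit is therefore unique, the whole sequences converge: $P_n\to P$ in $C(\bar{\mathcal{D}})$, $\Psi_n\to\Psi$ in $C([0,R_0]^2)$, and, using $\Psi_n=P_n^{\ast}$, $\Psi_n\to\Psi$ locally uniformly on $\mathbb{R}_+^2$.

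\emph{Step 2 (proof of (i) and (ii)).} With $P_n\to P$ uniformly, \eqref{eq:boundonheight12}, \eqref{eq:boundonheight2} and Lemma~\ref{le:boundonheight1} give \eqref{eq :h_n and h are bounded} with $M:=\max\{\sup_n M_{P_n},M_P\}$. By Proposition~\ref{Proposition MA}(iii) each $\rho_n$ is monotone nondecreasing and $0\le\rho_n\le M$, so by Helly's theorem any subsequence has a further subsequence along which $\rho_n\to g$ pointwise with $g$ monotone; Lemma~\ref{le:boundonheight1*1}(ii) gives $g(z)\in\Arg_P(\cdot,z)$, hence $g\ge\rho^-$, while the mass constraint $\int_{D_{\rho_n}}e\,ds\,dz=1$ and dominated convergence force $\int_0^H\!\!\int_0^{g(z)}e\,ds\,dz=1=\int_0^H\!\!\int_0^{\rho^-(z)}e\,ds\,dz$; since $e>0$ this gives $g=\rho^-$ a.e. Thus every Helly limit agrees a.e.\ with $\rho=\rho^-$, so $\rho_n(z)\to\rho(z)$ at every continuity point of $\rho$, hence $\mathcal{L}^1$-a.e., and $e(s)\chi_{D_{\rho_n}}\to e(s)\chi_{D_\rho}$ narrowly by dominated convergence (all supports lying in $[0,M]\times[0,H]$). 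For (ii): since $P_n\to P$ locally uniformly on the interior of $\mathcal{D}$ with all $P_n$ convex, the standard convex-analysis fact --- at each differentiability point of $P$ (an $\mathcal{L}^2$-full set) one has $\nabla P_n(p)\to\nabla P(p)$, the exceptional set being the Lebesgue-null union of the non-differentiability sets of $P$ and of the $P_n$ --- gives \eqref{eq : conv gradPn to gradP}; when in addition $\sigma_n,\sigma\ll\mathcal{L}^2$, the same reasoning applied to $\Psi_n=P_n^{\ast}\to P^{\ast}=\Psi$ on $\mathbb{R}_+^2$ gives \eqref{eq : conv gradPsin to gradPsi}. (Equivalently, (ii) can be run through the narrow precompactness of the optimal plans $(\id\times\nabla P_n)_{\#}(e(s)\chi_{D_{\rho_n}}\mathcal{L}^2)$ and their uniqueness in Theorem~\ref{thm : existence and uniqueness MA}.)

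\emph{Main obstacle.} The crux is Step~1: one must know that the limiting pair is not merely \emph{some} maximizer but \emph{the} solution attached to $\sigma$, which needs both the full strength of the uniqueness in Theorem~\ref{thm : existence and uniqueness MA}/Proposition~\ref{Proposition MA}(iv) \emph{and} the conjugacy bootstrap from ``unique on $D_{\rho_0}$ and on $\Delta$'' to ``unique on $\mathcal{D}$ and on $\mathbb{R}_+^2$'', together with the nontrivial continuity $j(P_n)\to j(P)$ supplied by Lemma~\ref{le:conv J(P)}(ii). A secondary subtlety is that the chosen representative $\calH(\sigma_n)$ --- the smallest minimiser of $\Pi_{P_n}(\cdot,z)$ --- need not converge at the (at most countably many) jump points of $\rho$, so there only the a.e.\ convergence, which is all that is needed for the narrow convergence of $e(s)\chi_{D_{\rho_n}}$, should be asserted.
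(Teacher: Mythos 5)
The paper does not include a proof of this lemma; just above the statement it refers to \cite{Sedjro}, saying only that the argument ``uses the Helly theorem, standard compactness results for optimal plans and uniqueness results in Theorem~\ref{thm : existence and uniqueness MA}.'' Your proof is correct and follows the same broad template --- precompactness from Lemma~\ref{lem: maximizers are precompact}, identification of every subsequential limit through the variational formulation ($j(P_n)\to j(P_\infty)$ from Lemma~\ref{le:conv J(P)}(ii) plus narrow convergence of $\sigma_n$) combined with the uniqueness of Proposition~\ref{Proposition MA}(iv), and Helly for the $\rho_n$ --- but you take a genuinely different last step in (ii): rather than running the conclusion through stability of optimal plans, you invoke the elementary fact that locally uniformly convergent convex functions have $\mathcal{L}^2$-a.e.\ convergent gradients, applied to $P_n\to P$ on $\mathcal{D}$ and to $\Psi_n=P_n^\ast\to P^\ast=\Psi$ on $\mathbb{R}_+^2$. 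A fringe benefit of your route is that \eqref{eq : conv gradPsin to gradPsi} holds without the extra absolute-continuity hypothesis on $\sigma_n,\sigma$ (that hypothesis is what one needs if one instead exploits $\nabla\Psi_{n\#}\sigma_n=e(s)\chi_{D_{\rho_n}}\calL^2$ and narrow compactness of the associated plans, which is presumably what the cited dissertation does).

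Two small remarks. First, the bootstrap in Step~1 from uniqueness on $D_{\rho_0}$ and $\Delta$ (which is all Proposition~\ref{Proposition MA}(iv) literally asserts) to uniqueness on all of $\mathcal{D}$ and $\mathbb{R}_+^2$ is a point worth making explicit, and it does work: once $\Psi_\infty=\Psi$ on $\Delta$, the $\calU_0$ conjugacy gives $P_\infty=\Psi_\infty^\ast=\Psi^\ast=P$ on $\mathcal{D}$ and hence $\Psi_\infty=P_\infty^\ast=P^\ast=\Psi$ on $\mathbb{R}_+^2$. One implicitly also uses here that the conjugate $\Psi^\ast$ of a $\calU_0$-pair is unchanged if the supremum is taken over $\mathbb{R}_+^2$ rather than over $\Delta$, since the $\calU$ constraint pins $\sup_{q\in\mathbb{R}_+^2}\left(\langle p,q\rangle-\Psi(q)\right)$ between $\Psi^\ast(p)$ and $P(p)$, which coincide in $\calU_0$; this is also what dispels any worry about the support set $\Delta$ varying with $\sigma_n$. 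Second, your caveat that the lemma's ``pointwise'' convergence of $\rho_n$ should really be read as convergence at the continuity points of $\rho$ (hence $\mathcal{L}^1$-a.e.) is well taken: the chosen representative $\calH(\sigma_n)$, the smallest minimiser of $\Pi_{P_n}(\cdot,z)$, can fail to converge to $\rho(z)$ at the countably many jump points, and a.e.\ convergence is all that is used downstream for the narrow convergence of $e(s)\chi_{D_{\rho_n}}$.
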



\section{ Continuity equation for the forced Axisymmetric  Model}
\label{sec:Continuity equation for the Toy Model}
Our goal in this section is to solve the continuity equation (\ref{eq00: Continuity equation unphysical problem}) corresponding to the forced axisymmetric flow discussed in the introduction, under two different sets of assumptions on the initial data. Throughout this section, we assume that $g,\theta_0$ are positive constants and $R_0 >1$.


\subsection{ Existence of a solution  for initial data that are absolutely continuous with respect to the Lebesgue measure}

In this section, $\Sigma$ denotes the set of all Borel probability measures  $\sigma$ on $ \mathbb{R}^2_{+}$  that are absolutely continuous with respect to the Lebesgue measure and whose support is contained in $[0,R_0]^2$
 
We consider the functions  $ F= F_t(r,z),  S= S_t(r,z)$   such that  $S,  F\in C^{1}((0,\infty)\times\mathbb{R}^2)$ and satisfy the following conditions:

\begin{itemize}
 \item  ($A1$)  $0\leq  F, \frac{g}{\theta_0} S \leq M  $  for some positive constant $ M $.
 \item ($A2$)  $ F=F(r)$ and $ S=S(z).$
 \item  ($A3$) $\frac{\partial  F}{\partial r},  \frac{\partial  S}{\partial z}>0. $
\end{itemize}

The next lemma is a well known result and can be found in \cite{Sedjro}.\\

\begin{lemma}\label{lem: convergence in distribution when measures abs continuous} 
We consider a family  $\sigma=\vrho\calL^2,\; \sigma^{n}=\vrho^{n}\calL^2\in \mathcal{P}( \mathbb{R}^2)\cap L^1( \mathbb{R}^2) $ $n\geq 1$ that is equi-integrable and let $\left\lbrace {\bf v}^{n}\right\rbrace_{n\geq 1}: \mathbb{R}^2\longmapsto \mathbb{R}^2$ be Borel measurable such that $|{\bf v}^{n}|\leq M_0\quad a.e$  where $M_0 $ is a positive constant.
 Assume $\left\lbrace \sigma^{n}\right\rbrace^{\infty}_{n=1}$ converges narrowly to $\sigma$ and ${\bf v}^{n}$ converges to ${\bf v}\quad a.e $. Then 
$${\bf v}^{n} \sigma^{n}\longrightarrow  {\bf v}\sigma \qquad \text{ in the sense of distributions.}$$
\end{lemma}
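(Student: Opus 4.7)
The plan is to test against an arbitrary $\varphi\in C_c(\mathbb{R}^2)$ with compact support $K$, and to show
\[
 \int_{\mathbb{R}^2}\varphi\,{\bf v}^n\,\vrho^n\,dx \longrightarrow \int_{\mathbb{R}^2}\varphi\,{\bf v}\,\vrho\,dx.
\]
I would split the difference as
\[
 \int \varphi({\bf v}^n-{\bf v})\vrho^n\,dx \;+\; \int \varphi\,{\bf v}\,(\vrho^n-\vrho)\,dx =: A_n + B_n
\]
and treat the two pieces by quite different arguments: $A_n$ by Egorov combined with the hypothesized equi-integrability, and $B_n$ by a Lusin-type regularisation of ${\bf v}$ together with the narrow convergence of $\sigma^n$.

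For $A_n$, fix $\varepsilon>0$. Since ${\bf v}^n\to {\bf v}$ a.e.\ on the compact set $K$, Egorov yields a measurable $E_\varepsilon\subset K$ with $|K\setminus E_\varepsilon|<\delta(\varepsilon)$ and ${\bf v}^n\to {\bf v}$ uniformly on $E_\varepsilon$. I would then estimate
\[
 |A_n| \le \|\varphi\|_\infty\sup_{E_\varepsilon}|{\bf v}^n-{\bf v}|\,\sigma^n(E_\varepsilon) + 2M_0\|\varphi\|_\infty\int_{K\setminus E_\varepsilon}\vrho^n\,dx.
\]
The first summand vanishes as $n\to\infty$ because $\sigma^n$ are probability measures. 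The second is made small uniformly in $n$ by choosing $\delta(\varepsilon)$ according to the equi-integrability of $\{\vrho^n\}$.

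The harder piece is $B_n$, because ${\bf v}$ is merely Borel measurable, so $\varphi\,{\bf v}$ is not a legal test function for narrow convergence. I would use Lusin's theorem applied to ${\bf v}$ relative to Lebesgue measure on $K$: for any $\eta>0$ there exists a continuous ${\bf v}_\eta:\mathbb{R}^2\to\mathbb{R}^2$ with $\|{\bf v}_\eta\|_\infty\le M_0$ and $|\{{\bf v}\ne {\bf v}_\eta\}\cap K|<\eta$. Then
\[
 B_n = \int \varphi\,({\bf v}-{\bf v}_\eta)\,\vrho^n\,dx \;+\; \int \varphi\,{\bf v}_\eta\,(\vrho^n-\vrho)\,dx \;+\; \int \varphi\,({\bf v}_\eta-{\bf v})\,\vrho\,dx.
\]
The middle term tends to $0$ as $n\to\infty$ by narrow convergence of $\sigma^n$ to $\sigma$, since $\varphi\,{\bf v}_\eta\in C_b(\mathbb{R}^2)$. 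The two outer terms are bounded by $2M_0\|\varphi\|_\infty$ times an integral over $\{{\bf v}\ne{\bf v}_\eta\}\cap K$ against $\vrho^n$ or $\vrho$, which is made arbitrarily small (uniformly in $n$) by choosing $\eta$ small, using once more the equi-integrability of the family $\{\vrho,\vrho^n\}_{n\ge 1}$.

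Combining the estimates, first sending $n\to\infty$ for fixed $\varepsilon,\eta$ and then $\varepsilon,\eta\to 0$ yields $A_n+B_n\to 0$. The main obstacle is exactly the $B_n$ piece: the discontinuity of ${\bf v}$ prevents direct use of narrow convergence, and the bridge is provided jointly by Lusin's theorem (to replace ${\bf v}$ by a continuous approximant) and equi-integrability (to control the error under both $\sigma^n$ and $\sigma$ uniformly). Without equi-integrability the Lusin step would fail, since the sets $\{{\bf v}\ne{\bf v}_\eta\}\cap K$ could carry non-negligible mass under $\sigma^n$.
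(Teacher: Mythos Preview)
The paper does not actually supply a proof of this lemma; it states that the result is well known and refers to \cite{Sedjro}. Hence there is no proof in the paper against which to compare yours directly.

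Your argument is correct. The decomposition into $A_n$ (oscillation of the vector field against a fixed density) and $B_n$ (fixed vector field against the varying densities) is the natural one, and the tools you invoke are exactly right: Egorov plus equi-integrability handles $A_n$, while Lusin plus equi-integrability plus narrow convergence handles $B_n$. One small remark on $B_n$: the third term $\int \varphi({\bf v}_\eta-{\bf v})\vrho\,dx$ does not strictly require equi-integrability of the whole family, only absolute continuity of the single measure $\sigma=\vrho\calL^2$; but of course a single $L^1$ function is trivially uniformly integrable, so your phrasing is harmless. Also, in the Lusin step the bound $\|{\bf v}_\eta\|_\infty\le M_0$ is not automatic from the bare Lusin theorem, but it is easily arranged by composing ${\bf v}_\eta$ with the radial retraction onto the ball of radius $M_0$, which preserves continuity and does not enlarge the set $\{{\bf v}\ne{\bf v}_\eta\}$ since $|{\bf v}|\le M_0$ a.e.
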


\begin{lemma}\label{lem unphisacal1: continuity equation on time step} 
Let $a, \;\tau>0$ and $L_a>1$. Let $\sigma_a=\vrho_a\calL^2\in \mathcal{P}_{[L_0]}$  be a Borel probability measure that is absolutely continuous with respect to the Lebesgue measure. Assume  that $\Psi(a,\cdot) :\mathbb{R}_{+}^{2}\longmapsto \mathbb{R}$ is convex and  that whenever $\nabla \Psi(a,\cdot)$  exists, it has values in $[0,\frac{1}{2r_0^2})\times[0, H]$.
Set 
\begin{equation}\label{eq unphysical1: define velocity v} 
 {\bf v}_t (q)= {\textstyle \left(\;2\sqrt{\Upsilon}  F_t\left(\frac{r_0}{\sqrt{1-2r_0^2\frac{\partial \Psi}{\partial \Upsilon}(a,q)}},\frac{\partial\Psi}{\partial Z} (a,q)\right)  ,\;  \frac{g}{\theta_0} S_t\left( \frac{r_0}{\sqrt{1-2r_0^2\frac{\partial \Psi}{\partial \Upsilon}(a,q)}},\frac{\partial\Psi}{\partial Z}(a,q) \right)  \right) }
\end{equation}
 with $q=(\Upsilon, Z)$. Assume that $(A_1)$,$(A_2) $and $(A_3)$ hold. Then, there exists  a family of measures $\sigma_t=\vrho_t\mathcal{L}^2\in \mathcal{P}(\mathbb{R}^{2})$  absolutely continuous with respect to Lebesgue such that
$$spt\sigma_t\subset[0, L_{a+\tau}]^2\qquad   \text{  for  } t\in[a,a+\tau] \text{  with  }  1< L_{a+\tau}:=L_a(1+ M\tau)^2 $$
satisfying the following:\\
(a) $\textstyle {\int}_{\mathbb{R}^2} \vrho_t^r dq\leq \textstyle {\int}_{\mathbb{R}^2} \vrho_a^{r}dq $  for any  $ r\geq 1 $ and $t\in [a, a+\tau]$.\\
(b) $t\longmapsto\sigma_t\in AC_{1}\left( (a,a+\tau);\mathcal{P}( \mathbb{R}^2)\right) $ and 

\begin{equation}\label{eq unphysical1:continuity equation with constant vector} 
\begin{cases}
 \frac{\partial\sigma}{\partial t}+ \div ({\sigma\bf v}_t)=0, \qquad  (t,q)\in (a,a+\tau)\times\mathbb{R}^2 \\
\sigma_{|t=a}=\bar\sigma_a
\end{cases}
\end{equation}
holds in the sense of distributions.\\
(c) $t\longmapsto\sigma_t$ is Lipschitz continuous with respect to the $1-$Wasserstein distance with Lipschitz constant less than $c_0:=M\sqrt{4L_0+1}$ in $[a,a+\tau] $ .
\end{lemma}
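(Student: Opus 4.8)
The plan is to construct the curve $t\mapsto\sigma_t$ directly as the push-forward of $\sigma_a$ along the flow of the (frozen-in-time-coefficient, but time-dependent-forcing) velocity field ${\bf v}_t$, and then verify each of the three claimed properties. First I would fix the convex potential $\Psi(a,\cdot)$ and note that, by hypothesis, $\nabla\Psi(a,\cdot)$ is defined $\calL^2$-a.e.\ with values in $\mathcal{D}=[0,1/2r_0^2)\times[0,H]$. Using $(A1)$, one sees that ${\bf v}_t$ is bounded: indeed $2\sqrt{\Upsilon}\le 2\sqrt{2L_a}$ on $\spt\sigma_a$ (and will stay controlled on the enlarged support), while $0\le F_t,\frac{g}{\theta_0}S_t\le M$, so $|{\bf v}_t(q)|\le M\sqrt{4L_0+1}$ on the relevant region; this already pins down the Lipschitz constant $c_0$ claimed in (c). Here $(A2)$ is used crucially: since $F$ depends only on $r$ and $S$ only on $z$, the first component of ${\bf v}_t$ depends only on $\partial_\Upsilon\Psi(a,q)$ and the second only on $\partial_Z\Psi(a,q)$, which both keeps the field Borel measurable (as a composition of Borel maps) and makes the monotonicity structure of $(A3)$ usable.

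Next I would run the Ambrosio--Gangbo style flow/continuity argument: since ${\bf v}_t$ is bounded and Borel, one builds the characteristic flow $X_t$ (for an a.e.\ initial point, via the superposition principle for the continuity equation, or via a standard regularization-and-compactness argument since $\sigma_a\ll\calL^2$) and sets $\sigma_t:=(X_t)_\#\sigma_a$. The support bound follows from $|X_t(q)-q|\le\int_a^t|{\bf v}_r|\,dr\le M\tau\cdot(\text{bound})$; tracking the $\sqrt{\Upsilon}$ growth of the first component gives the quadratic law $L_{a+\tau}=L_a(1+M\tau)^2$ (the factor $(1+M\tau)^2$ comes precisely because $\dot\Upsilon\le 2M\sqrt{\Upsilon}$ integrates to $\sqrt{\Upsilon_t}\le\sqrt{\Upsilon_a}+M\tau$). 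For (a): $\sigma_t=(X_t)_\#\sigma_a$ is absolutely continuous with density $\vrho_t$, and because ${\bf v}_t$ is \emph{divergence non-negative} in the appropriate sense — this is where $(A3)$ enters, $\div{\bf v}_t=2\sqrt{\Upsilon}\,\partial_r F\,\partial^2_{\Upsilon\Upsilon}\Psi/(\cdots)+\cdots\ge 0$ using convexity of $\Psi$ and $\partial_rF,\partial_zS>0$ — the flow is volume-expanding, so $\vrho_t(X_t(q))\le\vrho_a(q)$ along characteristics, whence $\int\vrho_t^r\le\int\vrho_a^r$ for every $r\ge1$ by the change of variables $q\mapsto X_t(q)$ and the pointwise density bound. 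For (b): that $t\mapsto\sigma_t$ solves $\partial_t\sigma+\div(\sigma{\bf v}_t)=0$ in the sense of distributions is immediate from $\sigma_t=(X_t)_\#\sigma_a$ and $\dot X_t={\bf v}_t(X_t)$, and the $AC_1$ bound with the stated constant follows from $W_1(\sigma_t,\sigma_s)\le\int_s^t\|{\bf v}_r\|_{L^\infty(\spt\sigma_r)}\,dr\le c_0|t-s|$, which also gives (c).

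The main obstacle I expect is the rigorous construction of the flow and the density bound (a) when $\Psi(a,\cdot)$ is merely convex, so ${\bf v}_t$ is only Borel and a.e.-defined, not Lipschitz: one cannot invoke classical Cauchy--Lipschitz. The clean way around this is to \emph{not} differentiate $\Psi$ twice at all, but instead to regularize — approximate $\Psi(a,\cdot)$ by smooth convex functions $\Psi^\e$ with $\nabla\Psi^\e\to\nabla\Psi$ a.e.\ (inf-convolution), approximate $\sigma_a$ by smooth equi-integrable densities, solve the smooth continuity equations, obtain uniform bounds on support, on $\int(\vrho_t^\e)^r$, and on the $W_1$-modulus, and pass to the limit using Lemma~\ref{lem: convergence in distribution when measures abs continuous} to get convergence of ${\bf v}_t^\e\sigma_t^\e\to{\bf v}_t\sigma_t$ in the sense of distributions. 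The inequality (a) survives the limit by lower semicontinuity of $\vrho\mapsto\int\vrho^r$ under narrow convergence, and equi-integrability of $\{\sigma_t^\e\}$ (propagated by the volume-expansion estimate) guarantees the limit is absolutely continuous. The $(A3)$ sign condition is what makes the regularized densities stay uniformly equi-integrable, so it is doing double duty: it gives (a) and it keeps the limiting measure from concentrating.
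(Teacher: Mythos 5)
Your proposed proof follows essentially the same route as the paper: first treat the smooth case by computing $\div\,{\bf v}_t \geq 0$ (using $(A2)$ to kill cross terms, $(A3)$ for the sign, and convexity of $\Psi$ for $\partial^2\Psi\geq 0$), so that $\det\nabla\phi_t\geq1$ yields the pointwise density bound and hence (a), while the $\sqrt\Upsilon$-growth and bounded speed give the support and Wasserstein estimates; then handle general convex $\Psi$ by regularizing (mollification in the paper, inf-convolution in yours — both fine since each preserves convexity and gives $\nabla\Psi^n\to\nabla\Psi$ a.e.), using Dunford--Pettis and Lemma~\ref{lem: convergence in distribution when measures abs continuous} to pass to the limit. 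The only superfluous step in your sketch is approximating $\sigma_a$ itself — the paper keeps $\sigma_a$ fixed and only regularizes $\Psi$, which suffices since $\sigma_a\ll\calL^2$ is already assumed.
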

\\
\begin{rmk}
Since $\Psi(a,\cdot)$ is convex, $\nabla \Psi(a,\cdot)$ exists $\calL^2$ $a.e$ so that ${\bf v}_t$ is defined $\calL^2$ $a.e $. As  $\sigma_a$  is absolutely continuous with respect to $\calL^2$,  ${\bf v}_t$ is defined $a.e$ $\sigma_a.$\\
\end{rmk}

\proof{} Set $U=(0,\infty)\times(0,\infty)$. We divide the proof into a several steps.\\
\textbf{Step 1}. We assume that $\Psi(a,\cdot)$ is  $C^2(U)$.\\
We observe that  the vector field ${\bf v}$ is smooth in $ (0,\infty)\times U$ and  define the associated flow by 
\begin{equation}\label{eq unphysical1: defining flow} 
 \dot{\phi}_t={\bf v}_t\circ \phi_t \text{ and } \phi_a= \id \qquad \text{   for   }  t\in (a,a+\tau). 
\end{equation}

We note that $\sigma_t=\phi_{t\#}\sigma_a$  solves the continuity equation (\ref{eq unphysical1:continuity equation with constant vector}). In view of $(A2)$,  A simple computation  gives

$$\div\ [{\bf v}_t]=\frac{1}{\sqrt{\Upsilon}}F+ \frac{r^3_0\sqrt{\Upsilon}}{\Big(\sqrt{1-2r_0^2\frac{\partial \Psi}{\partial \Upsilon}}\Big)^3} \frac{\partial^{2}\Psi}{\partial\Upsilon^2}\dfrac{\partial F}{\partial r}+ \frac{\partial^{2}\Psi}{\partial Z^2}\dfrac{\partial S}{\partial z}.$$
 Since $\Psi(a,\cdot)$ is convex, its second partial derivatives with respect to $\Upsilon$ and $Z$ are all non negative. This, combined with $(A3)$ leads to
             $$\div\ [{\bf v}_t]\geq 0,$$
which ensures that  $t\longmapsto \det(\nabla\phi_t)$ is non decreasing and so, 
\begin{equation}\label{eq unphysical1:det of the gradient >1} 
 \det(\nabla\phi_t)\geq \det(\nabla\phi_a)=1.
\end{equation}
\textbf{Step 2}. We use $(A1)$, the fact that $L_a>1$ and the definition of the flow in (\ref{eq unphysical1: defining flow}) to establish  a bound on the range of $\phi_t$ for $t$ fixed :

$$\phi_t([0,L_a]^2)\subset [0, L_a(1+ M (t-a))^2]^2. $$
Therefore, as $\sigma_t=\phi_{t\#}\sigma_a$ and $\phi_t$ is continuous,\\
$$ spt(\sigma_t) =\overline{\phi_t\left( spt(\sigma_a) \right)} \subset\phi_t([0,L_a]^2)\subset [0, L_a(1+ M(t-a))^2]^2.  $$
\textbf{Step 3}.  In view of (\ref{eq unphysical1:det of the gradient >1}),
 $\sigma_t=\phi_{t\#}\sigma_{a}$ is absolutely continuous with respect to  the Lebesgue measure $ \mathcal{L}^2$ and its density function $\vrho_{t}$ satisfies
\begin{equation}\label{eq2 unphysical1:det of the gradient >1} 
 \vrho_t\circ\phi_t=\frac{\vrho_a}{\det(\nabla\phi_t)}\leq \vrho_a.
\end{equation}
Using (\ref{eq2 unphysical1:det of the gradient >1}) and the fact that $\det [\nabla \phi_t]\geq 1$, we obtain 
$$\int_{\mathbb{R}^2}\vrho_t^r dq \leq  \int_{\mathbb{R}^2}  \vrho_a^r\circ \phi^{-1}\det [\nabla \phi]^{-1}\circ\phi^{-1} dq= \int_{\mathbb{R}^2}  \vrho_a^r dq     \qquad r\geq 1.$$
 This establishes (a). We easily check  $|{\bf v}|\leq M\sqrt{4L_a+1}= c_0$ and so, by [ Theorem 8.3.1 in\cite{ags:book}],
\begin{equation}\label{eq unphysical1:lip inequality} 
 W_1(\sigma_t, \sigma_s)\leq \int_s^t ||{\bf v}_{r}||_{L^1(\sigma_r )}dr \leq c_0 (t-s) \qquad  \text{for all } a\leq s \leq t \leq a+\tau
\end{equation}
 Therefore, $t\longrightarrow\sigma_t$ is $c_0$-Lipschitz continuous on $[a,a+\tau].$ Thus,
, \begin{equation}\label{eq unphysical1: equi bounded in W_1} 
   W_1(\bar\sigma_a, \sigma_t)\leq c_0(t-a)\leq c_0\tau
  \end{equation}
for all $t\in [a,a+\tau]$. As a consequence, $\left\lbrace \sigma_t\right\rbrace _{t\in [a,a+\tau]}$ is  bounded in the $1-$Wasserstein space.

\textbf{Step 4}.
We consider now the general case where $\Psi(a,\cdot)$ is not necessary smooth. We note that, as $\Psi(a,\cdot)$ is convex, $\Psi(a,\cdot)$  is locally Lipschitz and so, $\Psi(a,\cdot)\in W^{1,1}_{loc}(U)$. We set 
$$  U_n=\left\lbrace x\in U: dist(x,\partial U)> n^{-1} \right\rbrace, \quad \Psi^{n} (a,\cdot):= \Psi(a,\cdot)\ast j_n  \quad \text{on } U_n. $$
  Here, $\partial U$ denotes the boundary of $U$ and $j_n(x)=n^{2}j(nx)$ where $j$ is the standard mollifier. We obtain that $ \Psi^{n} (a,\cdot)$ converges  to $ \Psi (a,\cdot)$ in $ W^{1,1}_{loc}(U)$. This convergence guarantees that, up to a subsequence, $\nabla \Psi^n(a,\cdot)$ converges to $\nabla \Psi(a,\cdot)$,  $a.e \text{ in }U$.\\
Let us denote by ${\bf v}^n$ the velocity field when $\Psi$ is replaced by $\Psi^n$ in (\ref{eq unphysical1: define velocity v}). Without loss of generality,  we have that 
$${\bf v}_t^n\longrightarrow {\bf v}_t\qquad a.e.$$
Let $\sigma^{n}=\vrho^n\mathcal{L}^2$ denote the solution of (\ref{eq unphysical1:continuity equation with constant vector}) when ${\bf v}$ is replaced by ${\bf v}^n$. Then, $\sigma^{n}$ satisfies 
(\ref{eq unphysical1:lip inequality}) and the conditions (a), (b) and (c). We  obtain that the curve $t\longrightarrow\sigma_t^n$ is equi-Lipschitz on $[a,a+\tau]$ with respect to $W_1$ and (\ref{eq unphysical1: equi bounded in W_1}) ensures that it  is equi-bounded in $\mathcal{P}(\mathbb{R}^2)$ with respect to $W_1$. Therefore, there exists a subsequence that we still denote by $t\longrightarrow\sigma^{n}_t$ ( $n$ is independent of $t$) such that $\left\lbrace \sigma_t^n\right\rbrace^{\infty}_{n=1} $ converge narrowly to $\sigma_t$ for each $t\in[0,\tau]$.\\
Since the Wasserstein distance is lower semi-continuous with respect to narrow convergence and $\sigma_t^n$  satisfy (\ref{eq unphysical1:lip inequality}), $\sigma_t$ also satisfies (\ref{eq unphysical1:lip inequality}). That is, $\sigma_t$  is $c_0$-Lipschitz continuous on $(a,a+\tau)$, which proves (c). By condition (a),  $\left\lbrace \vrho_t^n\right\rbrace_{n=1}^{\infty}$ is equibounded in $L^r$, $r\geq 1$. Consequently, as $\left\lbrace\vrho_t^n\right\rbrace_{n=1}^{\infty}$ converges weakly* to $\sigma_t$, the Dunford-Pettis theorem guarantees that $\sigma_t$ is absolutely continuous with respect to Lebesgue , that is, $\sigma_t= \vrho_t\mathcal{L}^2$  . Also, as $\left\lbrace\vrho_t^n\right\rbrace_{n=1}^{\infty}$ satisfy the condition (a), the weakly lower semi-contnuity of the $L^r$ norms ensures that $\vrho_t$  satisfies the condition (a) as well. 

To obtain the continuity equation in (b), we only need to show that $\left\lbrace {\bf v}_t^n\sigma_t^n\right\rbrace_{n=1}^{\infty}$  converges to ${\bf v}_t\sigma_t$ in the sense of distributions for each $t$ fixed, as the fact that $\left\lbrace {\bf v}_t^n\sigma_t^ndt\right\rbrace_{n=1}^{\infty}$  converges to ${\bf v}_t\sigma_tdt$ in the sense of distributions will be obtained by a simple application of Lebesgue dominated convergence theorem.  We note that the inequality in (a) ensures that $\left\lbrace\vrho_t^n\right\rbrace_{n=1}^{\infty}$  is  equi- integrable. As $ {\bf v}^n_t$ converges to ${\bf v}_t$ a.e and $\sigma^n=\vrho^n \calL^2 $ narrowly to $\sigma =\vrho \calL^2$, we use Lemma \ref{lem: convergence in distribution when measures abs continuous} to obtain  the desired result. \endproof \\
If $\sigma\in \Sigma$ and  $(P,\Psi)=\bar\calH(\sigma)$ then   we  define 

\begin{equation}\label{eq2 unphysical1: velocity in dual space} 
 V_t[\sigma]:= {\textstyle \left( \;2\sqrt{\Upsilon}\left[  F_t\left(\frac{r_0}{\sqrt{1-2r_0^2\frac{\partial \Psi}{\partial \Upsilon}}},\frac{\partial\Psi}{\partial Z} \right) \right]  ,\; \frac{g}{\theta_0}  S_t\left(\frac{r_0}{\sqrt{1-2r_0^2\frac{\partial \Psi}{\partial \Upsilon}}},\frac{\partial\Psi}{\partial Z} \right)  \right). }
\end{equation}

\begin{thm}
 Assume that (A1),(A2) and (A3) hold. Assume  $1<L_0 < R_0$ and let $\bar\sigma_0=\bar\vrho_0\mathcal{L}^2\in\Sigma$ such that 
$$\spt (\bar\sigma_{0}) \subset  [0,L_0]^2.$$
Let $T>0$  such that $ L_0 e^{6MT}\leq R_0$. Then, there exists $\sigma_t=\vrho_t\mathcal{L}^2\in\Sigma$ satisfying :\\
(a) $\int_{\mathbb{R}^2} \vrho_t^r dq\leq \int_{\mathbb{R}^2} \bar\vrho_0 ^r dq$  for any  $r\geq 1. $\\
(b) $ t\longmapsto\sigma_t\in AC_{1}\left( (0,T);\mathcal{P}_2( \mathbb{R}^2)\right) $ and 

\begin{equation}\label{eq unphysical1:continuity equation} 
\begin{cases}
 \frac{\partial\sigma}{\partial t}+\div (\sigma V_t[\sigma])=0, \qquad  (0,T)\times \mathbb{R}^2 \\
\sigma_{|t=0}=\bar\sigma_0
\end{cases}
\end{equation}
holds in the sense of distributions.\\
(c) $t\longmapsto\sigma_t$ is Lipschitz continuous with Lipschitz constant less than $c_0=M\sqrt{4L_0+1}.$\\
\end{thm}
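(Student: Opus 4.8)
The plan is to implement the time-discretized ``frozen-velocity'' scheme of Ambrosio--Gangbo \cite{AmbrosioGangbo}. The velocity $V_t[\sigma]$ in \eqref{eq unphysical1:continuity equation} depends on $\sigma$ only through the dual Monge--Amp\`ere datum $\bar\calH(\sigma)=(P,\Psi)$, so on each short time interval we freeze it at the value determined by the left endpoint, solve the resulting \emph{linear} transport problem with Lemma~\ref{lem unphisacal1: continuity equation on time step}, and then send the time step to zero.

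First I would fix $n\geq 1$, put $\tau=T/n$ and $t_k=k\tau$, and build a curve $t\mapsto\sigma^n_t$ on $[0,T]$ by induction. Set $\sigma^n_0=\bar\sigma_0$. Given $\sigma^n_{t_k}\in\Sigma$ with $\spt(\sigma^n_{t_k})\subset[0,L_{t_k}]^2$ and $1<L_{t_k}\leq R_0$, let $(P^n_k,\Psi^n_k)=\bar\calH(\sigma^n_{t_k})$ via Theorem~\ref{thm : existence and uniqueness MA} (legitimate since $\sigma^n_{t_k}\in\calP_{[R_0]}$). Because $(P^n_k,\Psi^n_k)\in\calU_0$, the function $\Psi^n_k$ is convex and, by \eqref{eq:subgradients}, $\nabla\Psi^n_k$ takes values in $[0,1/(2r_0^2))\times[0,H]$; hence Lemma~\ref{lem unphisacal1: continuity equation on time step} applies with $a=t_k$ and $\Psi=\Psi^n_k$, yielding $\{\sigma^n_t\}_{t\in[t_k,t_{k+1}]}$ solving $\partial_t\sigma+\div(\sigma\mathbf{v}^n_t)=0$ with $\mathbf{v}^n_t$ the field \eqref{eq unphysical1: define velocity v} built from $\Psi^n_k$, together with $\spt(\sigma^n_{t_{k+1}})\subset[0,L_{t_{k+1}}]^2$ where $L_{t_{k+1}}=L_{t_k}(1+M\tau)^2$, the monotonicity $\int(\vrho^n_{t_{k+1}})^r\leq\int(\vrho^n_{t_k})^r$ for all $r\geq1$, and the per-step Lipschitz bound in $W_1$ (and, since $|\mathbf{v}^n_t|\leq c_0$ pointwise, also in $W_2$). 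Iterating the support estimate gives $\spt(\sigma^n_t)\subset[0,L_0(1+M\tau)^{2n}]^2\subset[0,L_0e^{2MT}]^2\subset[0,R_0]^2$ for every $t$ and $n$, the last inclusion by the hypothesis $L_0e^{6MT}\leq R_0$; this closes the induction. Concatenating the per-step bounds, $\int(\vrho^n_t)^r\leq\int\bar\vrho_0^r$ for all $t$ and $r\geq1$, and $t\mapsto\sigma^n_t$ is $c_0$-Lipschitz in $W_1$ and $W_2$ on $[0,T]$ with $c_0$ independent of $n$.

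Next I would extract a limit. The curves $\{\sigma^n_\cdot\}_n$ are equi-Lipschitz in $W_2$ with values in the set of Borel probability measures on $[0,R_0]^2$, which is compact for $W_2$, so by the Ascoli--Arzel\`a theorem for metric-space-valued curves (see \cite{ags:book}) a subsequence satisfies $\sigma^n_t\to\sigma_t$ narrowly for every $t\in[0,T]$, and $t\mapsto\sigma_t$ inherits the $W_1$- and $W_2$-Lipschitz bounds, which is (c) and (together with the uniform support bound) the $AC_1$ statement in (b). The uniform $L^r$ bounds make $\{\vrho^n_t\}_n$ equi-integrable, so Dunford--Pettis gives $\sigma_t=\vrho_t\calL^2\in\Sigma$, and weak lower semicontinuity of the $L^r$ norms gives (a). It remains to pass to the limit in $\int_0^T\!\!\int(\partial_t\phi\,d\sigma^n_t+\langle\nabla\phi,\mathbf{v}^n_t\rangle\,d\sigma^n_t)\,dt=0$ for $\phi\in C_c^\infty((0,T)\times\mathbb{R}^2)$, where, for $t\in[t_k,t_{k+1}]$, $\mathbf{v}^n_t$ is the field built from $\bar\calH(\sigma^n_{t_k})$. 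Since $W_1(\sigma^n_{t_k},\sigma^n_t)\leq c_0\tau\to0$, the left-endpoint measures also converge: $\sigma^n_{t_{k(n,t)}}\to\sigma_t$ narrowly, where $t_{k(n,t)}$ is the left endpoint of the subinterval containing $t$. By Lemma~\ref{lem: maximizers are precompact}, the uniqueness in Theorem~\ref{thm : existence and uniqueness MA}, and --- crucially, since all the measures involved are absolutely continuous --- Lemma~\ref{lem :conv of the gradients}(ii), one obtains $\nabla\Psi^n_{k(n,t)}\to\nabla\Psi_{\sigma_t}$ $\calL^2$-a.e.; as $F,S\in C^1$ and the map sending $\nabla\Psi$ to the field in \eqref{eq2 unphysical1: velocity in dual space} is continuous wherever $\nabla\Psi$ is defined, $\mathbf{v}^n_t\to V_t[\sigma_t]$ $\calL^2$-a.e., with $|\mathbf{v}^n_t|\leq c_0$. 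Lemma~\ref{lem: convergence in distribution when measures abs continuous} then gives $\mathbf{v}^n_t\sigma^n_t\to V_t[\sigma_t]\sigma_t$ in distributions for each $t$, and since everything is dominated uniformly in $t$, integrating in $t$ passes the weak formulation to the limit, yielding \eqref{eq unphysical1:continuity equation} in the sense of distributions; $\sigma_{|t=0}=\bar\sigma_0$ is immediate since $\sigma^n_0=\bar\sigma_0$ for all $n$.

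I expect the decisive step to be the passage to the limit in the nonlinear, nonlocal term $\mathbf{v}^n_t\sigma^n_t$: it rests on the continuity of $\sigma\mapsto V_t[\sigma]$, hence on the a.e.\ convergence $\nabla\Psi^n\to\nabla\Psi$, which itself depends on the uniqueness part of Theorem~\ref{thm : existence and uniqueness MA} packaged in Lemma~\ref{lem :conv of the gradients}. Two points demand care: it is the left-endpoint measures $\sigma^n_{t_k}$ (not $\sigma^n_t$) that feed the frozen velocity, so the uniform $W_1$-Lipschitz bound is needed to collapse $|t-t_k|\leq\tau$; and one must check at each step that the hypotheses of Lemma~\ref{lem unphisacal1: continuity equation on time step} --- convexity of $\Psi^n_k$ and $\nabla\Psi^n_k\in[0,1/(2r_0^2))\times[0,H]$ --- are inherited from $(P^n_k,\Psi^n_k)\in\calU_0$, together with the a priori support bound keeping every $\sigma^n_{t_k}$ inside $\calP_{[R_0]}$.
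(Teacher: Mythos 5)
Your proposal is correct and follows essentially the same route as the paper: build a piecewise-in-time scheme from $\bar\sigma_0$ by freezing $V_t[\cdot]$ at the left endpoint measure on each subinterval, invoke Lemma~\ref{lem unphisacal1: continuity equation on time step} step by step (using that $\bar\calH(\sigma^n_{t_k})\in\calU_0$ guarantees convexity of $\Psi^n_k$ and $\partial_\cdot\Psi^n_k\subset\calD$ via \eqref{eq:subgradients}), iterate the support bound $L_{k+1}\leq L_k(1+M\tau)^2$ to stay inside $[0,R_0]^2$ under $L_0e^{6MT}\leq R_0$, propagate the $L^r$ bounds and the $c_0$-Lipschitz bound, extract a narrow limit via equi-Lipschitz plus equi-boundedness, identify the limit density by Dunford--Pettis and lower semicontinuity, and pass to the limit in the nonlinear term by first collapsing $W_1(\sigma^n_t,\sigma^n_{t_k})\leq c_0\tau\to 0$, then using Lemma~\ref{lem :conv of the gradients}(ii) for $\calL^2$-a.e.\ convergence of $\nabla\Psi^n_k$ and Lemma~\ref{lem: convergence in distribution when measures abs continuous} for distributional convergence of $\mathbf{v}^n_t\sigma^n_t$. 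This is precisely the two-step structure (discrete construction, then compactness and limit identification) used in the paper; your support estimate $L_0(1+M\tau)^{2n}\leq L_0e^{2MT}$ is in fact slightly sharper than the paper's $L_0e^{6MT}$, but both are covered by the hypothesis.
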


\proof{} We divide the proof into 2 steps.\\
\textbf{Step 1} We fix a non negative integer $N$ and divide the interval $[0,T]$ into $N$ intervals with equal length $\tau=\frac{T}{N}.$ We first show that we can construct  a discrete function $\sigma_t^N=\vrho^{N}_t\calL^2$ satisfying the following properties:\\
(a1)  $\int_{\mathbb{R}^2} (\vrho^{N}_t)^r dq \leq \int_{\mathbb{R}^2} (\vrho_0 )^r dq $  for any  $r\geq 1$.\\
(b1) The ``delayed'' equation 

\begin{equation}\label{eq unphysical2:continuity equation} 
\begin{cases}
 \frac{\partial\sigma^{N}_t}{\partial t}+ \div ({\sigma^{N}_t\bf v}^N_t)=0, \qquad  (0,T)\times\mathbb{R}^2 \\
\sigma_{|t=0}=\bar\sigma_0
\end{cases}
\end{equation}
holds in the sense of distributions with ${\bf v}_t^N=V_{t}[\sigma^{N}_{[\frac{t}{\tau}]\tau}] $  for all $t\in [0,T).$\\
(c1) $t\longmapsto\sigma^{N}_t$ is Lipschitz continuous with respect to $W_1$ with Lipschitz constant less than $c_0.$\\

The construction of $\sigma_t^{N}$ goes as follows: we start off by setting $\sigma^{N}_0=\bar\sigma_0$ and  ${\bf v}_t^N=V_t[\bar\sigma_0]$ for $t\in[0, \tau]$. We use Lemma \ref{lem unphisacal1: continuity equation on time step}  to obtain a solution $\sigma_t^{N}$  on $[0, \tau]$. We repeat inductively the same process $(N-1)$ times by setting $\sigma_{i\tau}:=\sigma^N_{i\tau}$ and ${\bf v}^N_t=V_{t}[\sigma_{i\tau}]$ for $t\in [i\tau, (i-1) \tau]$ and using Lemma \ref{lem unphisacal1: continuity equation on time step} to obtain $\sigma_t^{N}$ on $ [i\tau, (i+1) \tau]$. In view of Lemma \ref{lem unphisacal1: continuity equation on time step}, we note that the process described above works as long as  $\left\lbrace \sigma_{i\tau}\right\rbrace_{1\leq i\leq N} $ stays absolutely continuous with respect to  the Lebesgue measure and is compactly supported in $\mathbb{R}^2_{+}.$ We next show that $\left\lbrace \sigma_{i\tau}\right\rbrace_{1\leq i\leq N} \subset \Sigma.$ We first observe that, by construction, Lemma \ref{lem unphisacal1: continuity equation on time step} guarantees that  $\left\lbrace \sigma_{i\tau}\right\rbrace_{1\leq i\leq N}$  are absolutely continuous with respect to the Lebesgue measure in $\mathbb{R}_{+}^2$. Define
 $$L_i:= \max \left( \sup\left\lbrace \Upsilon : ( \Upsilon, Z) \in  \spt (\sigma_{i\tau}) \right\rbrace;  \sup\left\lbrace Z: (\Upsilon, Z) \in  \spt (\sigma_{i\tau})  \right\rbrace \right)  $$
 for $1\leq i\leq N$.
By Lemma \ref{lem unphisacal1: continuity equation on time step},
$$ L_{i+1}\leq L_i( 1+M\tau)^2 \leq L_0( 1+M\tau)^{2(i+2)} < L_0( 1+M\tau)^{6N}= L_0( 1+M\frac{T}{N})^{6N}\leq  L_0 e^{6MT} .$$

With the constraint   $ L_0 e^{6MT}<R_0$ on $T$,  we  obtain that for any $0\leq i\leq N$,  $\spt (\sigma_{i\tau}) $ is contained in $ [0,R_0]^2.$ Therefore, the above construction of $\sigma_{t}^{N}$ is thoroughly justified.
 We  easily check that the conditions (a1) and (c1) follow from the condition (a) and (c) of Lemma \ref{lem unphisacal1: continuity equation on time step}.

\textbf{Step 2}
 By  (c1), $t\longmapsto \sigma^N_{t}$ are  equi-Lipschitz continuous on $[0,T]$ and, since $ \sigma^N_{0}=\bar\sigma_0$ for all  $N$,  they are equibounded in the $1$-Wasserstein space. Thus, there exists a subsequence of $t \longmapsto \sigma_t^{N}$, still denoted by $t \longmapsto \sigma_t^{N}$ ($N$ independent of $t$),  such that
$\left\lbrace \sigma_t^{N}\right\rbrace^{\infty}_{N=1}$  converges narrowly to $\sigma_t$  for any $t\in[0,T]$.\\
 In view of (a1), the theorem of Dunford-Pettis ensures that $\sigma_{t}=\rho_t\mathcal{L}^2.$ The weakly lower semi -continuity of the $L^r$- norms leads to (a).
We next show that $\sigma_t$ satisfies (\ref{eq unphysical1:continuity equation}). As $\left\lbrace \sigma_t^{N}\right\rbrace^{\infty}_{N=1}$  converges narrowly to $\sigma_t$, we only need to show that $\left\lbrace {\bf v}_t^N\sigma_t^N\right\rbrace_{N=1}^{\infty}$  converges to  ${V}_t[{\sigma}_t]\sigma_t$ in the sense of distributions for each $t$ fixed, since the fact that  $\left\lbrace {\bf v}_t^N\sigma_t^N dt\right\rbrace_{n=1}^{\infty}$  converges to ${\bf v}_t[\sigma_t]\sigma_t dt $   in the sense of distributions will be obtained by a simple application of Lebesgue dominated convergence theorem. By (c1),
$$ W_1\left(\sigma^{N}_t,\sigma_{[\frac{t}{\tau}]\tau}^{N} \right)\leq | t-[\frac{t}{\tau}]\tau| \leq \frac{T}{N}. $$ 

In light of this estimate,  $\left\lbrace \sigma_t^{N}\right\rbrace^{\infty}_{N=1}$  converges narrowly to $\sigma_t$ implies that 
 $\left\lbrace\sigma_{[\frac{t}{\tau}]\tau}^{N}\right\rbrace^{\infty}_{N=1}$  converges narrowly to $\sigma_t$. Thus, for each $t$ fixed, $\left\lbrace {\bf v}_t^{N}\right\rbrace^{\infty}_{i=1}$ converges to $V_t[\sigma_t]$ $\mathcal{L}^2-a.e$ by Lemma \ref{lem :conv of the gradients}(ii). We use Lemma \ref{lem: convergence in distribution when measures abs continuous}  to obtain that  $\left\lbrace {\bf v}_t^N\sigma_t^N\right\rbrace_{N=1}^{\infty}$  converges to  ${V}_t[{\sigma}_t]\sigma_t$ in the sense of distributions for each $t$ fixed. This concludes the proof. \endproof


\subsection{ Existence of a solution  for general initial data}

In this section, we impose  the following conditions on the forcing terms $F$ and $S : \mathbb{R}_{+}\times \mathbb{R}^2\longrightarrow \mathbb{R}$ :  

\begin{itemize}
 \item  $(B_1)$ $F$ and $S$ are continuous and bounded.
 \item $(B_2)$ $F\geq 0$ and $S \geq 0$.
\end{itemize}

Set $$\mathbb{F}_t= \left(F_t\circ {\bf d}, \frac{g}{\theta_0}S_t\circ {\bf d} \right) \quad \hbox{with}\quad {\bf d}(s,z)=\left( \frac{r_0}{\sqrt{1-2r_0^2s}}, z\right).$$

As $ F$ and $S$ are bounded, there exists  a constant $ C_0$ (independent of $t$) such that

 $$\sqrt{R_0}||\mathbb{F}_t||_{\infty}\leq  C_0$$
for all $t\geq 0.$
To any function $G=(G_1,G_2)$, we associate $\calA [G]$ defined by  $$\calA [G](\Upsilon, Z)=\left( \sqrt{\Upsilon} G_1(\Upsilon,Z), G_2(\Upsilon,Z)\right). $$

Note that if $G\in C([0,R_0]^2; \mathbb{R}^2)$ then 

$$\calA [G]\in C([0,R_0]^2; \mathbb{R}^2) \qquad\hbox{ with } \qquad \|\calA [G]\|_{\infty} \leq \sqrt{R_0} \| G\|_{\infty}.$$
For $\sigma\in \mathcal{P}_{[R_0]}$, if $\rho=\calH(\sigma)$    and $(P,\Psi)=\bar\calH(\sigma)$ then we define 
$$L_t[\sigma](G):=\int_{\mathbb{R}^2}\langle \calA [G]\circ \nabla P, \;\mathbb{F}_t \rangle e(s)\chi_{D_{\rho}}(s,z)dsdz $$
for all $G$ Borel measurable. Note that if $G\in L^1(\sigma,\mathbb{R}^2)$ then 

\begin{equation}\label{eq: control calA(G)}
\calA [G]\in L^1(\sigma; \mathbb{R}^2)\qquad\hbox{ with } \qquad \|\calA [G]\|_{L^1(\sigma; \mathbb{R}^2)} \leq \sqrt{R_0} \| G\|_{L^1(\sigma; \mathbb{R}^2)}.
\end{equation}

\begin{rmk}\label{rmk:positive functional} For $G\in L^1(\sigma,\mathbb{R}^2)$ such that $G_1\geq 0,G_2\geq 0\quad \sigma\; a.e$ we have that $L_t[\sigma](G)\geq 0$.\\
\end{rmk}

\begin{lemma}\label{lem: Riesz for V[sigma]} 
 Fix $t>0$. There exists $V_t:\mathcal{P}_{[R_0]}\longrightarrow \displaystyle{\bigcup_{\sigma\in\mathcal{P}_{[R_0]}}}L^{\infty}(\sigma; \mathbb{R}^2)$ such that for any $\sigma\in \mathcal{P}_{[R_0]}$, $V_t[\sigma]:=\left(V^1_t[\sigma],V^2_t[\sigma] \right) \in L^{\infty}(\sigma; \mathbb{R}^2)$ and
\begin{equation}\label{eq: Riesz for V[sigma] 1} 
 L_t[\sigma](G)=\int_{\mathbb{R}^2}\langle V_t[\sigma],G\rangle d\sigma
\end{equation}
for all $G\in L^1(\sigma,\mathbb{R}^2)$. Moreover, 
\begin{equation}\label{eq: Riesz for V[sigma] 2} 
|| V_t[\sigma]||_{L^{\infty}(\sigma; \mathbb{R}^2)}\leq C_0
\end{equation}
and 
\begin{equation}\label{eq: Riesz for V[sigma] 3} 
V^1_t[\sigma] \geq 0 \qquad V^2_t[\sigma]\geq 0 \quad \sigma \text{ a.e. }
\end{equation}
\end{lemma}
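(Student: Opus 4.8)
The plan is to realize $V_t[\sigma]$ via the Riesz representation theorem applied to the linear functional $G \mapsto L_t[\sigma](G)$ on $L^1(\sigma;\mathbb{R}^2)$. First I would check that this functional is well-defined and bounded. Given $G \in L^1(\sigma;\mathbb{R}^2)$, the estimate \eqref{eq: control calA(G)} gives $\calA[G] \in L^1(\sigma;\mathbb{R}^2)$. The key observation is that $\nabla P$ pushes $e(s)\chi_{D_\rho}(s,z)\calL^2$ forward to $\sigma$: this is exactly the content of Theorem \ref{thm : existence and uniqueness MA} together with Proposition \ref{Proposition MA}(iii) (recall $(P,\Psi) = \bar\calH(\sigma)$ and $\rho = \calH(\sigma)$ are the variational maximizer/minimizer, and $\sigma$ here may fail to be absolutely continuous but $\nabla P_\#(e(s)\chi_{D_\rho}\calL^2) = \sigma$ always holds). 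Hence by the change-of-variables formula for pushforwards,
\[
L_t[\sigma](G) = \int_{\mathbb{R}^2}\langle \calA[G]\circ\nabla P,\ \mathbb{F}_t\rangle\, e(s)\chi_{D_\rho}(s,z)\,dsdz = \int_{\mathbb{R}^2}\big\langle (\calA[G])(q),\ (\mathbb{F}_t\circ\nabla\Psi)(q)\big\rangle\,\sigma(dq),
\]
where I use that $\nabla\Psi$ is the $\sigma$-a.e.\ inverse of $\nabla P$ (Proposition \ref{Proposition MA}(iii), or more directly the fact that under the pushforward $\mathbb{F}_t\circ\nabla P$ becomes a well-defined $\sigma$-measurable function obtained by representing it on the target; one does not even need invertibility, only that $\mathbb{F}_t\circ\nabla P$ is a function on the source whose pushforward data identifies a $\sigma$-class). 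In any case $L_t[\sigma](G)$ depends only on the $\sigma$-equivalence class of $G$, since $\calA[G]$ does.

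Next I would establish boundedness. Since $F,S$ are bounded, $\|\mathbb{F}_t\|_\infty \le \|\mathbb{F}_t\|_\infty$ and $\sqrt{R_0}\|\mathbb{F}_t\|_\infty \le C_0$ by the hypothesis stated just before the lemma; combined with $\|\calA[G]\|_{L^1(\sigma)} \le \sqrt{R_0}\|G\|_{L^1(\sigma)}$ we get
\[
|L_t[\sigma](G)| \le \|\mathbb{F}_t\|_\infty\,\|\calA[G]\|_{L^1(\sigma;\mathbb{R}^2)} \le \sqrt{R_0}\,\|\mathbb{F}_t\|_\infty\,\|G\|_{L^1(\sigma;\mathbb{R}^2)} \le C_0\,\|G\|_{L^1(\sigma;\mathbb{R}^2)}.
\]
Thus $G \mapsto L_t[\sigma](G)$ is a bounded linear functional on $L^1(\sigma;\mathbb{R}^2) = L^1(\sigma)\otimes\mathbb{R}^2$, and by the Riesz representation theorem (duality $(L^1)^* = L^\infty$, applied componentwise) there is a unique $V_t[\sigma] = (V_t^1[\sigma],V_t^2[\sigma]) \in L^\infty(\sigma;\mathbb{R}^2)$ with $L_t[\sigma](G) = \int_{\mathbb{R}^2}\langle V_t[\sigma],G\rangle\,d\sigma$, and the operator norm identity gives $\|V_t[\sigma]\|_{L^\infty(\sigma;\mathbb{R}^2)} = \|L_t[\sigma]\| \le C_0$, which is \eqref{eq: Riesz for V[sigma] 2}. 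Setting $V_t$ to be the resulting map $\sigma \mapsto V_t[\sigma]$ gives the claimed $V_t : \mathcal{P}_{[R_0]} \to \bigcup_\sigma L^\infty(\sigma;\mathbb{R}^2)$ and \eqref{eq: Riesz for V[sigma] 1}.

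Finally, for the sign condition \eqref{eq: Riesz for V[sigma] 3}, I would use Remark \ref{rmk:positive functional}: for any $G \in L^1(\sigma;\mathbb{R}^2)$ with $G_1 \ge 0$, $G_2 \ge 0$ $\sigma$-a.e., we have $L_t[\sigma](G) \ge 0$, hence $\int\langle V_t[\sigma],G\rangle\,d\sigma \ge 0$. Testing against $G = (\chi_A,0)$ for arbitrary Borel $A$ yields $\int_A V_t^1[\sigma]\,d\sigma \ge 0$ for all $A$, so $V_t^1[\sigma] \ge 0$ $\sigma$-a.e.; testing against $G = (0,\chi_A)$ gives $V_t^2[\sigma] \ge 0$ $\sigma$-a.e. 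The main subtlety — the only place requiring care — is the measurability/well-definedness in the first step: one must be sure that $L_t[\sigma]$ genuinely descends to the $\sigma$-classes and that no absolute-continuity of $\sigma$ is needed, which is why I lean on the pushforward identity $\nabla P_\#(e(s)\chi_{D_\rho}\calL^2)=\sigma$ from Proposition \ref{Proposition MA}(iii) rather than on any inversion formula. Everything else is a routine application of Riesz duality, and I would note explicitly that the constant $C_0$ is independent of $\sigma$ (it depends only on $R_0$ and $\sup_{s\ge 0}\|\mathbb{F}_s\|_\infty$), which is what makes this lemma usable for the later compactness arguments.
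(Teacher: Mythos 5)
Your proposal is correct and follows essentially the same route as the paper: bound $\lvert L_t[\sigma](G)\rvert\le C_0\lVert G\rVert_{L^1(\sigma;\mathbb{R}^2)}$ via \eqref{eq: control calA(G)} together with the pushforward identity $\nabla P_{\#}\bigl(e(s)\chi_{D_\rho}\mathcal{L}^2\bigr)=\sigma$, invoke Riesz duality $(L^1)^*=L^\infty$, and obtain the sign condition by testing Remark~\ref{rmk:positive functional} against nonnegative indicator functions. The only cosmetic difference is in the last step, where you test against $G=(\chi_A,0)$ for arbitrary Borel $A$ while the paper takes the specific set $A=\{V^1_t[\sigma]<0\}$ and argues by contradiction; these are the same argument.
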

\proof{} Let $\sigma\in \mathcal{P}_{[R_0]}$ and set  $\rho=\calH(\sigma)$    and $(P,\Psi)=\bar\calH(\sigma)$. We use (\ref{eq: control calA(G)}) to obtain
$$
\left\vert L_t[\sigma] (G) \right\vert \leq \|\mathbb{F}_t\|_{\infty}\int_{\mathbb{R}^2}\left\vert \calA [G]\right\vert \circ \nabla P\;\; e(s)\chi_{D_{\rho}}dsdz =\|\mathbb{F}_t\|_{\infty}\sqrt{R_0} \|G\|_{L^1(\sigma;\mathbb{R}^2)} \leq  C_0\|G\|_{L^1(\sigma;\mathbb{R}^2)}.\\
     $$
The Riesz representation theorem provides $V_t[\sigma]$ such that (\ref{eq: Riesz for V[sigma] 1}) and (\ref{eq: Riesz for V[sigma] 2}) holds.
Note that

\begin{equation}\label{eq: positive functional}
L_t[\sigma](G_1,0)=\int_{\mathbb{R}^2}\langle V_t[\sigma],(G_1,0)\rangle d\sigma=\int_{V^1_t[\sigma]\geq 0} V^1_t[\sigma]G_1 d\sigma +\int_{V^1_t[\sigma]< 0} V^1_t[\sigma]G_1 d\sigma. 
\end{equation}

Choose  $G_1=\chi_{\left\lbrace V^1_t[\sigma]< 0\right\rbrace }\geq 0$ so that $  L_t[\sigma](G_1,0) \geq 0$ (see remark  \ref{rmk:positive functional} ). If, in addition, we have $\sigma(V^1_t[\sigma]<0)>0$ then we use (\ref{eq: positive functional}) to obtain
$$ 0\leq L_t[\sigma](G_1,0)=\int_{\left\lbrace V^1_t[\sigma]< 0\right\rbrace } V^1_t[\sigma]d\sigma<0. $$
 Therefore,  $\sigma(V^1_t[\sigma]<0)=0$ so that $V^1_t[\sigma]\geq 0$ $\sigma$ a.e. A similar argument shows that $V^2_t[\sigma]\geq 0$ $\sigma$ a.e.
\endproof

\begin{rmk}
 If $\sigma\in \mathcal{P}_{[R_0]}$,  $\rho=\calH(\sigma)$ and $(P,\Psi)=\bar\calH(\sigma)$ then, for $G\in L^1(\sigma,\mathbb{R}^2)$ and any $t,r\geq 0$,
$$ 
 L_t[\sigma](G)- L_r[\sigma](G)= \int_{\mathbb{R}^2}\langle \calA [G]\circ \nabla P, \;\mathbb{F}_t -\mathbb{F}_r \rangle e(s)\chi_{D_h}(s,z)dsdz.
$$
And so, in view of Lemma \ref{lem: Riesz for V[sigma]},
 
\begin{equation}\label{eq control variation in time of velocity}
\begin{aligned}
\left\vert \int_{\mathbb{R}^2}\langle V_t[\sigma]-V_r[\sigma] ,G\rangle d\sigma \right\vert &=\left\vert L_t[\sigma](G)- L_r[\sigma](G) \right\vert\\
 &\leq \|\calA [G]\|_{\infty}　　\sup_{p\in \Delta_{r_0}}\left\vert\mathbb{F}_t(p) -\mathbb{F}_r (p)\right\vert　\int_{\mathbb{R}^2} e(s)\chi_{D_{\rho}}(s,z)dsdz \\
&\leq \sqrt{R_0}\|G\|_{\infty}　　\sup_{p\in \Delta_{r_0}}\left\vert\mathbb{F}_t(p) -\mathbb{F}_r (p)\right\vert.
\end{aligned}
\end{equation}
\end{rmk}

\begin{lemma}\label{lem: general measure: conv of momentum} 
Let $t\geq 0$ and $V_t$ as provided by Lemma \ref{lem: Riesz for V[sigma]}. Let $\left\lbrace\sigma_n \right\rbrace^{\infty}_{n=1}$ and $\sigma$ be elements of $\mathcal{P}_{[R_0]}(\mathbb{R}^2)$ such that $\left\lbrace\sigma_n \right\rbrace^{\infty}_{n=1}$ converges narrowly to $\sigma$. Then $\left\lbrace V_t[\sigma_n]\sigma_n\right\rbrace_{n}$ converges to $V_t[\sigma]\sigma$ in the sense of distributions.\\
\end{lemma}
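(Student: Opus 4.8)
The plan is to test against an arbitrary $\phi \in C_c(\mathbb{R}^2;\mathbb{R}^2)$ and show $\int \langle V_t[\sigma_n],\phi\rangle\,d\sigma_n \to \int \langle V_t[\sigma],\phi\rangle\,d\sigma$. The idea is to pull everything back to the fixed reference side of the Monge–Ampère change of variables, where the measures $e(s)\chi_{D_{\rho_n}}\calL^2$ behave well, and then exploit the a.e.\ convergence of the gradients $\nabla P_n \to \nabla P$ from Lemma \ref{lem :conv of the gradients}(ii) together with the pointwise convergence $\rho_n \to \rho$ from Lemma \ref{lem :conv of the gradients}(i). Write $\rho_n = \calH(\sigma_n)$, $\rho=\calH(\sigma)$, $(P_n,\Psi_n)=\bar\calH(\sigma_n)$, $(P,\Psi)=\bar\calH(\sigma)$. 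By the definition of $L_t[\sigma]$ and the Riesz identity \eqref{eq: Riesz for V[sigma] 1}, for any $G \in C_c(\mathbb{R}^2;\mathbb{R}^2)$,
\begin{equation}\label{eq: pullback identity}
\int_{\mathbb{R}^2} \langle V_t[\sigma_n],G\rangle\,d\sigma_n = \int_{\mathbb{R}^2} \langle \calA[G]\circ\nabla P_n,\;\mathbb{F}_t\rangle\, e(s)\chi_{D_{\rho_n}}(s,z)\,ds\,dz,
\end{equation}
and similarly with $n$ removed. So it suffices to prove that the right-hand side of \eqref{eq: pullback identity} converges to its limit analogue.

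First I would treat the integrand. Since $\sigma_n$ has support in $[0,R_0]^2$, the convexity of $P_n$ together with \eqref{eq:subgradients} forces $\nabla P_n$ to take values in $\bar\Delta \subset [0,R_0]^2$ wherever it is defined; hence $\calA[G]\circ\nabla P_n$ is uniformly bounded by $\sqrt{R_0}\|G\|_\infty$ and $\mathbb{F}_t$ is bounded by $(B_1)$. Next, by Lemma \ref{lem :conv of the gradients}(ii), $\nabla P_n \to \nabla P$ $\calL^2$-a.e.\ on $\mathcal{D}$, and $\calA[G]$ is continuous, so $\calA[G]\circ\nabla P_n \to \calA[G]\circ\nabla P$ $\calL^2$-a.e.\ on $\mathcal{D}$. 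For the domain factor, Lemma \ref{lem :conv of the gradients}(i) gives $\rho_n \to \rho$ pointwise on $[0,H]$, hence $\chi_{D_{\rho_n}}(s,z) \to \chi_{D_\rho}(s,z)$ for every $(s,z)$ with $s \neq \rho(z)$, i.e.\ $\calL^2$-a.e.; moreover \eqref{eq :h_n and h are bounded} (applied after passing to a uniformly convergent subsequence of $\{P_n\}$, which exists by Lemma \ref{lem: maximizers are precompact}) furnishes $M<\frac{1}{2r_0^2}$ with $0\le \rho_n,\rho < M$, so all the functions $e(s)\chi_{D_{\rho_n}}$ are supported in the fixed compact set $[0,M]\times[0,H]$ on which $e$ is bounded. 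Thus the integrand in \eqref{eq: pullback identity} is dominated by a fixed $L^1$ function and converges $\calL^2$-a.e., and the dominated convergence theorem yields the claim along that subsequence.

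Finally I would remove the subsequence. The limit $\int_{\mathbb{R}^2} \langle \calA[G]\circ\nabla P,\mathbb{F}_t\rangle e(s)\chi_{D_\rho}\,ds\,dz = \int \langle V_t[\sigma],G\rangle\,d\sigma$ is independent of the chosen subsequence — this is where the \emph{uniqueness} in Theorem \ref{thm : existence and uniqueness MA} is essential, since it guarantees $\rho$ and $(P,\Psi)$ (and hence $V_t[\sigma]$) are uniquely determined by $\sigma$, and Lemma \ref{lem :conv of the gradients} applies to \emph{every} narrowly convergent subsequence. A standard subsequence-of-a-subsequence argument then gives convergence of the full sequence. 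Since $C_c(\mathbb{R}^2;\mathbb{R}^2)$ is dense in the relevant sense and the masses $|V_t[\sigma_n]|$ are uniformly bounded by $C_0$ (Lemma \ref{lem: Riesz for V[sigma]}), testing against $C_c$ vector fields is enough to conclude $V_t[\sigma_n]\sigma_n \to V_t[\sigma]\sigma$ in the sense of distributions. The main obstacle is the a.e.\ convergence of $\nabla P_n$ and the precompactness of $\{P_n\}$ — but these are exactly the content of Lemmas \ref{lem: maximizers are precompact} and \ref{lem :conv of the gradients}, so the argument reduces to bookkeeping with dominated convergence on the reference domain.
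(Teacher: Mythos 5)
Your proof is correct and proceeds along essentially the same lines as the paper's: pull back via the Riesz identity to the reference domain, extract a uniformly convergent subsequence of $\{P_n\}$ via Lemma \ref{lem: maximizers are precompact}, invoke Lemma \ref{lem :conv of the gradients} for the a.e.\ convergence of $\nabla P_n$ and the pointwise convergence of $\rho_n$, pass to the limit, and finally upgrade to the full sequence by uniqueness of the limit. The only variation is in the limit-passing step: the paper invokes Lemma \ref{lem: convergence in distribution when measures abs continuous} (narrow convergence of equi-integrable densities against a.e.\ convergent bounded fields), whereas you observe that $\chi_{D_{\rho_n}}\to\chi_{D_\rho}$ $\calL^2$-a.e.\ off the null graph $\{s=\rho(z)\}$ and apply dominated convergence directly — a slightly more hands-on but equivalent route.
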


\proof{}
Let $(P_n,\Psi_n)=\bar\calH(\sigma_n)$, $(P,\Psi)=\bar\calH(\sigma)$, $\rho_n=\calH(\sigma_n)$ and $\rho=\calH(\sigma)$. As $\left\lbrace\sigma_n \right\rbrace^{\infty}_{n=1}$ converges narrowly to $\sigma$, Lemma \ref{lem: maximizers are precompact}  ensures that there exists a subsequence $\left\lbrace n_k\right\rbrace^{\infty}_{k=1}  $ of integers such that $\left\lbrace P_{n_k}\right\rbrace^{\infty}_{k=1} $  converges uniformly. Hence, by Lemma \ref{lem :conv of the gradients} (i), $ 0\leq \rho, \rho_{n_k}\leq M_0<\frac{1}{2r_0^2}$  for some constant $M_0$ and so $\left\lbrace   e(s)\chi_{D_{\rho_{n_k}}} \right\rbrace^{\infty}_{k=1} $  is equi-integrable. Lemma \ref{lem :conv of the gradients}(ii) ensures that $\left\lbrace \nabla P_{n_k} \right\rbrace^{\infty}_{k=1} $ converges  a.e to $\nabla P$.  Let $G\in C([0,R_0]^2) $; then $\calA[G]$ is continuous on $[0,R_0]^2$ and  $\langle \calA[G]\circ \nabla P_{n_k}; \mathbb{F}_t\rangle$  converges a.e to $\langle \calA[G]\circ \nabla P ; \mathbb{F}_t\rangle$.  Moreover, as 
$ G $ is  bounded function, $\calA[G]$ is bounded. In addition, since  $\mathbb{F}$ is bounded,  there exists $M >0$ such that $\left\vert\langle \calA[G]\circ \nabla P_{n_k}; \mathbb{F}_t\rangle \right\vert \leq M$ for all $k\geq 1$ and $t>0$. Using Lemma \ref{lem: convergence in distribution when measures abs continuous}, we obtain that 
$$\lim_{k\rightarrow\infty}\int \langle \calA[G]\circ \nabla P_{n_k}; \mathbb{F}\rangle e(s)\chi_{D_{\rho_{n_k}}} (s,z)dsdz= \int \langle \calA[G]\circ \nabla P; \mathbb{F}\rangle e(s)\chi_{D_{\rho}} (s,z)ds dz.$$
This, in the light of (\ref{eq: Riesz for V[sigma] 1}), becomes 
\begin{equation}\label{eq: continuity of sigmaV}
\lim_{k\rightarrow\infty}\int_{\mathbb{R}^2}\langle G, V_t[\sigma_{n_k}]\rangle d\sigma_{n_k}= \int_{\mathbb{R}^2}\langle G,V_t[\sigma]\rangle d\sigma.
\end{equation}

As $\left\lbrace\sigma_n \right\rbrace^{\infty}_{n=1}$ and $\sigma\in\mathcal{P}_{[R_0]}(\mathbb{R}^2)$, (\ref{eq: continuity of sigmaV}) still holds for  $G\in C_c(\mathbb{R}^2)$. Thus, we obtain that  $\left\lbrace V_t[\sigma_{n_k}]\sigma_{n_k}\right\rbrace_{k}$ converges to $V_t[\sigma]\sigma$ in the sense of distributions. Since  the limit $V_t[\sigma]\sigma$  is independent of the  extracted subsequence of $\left\lbrace V_t[\sigma_n]\sigma_n\right\rbrace_{n}$, we conclude that the whole sequence $\left\lbrace V_t[\sigma_n]\sigma_n\right\rbrace_{n}$ converges narrowly to $V_t[\sigma]\sigma$. \endproof

\begin{definition}\label{eq1 unphysical1: def in the weak dual sense}
Let $T>0$ and $t\longrightarrow\sigma_t$ be an absolutely continuous path in $\mathcal{P}_{[R_0]}$. 

Let $(P(t,\cdot),\Psi(t,\cdot))= \bar\calH(\sigma_t)$ and $\rho_t= \calH(\sigma_t)$.  We say that 
$$ \dot{\sigma_t}=\chi^{\bar\calH}_{\calH}(\sigma_t)\qquad\qquad t\in (0,T)$$
 in the weak dual sense if  
\begin{equation}\label{eq: equation of continuity  weak dual sense} 
 \int^T_0 dt\int _{\mathbb{R}^2}\frac{\partial \varphi}{\partial t}\circ \nabla P(t,\cdot) e(s)\chi_{D_{\rho_t}}ds dz + \int^T_0 L_t[\sigma](\nabla\varphi) dt =0
\end{equation}
 for all $\varphi\in C^1((0,T)\times \mathbb{R}^2).$\\
\end{definition}

\begin{rmk}\label{rmk: continuity quation in terms of V[sigma]}
 In view of lemma \ref{lem: Riesz for V[sigma]}, (\ref{eq: equation of continuity  weak dual sense}) becomes
$$\int^T_0 dt\int _{\mathbb{R}^2}\frac{\partial \varphi}{\partial t} +   \langle \nabla\varphi, V_t[\sigma]\rangle d\sigma_t dt =0$$
 for all $\varphi\in C^1((0,T)\times \mathbb{R}^2).$
That is,
 \begin{equation}\label{eq: continuity quation in terms of V[sigma]}
 \frac{\partial\sigma}{\partial t}+ div (\sigma V_t[\sigma])=0, \qquad  (0,T)\times \mathbb{R}^2
\end{equation}
holds  in the distributional sense.\\

\end{rmk}
\vspace{2pt}
\begin{rmk}
 We want to emphasize that we have defined the velocity field via a Riesz representation when $\sigma$ is  not absolutely continuous with respect to the Lebesgue measure.\\
\end{rmk}

\begin{thm}
Assume  that $\bar F$, $\bar S$ satisfy $(B_1)$ and $(B_2)$, and that $0<  L_0 < R_0$. Let  $\bar \sigma_0\in  \mathcal{P}_{[L_0]}$ and $T>0$  such that $L_0+ C_0T < R_0$. Then, there exists $\sigma_t: [0,T]\longmapsto \left( \mathcal{P}(\mathbb{R}^2),W_1\right) $ $C_0$-Lipschitz continuous  such that $\spt(\sigma_t)\subset [0,R_0]^2$ and

\begin{equation}\label{eq unphysical1:continuity equation 2} 
\begin{cases}
 \dot{\sigma_t}=\chi^{\bar\calH}_{\calH}(\sigma_t)\\
\sigma_{|t=0}=\bar\sigma_0.
\end{cases}
\end{equation}
\end{thm}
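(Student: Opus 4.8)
The plan is to run the time-discretization scheme of Ambrosio--Gangbo \cite{AmbrosioGangbo}, organized around the velocity operator $\sigma\mapsto V_t[\sigma]$ supplied by Lemma \ref{lem: Riesz for V[sigma]} and its narrow stability (Lemma \ref{lem: general measure: conv of momentum}). Recall that $V_t[\sigma]$ is defined for every $\sigma\in\calP_{[R_0]}$, has nonnegative components and satisfies $\|V_t[\sigma]\|_{L^\infty(\sigma)}\le C_0$; the new feature with respect to the previous subsection is that, when $\sigma$ is not absolutely continuous, $V_t[\sigma]$ is defined only $\sigma$-almost everywhere, through the Riesz representation, so every argument that previously used absolute continuity of $\sigma$ (in particular the $L^r$ estimates) has to be recast on the primal side, where the measure $e(s)\chi_{D_\rho}(s,z)\calL^2$ is always absolutely continuous.

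First I would establish a one-step building block: given $a\ge 0$, $\tau>0$, $L_a>0$ and $\bar\sigma_a\in\calP_{[L_a]}$, produce a curve $t\mapsto\sigma_t$ on $[a,a+\tau]$ with $\sigma_a=\bar\sigma_a$, $\spt\sigma_t\subset[0,L_a+C_0\tau]^2$, which is $C_0$-Lipschitz for $W_1$ and solves $\partial_t\sigma+\div(\sigma V_t[\bar\sigma_a])=0$ in the sense of distributions. Writing $(P,\Psi)=\bar\calH(\bar\sigma_a)$, the field $V_t[\bar\sigma_a]$ is essentially a function of the a.e.\ defined gradient $\nabla\Psi$, but since $\bar\sigma_a$ may charge the singular set of $\Psi$ one cannot flow it directly; instead one mollifies the convex potential $P$ on the primal side to smooth convex $P^n$ with $\nabla P^n\to\nabla P$ a.e., exactly as in Step 4 of the proof of Lemma \ref{lem unphisacal1: continuity equation on time step}, forms the corresponding smooth fields $V^n_t$ (uniformly bounded by $C_0$ thanks to $(B_1)$ and $\sqrt{R_0}\|\mathbb{F}_t\|_\infty\le C_0$) together with their flows, and pushes $\bar\sigma_a$ forward along them. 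The resulting $\sigma^n_t$ solve continuity equations with the regularized velocities, their supports grow by at most $C_0\tau$, and by [Theorem 8.3.1 in \cite{ags:book}] the curves $t\mapsto\sigma^n_t$ are equi-$C_0$-Lipschitz for $W_1$; extracting a narrow limit $\sigma_t$ for each $t$ and using Lemma \ref{lem: convergence in distribution when measures abs continuous} (applied to the equi-integrable primal densities $e(s)\chi_{D_{\rho}}$) together with the Riesz characterization of $V_t[\bar\sigma_a]$ from Lemma \ref{lem: Riesz for V[sigma]}, I would pass the momentum to the limit and recover the continuity equation for $\sigma_t$.

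Next I would assemble the scheme: fix $N$, set $\tau=T/N$, $\sigma^N_0=\bar\sigma_0$, and build $\sigma^N_t$ on the intervals $[i\tau,(i+1)\tau]$ by the one-step construction applied to $\bar\sigma_a:=\sigma^N_{i\tau}$ with the delayed velocity ${\bf v}^N_t=V_t[\sigma^N_{[t/\tau]\tau}]$. Since each step enlarges the support by at most $C_0\tau$, one gets $\spt\sigma^N_{i\tau}\subset[0,L_0+C_0 i\tau]^2\subset[0,L_0+C_0T]^2\subset[0,R_0]^2$ by the hypothesis $L_0+C_0T<R_0$, so the iterates never leave $\calP_{[R_0]}$ and $\bar\calH,\calH$ stay applicable at every step; moreover $t\mapsto\sigma^N_t$ is $C_0$-Lipschitz for $W_1$ with $\sigma^N_0=\bar\sigma_0$, hence the family is equi-bounded in $(\mathcal{P}(\mathbb{R}^2),W_1)$. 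By compactness of equi-Lipschitz, equi-bounded curves I extract a subsequence with $\sigma^N_t\to\sigma_t$ narrowly for every $t\in[0,T]$; the limit is $C_0$-Lipschitz and supported in $[0,R_0]^2$. From $W_1(\sigma^N_t,\sigma^N_{[t/\tau]\tau})\le C_0\tau\to 0$ we get $\sigma^N_{[t/\tau]\tau}\to\sigma_t$ narrowly for each $t$, so Lemma \ref{lem: general measure: conv of momentum} gives $V_t[\sigma^N_{[t/\tau]\tau}]\sigma^N_t\to V_t[\sigma_t]\sigma_t$ in the sense of distributions; a dominated-convergence argument in $t$ (the momenta being uniformly bounded by $C_0$) then lets us pass to the limit in the weak formulation of $\partial_t\sigma^N+\div(\sigma^N{\bf v}^N_t)=0$ and obtain $\partial_t\sigma+\div(\sigma V_t[\sigma])=0$ distributionally. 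By Remark \ref{rmk: continuity quation in terms of V[sigma]} this is precisely $\dot\sigma_t=\chi^{\bar\calH}_{\calH}(\sigma_t)$ in the weak dual sense, with $\sigma_{|t=0}=\bar\sigma_0$, and the $W_1$-Lipschitz bound supplies the required regularity.

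The hard part is the one-step building block: for a genuinely singular $\bar\sigma_a$ the velocity $V_t[\bar\sigma_a]$ is not given by an everywhere-defined expression, so the regularization has to be carried out in the primal variables (where $e(s)\chi_{D_\rho}\calL^2$ is absolutely continuous and $\nabla P^n\to\nabla P$ holds Lebesgue-a.e.), and one must check carefully that the limiting curve solves the continuity equation with the genuine, Riesz-defined velocity rather than with some Lebesgue-a.e.\ surrogate built out of $\nabla\Psi$. The other point requiring attention, though essentially routine, is that the iterates must be kept inside $[0,R_0]^2$ throughout the discretization; here the support grows only linearly, in contrast with the quadratic growth of the absolutely continuous case, which is exactly why the condition $L_0+C_0T<R_0$ suffices.
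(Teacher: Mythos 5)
Your overall skeleton matches the paper: time-discretize, extract a narrowly convergent subsequence of equi-Lipschitz curves, and pass the momenta to the limit using Lemma \ref{lem: general measure: conv of momentum}, with the remark linking the weak-dual formulation to the distributional continuity equation. The support bookkeeping and the argument for Step~2 are also essentially the paper's.

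The gap is in your one-step building block. You propose, on each interval $[i\tau,(i+1)\tau]$, to \emph{solve} a continuity equation with the delayed velocity $V_t[\sigma^N_{i\tau}]$ -- mollifying $P$ on the primal side, building smooth velocities $V^n_t$ out of $\nabla\Psi^n$, flowing $\sigma^N_{i\tau}$ along them, and recovering the equation for the Riesz velocity in the limit. This cannot work as written, for two reasons. First, $V_t[\sigma^N_{i\tau}]$ provided by Lemma \ref{lem: Riesz for V[sigma]} lives only in $L^\infty(\sigma^N_{i\tau};\mathbb{R}^2)$; it has no canonical extension to $\spt(\sigma_t)$ once $\sigma_t\ne\sigma^N_{i\tau}$, so the very statement $\partial_t\sigma+\div(\sigma V_t[\sigma^N_{i\tau}])=0$ on the interval is ill-posed. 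Second, the regularization you suggest produces $\nabla P^n\to\nabla P$ $\calL^2$-a.e.\ on $D_\rho$ -- fine on the primal side -- but the velocity on the $q$-side requires $\nabla\Psi^n\to\nabla\Psi$ \emph{$\sigma^N_{i\tau}$-a.e.}, and a genuinely singular $\sigma^N_{i\tau}$ may well charge the set where $\Psi$ is not differentiable; the Step~4 argument of Lemma \ref{lem unphisacal1: continuity equation on time step} is used in the paper only when the reference measure is absolutely continuous, precisely so that a.e.\ convergence of $\nabla\Psi^n$ suffices.

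The paper avoids both problems by not solving any PDE on the subinterval at all: following Ambrosio--Gangbo, it \emph{defines} $\sigma^N_t$ as the pushforward of $\sigma^N_{i\tau}$ under the single affine map $q\mapsto q+(t-i\tau)V_{i\tau}[\sigma^N_{i\tau}](q)$, and defines the momentum as the pushforward of $V_{i\tau}[\sigma^N_{i\tau}]\,\sigma^N_{i\tau}$ under the same map, as in \eqref{eq unphysical1:transport of the momentum for discret solution}. This uses $V_{i\tau}[\sigma^N_{i\tau}]$ only $\sigma^N_{i\tau}$-a.e., which is exactly what the Riesz representation gives, and the pair automatically satisfies the continuity equation by differentiating under the integral. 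The price is that the discrete velocity is frozen in time at $V_{i\tau}[\cdot]$ rather than $V_t[\cdot]$, which is why the paper's Step~2 needs the extra estimate \eqref{eq control variation in time of velocity} (continuity of $\mathbb{F}_t$ in $t$) as one of the three terms in the error splitting \eqref{eq: control first term in break up measure -vel -time 1}--\eqref{eq2: convergence on the sequence in third term}. Your proposal omits the first of those splittings (the comparison between the straight-line-transported momentum and the grid-point momentum), which is the term that is estimated directly from \eqref{eq unphysical1:transport of the momentum for discret solution}; keeping it would require you to adopt the straight-line construction in the first place. So the missing idea is precisely the replacement of a PDE solve by a pushforward of the momentum measure.
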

\proof{}  We divide the proof into 2 steps.\\
\textbf{Step 1} (Construction of a discrete solution.)\\
We choose $V=(V^1,V^2)$ as provided by Lemma \ref{lem: Riesz for V[sigma]}. For any $\sigma \in \calP_{[R_0]}$, by redefining $V_t[\sigma]$ on a $\sigma$ negligible subset of $\mathbb{R}^2$, we may  assume without loss of generality that $V^1_t[\sigma] , V^2_t[\sigma]\geq 0$ and  $|V_t[\sigma]|\leq C_0$  on $\mathbb{R}^2$ all $t\geq 0$.
Let $N$ be a positive integer. Following a scheme devised by Ambrosio  and Gangbo (see \cite{AmbrosioGangbo}, \cite{Sedjro}), we easily build a sequence of measure-valued curve $t\longmapsto\sigma_{t}^{N}$ satisfying:\\
(a) $t\longmapsto \sigma^N_t$ is Lipschitz continuous  with the Lipschitz constant less than or equal to $C_0$.\\
(b) $W_1(\sigma_t^N,\sigma_0)\leq C_0 T$.\\
(c) $spt(\sigma^N_t)\subset [0, R_0]^2$ for $t\in[0,T]$.\\
(d) $t\longmapsto \sigma^N_t$ satisfies that
$$\begin{cases}
 \frac{\partial\sigma^N}{\partial t}+ \div (\sigma_t^N V_t[\sigma_t^N])=0, \qquad (0,T)\times\mathbb{R}^2 \\
\sigma^{N}_{|t=0}=\bar\sigma_0
\end{cases}
$$

 holds in the distributional sense with 
\begin{equation}\label{eq unphysical1:transport of the momentum for discret solution} 
 V_t[\sigma_t^N]\sigma_t^{N}=\left(\id+(t-[\frac{t}{\tau}]\tau) {V}_{[\frac{t}{\tau}]\tau} [\sigma^{N}_{[\frac{t}{\tau}]\tau}] \right)_{\#}\left( {V}_{[\frac{t}{\tau}]\tau} [\sigma^{N}_{[\frac{t}{\tau}]\tau}]\sigma^{N}_{[\frac{t}{\tau}]\tau}\right).  
\end{equation}

\textbf{Step 2} (Construction of a Lipschitz continuous solution.)\\
In view of (a) and (b), there exists a subsequence  of $\left\lbrace \sigma_t^N\right\rbrace_{N} $ still denoted by $\left\lbrace \sigma_t^N\right\rbrace_{N} $ such that $\left\lbrace \sigma_t^N\right\rbrace_{N} $ converges narrowly to  some $\sigma_t$ for each $t$ fixed independently of $N$. We next show that $\sigma_t$ solves (\ref{eq unphysical1:continuity equation 2}) or equivalently (\ref{eq: continuity quation in terms of V[sigma]}), in view of Remark \ref{rmk: continuity quation in terms of V[sigma]}. For this purpose, we only have to show that, up to some subsequence, $\left\lbrace V_t[\sigma^N_t]\sigma^N_t dt\right\rbrace_{N}$ converges in the sense of distributions to $V_t[\sigma_t]\sigma_tdt$.\\

Let $\phi\in C_c^1((0,T)\times\mathbb{R}^2,\mathbb{R}^2)$. We use (\ref{eq unphysical1:transport of the momentum for discret solution}) to obtain
\begin{equation}\label{eq: control first term in break up measure -vel -time 1} 
 \begin{aligned}
&\left\vert\int_0^T dt \int_{\mathbb{R}^2}\langle \phi(t,x);\;  V_t[\sigma_t^N] \rangle d\sigma_t^N - \int_0^T dt \int_{\mathbb{R}^2}\langle \phi(t,x);\;V_{\tau[\frac{t}{\tau}]}[\sigma_{\tau[\frac{t}{\tau}]}^N] \rangle d\sigma_{\tau[\frac{t}{\tau}]}^N \right\vert\\
&\leq \sum^N_{k=1}\int^{k\tau}_{(k-1)\tau} dt\int_{\mathbb{R}^2} \left\vert \phi\left(t, x+(t-\tau [\frac{t}{\tau}]) V_{\tau[\frac{t}{\tau}]}[\sigma_{\tau[\frac{t}{\tau}]}^N]\right) -\phi(t,x)\right\vert
|V_{\tau[\frac{t}{\tau}]}[\sigma_{\tau[\frac{t}{\tau}]}^N]| d\sigma^N_{\tau [\frac{t}{\tau}]}.\\
 \end{aligned}
\end{equation}
By using  the facts that $\sigma^N_{\tau [\frac{t}{\tau}]}$ has  its support in $[0,R_0]^2$,
$\phi$ is Lipschitz on $[0,T]\times[0,R_0]^2$ and $V$ is bounded, the right hand side of (\ref{eq: control first term in break up measure -vel -time 1}) can be estimated by
\begin{equation}\label{eq: control first term in break up measure -vel -time 2} 
\sum^N_{k=1}\int^{k\tau}_{(k-1)\tau} dt\int_{[0,R_0]^2}  Lip (\phi)\left\vert t-\tau [\frac{t}{\tau}]\right\vert | V_{\tau[\frac{t}{\tau}]}[\sigma_{\tau[\frac{t}{\tau}]}^N]|^2 d\sigma^N_{\tau [\frac{t}{\tau}]}\leq
 C_0^2  Lip(\phi)\frac{T^2}{2N}.
\end{equation}
 We combine (\ref{eq: control first term in break up measure -vel -time 1}) and (\ref{eq: control first term in break up measure -vel -time 2}) to obtain 
\begin{equation}\label{eq: control first term in break up measure -vel -time 3} 
\limsup_{N\rightarrow\infty}\left\vert\int_0^T dt \int_{\mathbb{R}^2}\langle \phi(t,x);\; V_t[\sigma_t^N] \rangle d\sigma_t^N - \int_0^T dt \int_{\mathbb{R}^2}\langle \phi(t,x);\; V_{\tau[\frac{t}{\tau}]}[\sigma_{\tau[\frac{t}{\tau}]}^N]\rangle d\sigma_{\tau[\frac{t}{\tau}]}^N \right\vert =0.
\end{equation}
 By (\ref{eq control variation in time of velocity}),
\begin{equation}\label{eq0: bound on the sequence in second term} 
\left\vert  \int_0^T dt \int_{\mathbb{R}^{2}}\langle \phi(t,x);\; V_{\tau[\frac{t}{\tau}]}[\sigma_{\tau[\frac{t}{\tau}]}^N]  - V_t[\sigma_{\tau[\frac{t}{\tau}]}^N] \rangle d\sigma_{\tau[\frac{t}{\tau}]}^N \right\vert \leq 
\sqrt{R_0}\|\phi\|_{\infty}\int_0^T \sup_{p\in\bar\Delta_{r_0}} \left\vert\mathbb{F}_{\tau\left[ \frac{t}{\tau}\right] }(p)- \mathbb{F}_t(p)\right\vert dt.
\end{equation}

As $|t-\tau\left[\frac{t}{\tau} \right] |\leq \tau=\frac{T}{N}$ and $\mathbb{F}$ is continuous and bounded on $[0,T]\times \Delta_{r_0}$, we use the Lebesgue dominated convergence theorem to obtain that
\begin{equation}\label{eq1: bound on the sequence in second term} 
\begin{aligned}
\limsup_{N\rightarrow\infty}&\left\vert  \int_0^T dt \int_{\mathbb{R}^{2}}\langle \phi(t,x);\; V_{\tau[\frac{t}{\tau}]}[\sigma_{\tau[\frac{t}{\tau}]}^N]  - V_t[\sigma_{\tau[\frac{t}{\tau}]}^N] \rangle d\sigma_{\tau[\frac{t}{\tau}]}^N \right\vert \\
&\leq \sqrt{R_0}\|\phi\|_{\infty}\limsup_{N\rightarrow\infty}\int_0^T \sup_{p\in\Delta_{r_0}} \left\vert\mathbb{F}_{\tau\left[ \frac{t}{\tau}\right] }(p)- \mathbb{F}_t(p)\right\vert dt=0.\\
\end{aligned}
\end{equation}
We note that 
\begin{equation}\label{eq0: bound on the sequence in third term} 
 \left\vert \int_{\mathbb{R}^2}\langle \phi(t,x), V_t[\sigma_{\tau[\frac{t}{\tau}]}^N]\rangle d\sigma_{\tau[\frac{t}{\tau}]}^N \right\vert\leq C_0 \|\phi\|_{\infty}.
\end{equation}

Using (a), we have
$$W_1\left( \sigma_{\tau\left[\frac{t}{\tau}\right]}^N, \sigma_t^N\right)\leq C_0\left\vert t-\tau\left[\frac{t}{\tau} \right] \right\vert  \leq\frac{C_0 T}{N}.$$

 And so,  as $N$ goes to $\infty$, $\left\lbrace \sigma_{\tau\left[\frac{t}{\tau} \right]}^N\right\rbrace_{N} $ converges narrowly to $\sigma_t$ and   lemma \ref{lem: general measure: conv of momentum}  ensures that 
\begin{equation}\label{eq1: convergence on the sequence in third term} 
\int_{\mathbb{R}^2} \langle \phi(t,x);\; V_t[\sigma_{\tau[\frac{t}{\tau}]}^N] \rangle d\sigma_{\tau[\frac{t}{\tau}]}^N \text{ converges a.e to  } \int_{\mathbb{R}^2} \langle \phi(t,x);\; V_t[\sigma_t]\rangle d\sigma_t.
\end{equation}

 We combine (\ref{eq0: bound on the sequence in third term}) and (\ref{eq1: convergence on the sequence in third term}) and use the Lebesgue dominated convergence theorem to obtain that 
\begin{equation}\label{eq2: convergence on the sequence in third term} 
 \int_0^T dt \int_{\mathbb{R}^2}\langle\phi(t,x) V_t[\sigma_{\tau[\frac{t}{\tau}]}^N] \rangle d\sigma_{\tau[\frac{t}{\tau}]}^N  \text{ converges to } \int_0^T dt \int_{\mathbb{R}^2}\langle\phi(t,x), V_t[\sigma_t]\rangle d\sigma_t.
\end{equation}

 In view of (\ref{eq: control first term in break up measure -vel -time 3}) (\ref{eq1: bound on the sequence in second term}) and (\ref{eq2: convergence on the sequence in third term}), we have 
$$\limsup_{N\rightarrow\infty}\left\vert\int_0^T dt \int_{\mathbb{R}^2}\langle\phi(t,x), {V_t[\sigma^N_t]}_t^N\rangle d\sigma_t^N - \int_0^T dt \int_{\mathbb{R}^2}\langle \phi(t,x), V_t[\sigma_t]\rangle d\sigma_t \right\vert =0. $$
As $\phi$ is arbitrary, we obtain that $\left\lbrace V_t[\sigma^N_t]\sigma^N_t dt\right\rbrace_{N}$ converges in the sense of distributions to $V_t[\sigma_t]\sigma_tdt$, which concludes the proof. \endproof

\section*{Acknowledgments}

The Authors would like to thank Prof. Wilfrid Gangbo for his help and comments. They also want to express their gratitude to the anonymous referees who help improve considerably the presentation of this paper. This work  
was mostly completed while MS was a student in the PhD program at the  Georgia Institute of Technology to which he is grateful  for the excellent working conditions. MS would like to gratefully acknowledge RA support from the  
National Science Foundation through grants DMS--0901070 of Prof. W.  Gangbo,  DMS--0807406 of Prof. R. Pan  and DMS--0856485 of Prof. A. Swiech.

\end{document}